\documentclass[12pt,leqno]{article}
\usepackage{hyperref}
\usepackage{soul}
\usepackage[doc]{optional}
\usepackage{tikz}
\usepackage{xcolor}
\definecolor{labelkey}{rgb}{0,0.08,0.45}
\definecolor{rekey}{rgb}{0,0.6,0.0}
\definecolor{Brown}{rgb}{0.45,0.0,0.05}
\usepackage{exscale,relsize}
\usepackage{amsmath}
\usepackage{amsfonts}
\usepackage{amssymb}
\usepackage{calc}
\usepackage{theorem}
\usepackage{pifont}      
\usepackage{graphicx}
\oddsidemargin -0.1cm
\textwidth  16.5cm
\topmargin  0.0cm
\headheight 0.0cm
\textheight 21.0cm
\parindent  4mm
\parskip    10pt
\tolerance  3000
\DeclareMathOperator{\weakstarly}{\stackrel{\mathrm{w*}}{\rightharpoondown}}

\newcommand{\wk}{\ensuremath{\operatorname{w*}}}

\newcommand{\scal}[2]{\langle{{#1},{#2}}\rangle}

\newcommand{\RR}{\ensuremath{\mathbb R}}

\newcommand{\RX}{\ensuremath{\,\left]-\infty,+\infty\right]}}
\newcommand{\RXX}{\ensuremath{\,\left[-\infty,+\infty\right]}}

\newcommand{\NN}{\ensuremath{\mathbb N}}

\newcommand{\thalb}{\ensuremath{\tfrac{1}{2}}}

\newcommand{\menge}[2]{\big\{{#1} \mid {#2}\big\}}

\newcommand{\To}{\ensuremath{\rightrightarrows}}

\newcommand{\spand}{\operatorname{span}}

\newcommand{\dom}{\ensuremath{\operatorname{dom}}}

\newcommand{\gra}{\ensuremath{\operatorname{gra}}}

\newcommand{\inte}{\ensuremath{\operatorname{int}}}

\newcommand{\ran}{\ensuremath{\operatorname{ran}}}

\newcommand{\conv}{\ensuremath{\operatorname{conv}}}

\renewcommand{\phi}{\ensuremath{\varphi}}

\newcommand{\J}{\ensuremath{\mathbf{J}}}
\newcommand{\qede}{\hspace*{\fill}$\Diamond$\medskip}

\newtheorem{theorem}{Theorem}[section]
\newtheorem{lemma}[theorem]{Lemma}
\newtheorem{fact}[theorem]{Fact}
\newtheorem{corollary}[theorem]{Corollary}
\newtheorem{proposition}[theorem]{Proposition}
\newtheorem{definition}[theorem]{Definition}

\theoremstyle{plain}{\theorembodyfont{\rmfamily}
}
\theoremstyle{plain}{\theorembodyfont{\rmfamily}
}
\theoremstyle{plain}{\theorembodyfont{\rmfamily}
}
\theoremstyle{plain}{\theorembodyfont{\rmfamily}
\newtheorem{example}[theorem]{Example}}
\theoremstyle{plain}{\theorembodyfont{\rmfamily}
\newtheorem{remark}[theorem]{Remark}}

\theoremstyle{plain}{\theorembodyfont{\rmfamily}
}


\begin{document}


\title{\sffamily{Construction of  pathological maximally monotone operators on non-reflexive Banach spaces
 }}

\author{
Heinz H.\ Bauschke\thanks{Mathematics, Irving K.\ Barber School,
University of British Columbia, Kelowna, B.C. V1V 1V7, Canada.
E-mail: \texttt{heinz.bauschke@ubc.ca}.},\; Jonathan M.
Borwein\thanks{CARMA, University of Newcastle, Newcastle, New South
Wales 2308, Australia. E-mail:
\texttt{jonathan.borwein@newcastle.edu.au}.  Distinguished Professor
King Abdulaziz University, Jeddah. },\;
 Xianfu
Wang\thanks{Mathematics, Irving K.\ Barber School, University of British Columbia,
Kelowna, B.C. V1V 1V7, Canada. E-mail:
\texttt{shawn.wang@ubc.ca}.},\;
and Liangjin\
Yao\thanks{Mathematics, Irving K.\ Barber School, University of British Columbia,
Kelowna, B.C. V1V 1V7, Canada.
E-mail:  \texttt{ljinyao@interchange.ubc.ca}.}}

\date{August 6,   2011}
\maketitle

\begin{abstract} \noindent
In this paper,  we construct maximally monotone
 operators that are not of Gossez's dense-type (D) in many nonreflexive
 spaces. Many of these operators also fail to possess the
 Br{\o}nsted-Rockafellar (BR) property.
  Using  these operators,
we show that the partial inf-convolution of two BC--functions will
not always be a BC--function. This provides a negative answer to a
challenging question posed by Stephen Simons. Among other consequences, we
deduce that every Banach space which contains an isomorphic copy of
the James space $\J$ or its dual $\J^*$,  or  $c_0$ or
its dual  $\ell^1$, admits a non type (D) operator.
\end{abstract}

\noindent {\bfseries 2010 Mathematics Subject Classification:}\\
{Primary  47A06, 47H05;
Secondary
47B65, 47N10,
 90C25}
\noindent

\noindent {\bfseries Keywords:} Adjoint, BC--function, Fitzpatrick
function, James space, linear relation, maximally monotone operator,
monotone operator, multifunction, operator of type (BR), operator of type (D), operator of
type (NI), partial inf-convolution, Schauder basis, set-valued
operator, skew operator, space of type (D), uniqueness of
extensions, subdifferential operator.

\noindent

\section{Preliminaries}

Throughout this paper, we assume that $X$ is a real Banach space
with norm $\|\cdot\|$, that $X^*$ is the continuous dual of $X$, and
that $X$ and $X^*$ are paired by $\scal{\cdot}{\cdot}$. As usual, we
identify $X$ with its canonical image in the bidual space $X^{**}$.
Furthermore, $X\times X^*$ and $(X\times X^*)^*: = X^*\times X^{**}$
are likewise paired via $\scal{(x,x^*)}{(y^*,y^{**})}:=
\scal{x}{y^*} + \scal{x^*}{y^{**}}$, where $(x,x^*)\in X\times X^*$
and $(y^*,y^{**}) \in X^*\times X^{**}$.

 Let $A\colon
X\To X^*$ be a \emph{set-valued operator} (also known as a
multifunction) from $X$ to $X^*$, i.e., for every $x\in X$,
$Ax\subseteq X^*$, and let $\gra A:= \menge{(x,x^*)\in X\times
X^*}{x^*\in Ax}$ be the \emph{graph} of $A$.  The \emph{domain} of
$A$ is $\dom A:= \menge{x\in X}{Ax\neq\varnothing}$,  and $\ran
A:=A(X)$ for the \emph{range} of $A$. Recall that $A$ is
\emph{monotone} if
\begin{equation}
\scal{x-y}{x^*-y^*}\geq 0,\quad \forall (x,x^*)\in \gra A\;
\forall (y,y^*)\in\gra A,
\end{equation}
and \emph{maximally monotone} if $A$ is monotone and $A$ has
 no proper monotone extension
(in the sense of graph inclusion).
Let $A:X\rightrightarrows X^*$ be monotone and $(x,x^*)\in X\times X^*$.
 We say $(x,x^*)$ is \emph{monotonically related to}
$\gra A$ if
\begin{align*}
\langle x-y,x^*-y^*\rangle\geq0,\quad \forall (y,y^*)\in\gra
A.\end{align*}

We now recall the three fundamental subclasses of maximally
monotone operators.

 \begin{definition}\label{def1}
 Let $A:X\To X^*$ be maximally monotone.
 Then three key types of monotone operators are defined as follows.
 \begin{enumerate}
 \item $A$ is
\emph{of dense type or type (D)} (1971, \cite{Gossez3}  and \cite{ph2}) if for every
$(x^{**},x^*)\in X^{**}\times X^*$ with
\begin{align*}
\inf_{(a,a^*)\in\gra A}\langle a-x^{**}, a^*-x^*\rangle\geq 0,
\end{align*}
there exist a  bounded net
$(a_{\alpha}, a^*_{\alpha})_{\alpha\in\Gamma}$ in $\gra A$
such that
$(a_{\alpha}, a^*_{\alpha})_{\alpha\in\Gamma}$
weak*$\times$strong converges to
$(x^{**},x^*)$.
\item $A$ is
\emph{of type negative infimum (NI)} (1996, \cite{SiNI}) if
\begin{align*}
\sup_{(a,a^*)\in\gra A}\big(\langle a,x^*\rangle+\langle a^*,x^{**}\rangle
-\langle a,a^*\rangle\big)
\geq\langle x^{**},x^*\rangle,
\quad \forall(x^{**},x^*)\in X^{**}\times X^*.
\end{align*}

\item $A$ is
\emph{of  ``Br{\o}nsted-Rockafellar" (BR) type  (1999, \cite{Si6})
if whenever $(x,x^*)\in X\times X^*$, $\alpha,\beta>0$ while
\begin{align*}\inf_{(a,a^*)\in\gra A} \langle x-a,x^*-a^*\rangle
>-\alpha\beta\end{align*} then there exists $(b,b^*)\in\gra A$ such
that $\|x-b\|<\alpha,\|x^*-b^*\|<\beta$.}
\end{enumerate}
\end{definition}
As we shall see below in Fact \ref{PF:Su1}, it is now known that the
first two classes coincide. This coincidence is central to many of
our proofs. Fact~\ref{MAS:BR1}
 also shows us that every maximally monotone operator of type (D) is of
type (BR)(The converse fails, see
Example~\ref{FPEX:1}\ref{BCCE:A7}.).
 Moreover, in reflexive space every maximally monotone operator is
of type (D), as is the subdifferential operator of every closed convex function
on a Banach space. While monotone operator theory is rather complete
in reflexive space
--- and for type (D) operators in general space --- the general
situation is less clear \cite{BorVan,Bor3}. Hence our continuing
interest in operators which are not of type (D).

We shall say  a Banach space $X$ is \emph{of type (D)} \cite{Bor3}
if every maximally monotone operator  on $X$ is of  type (D). At
present the only known type (D) spaces are the reflexive spaces; and
our work here suggests that there are no non-reflexive type (D)
spaces.  In \cite[Exercise 9.6.3]{BorVan} such spaces were called
(NI) spaces and some potential non-reflexive examples were
conjectured; all of which are ruled out by our current work. In
\cite[Theorem 9.79]{BorVan} a variety of the pleasant properties of
type (D) spaces was listed.

\subsection{More preliminary technicalities}

 Maximal monotone operators have proven to be a potent class of
objects in modern Optimization and Analysis; see, e.g.,
\cite{Bor1,Bor2,Bor3}, the books \cite{BC2011,
BorVan,BurIus,ph,Si,Si2,RockWets,Zalinescu} and the references
therein.

We adopt standard notation used in these books especially
\cite[Chapter 2]{BorVan} and \cite{Bor1, Si, Si2}: Given a subset
$C$ of $X$,
the \emph{indicator function} of $C$, written as $\iota_C$, is defined
at $x\in X$ by
\begin{align}
\iota_C (x):=\begin{cases}0,\,&\text{if $x\in C$;}\\
+\infty,\,&\text{otherwise}.\end{cases}\end{align}
 The \emph{closed unit
ball} is $B_X:=\menge{x\in X}{\|x\|\leq 1}$, and
$\NN:=\{1,2,3,\ldots\}$.

Let $\alpha,\beta\in\RR$. In the sequel it will also be useful to
let $\delta_{\alpha,\beta}$ be defined by $\delta_{\alpha,\beta}:=1$,
if $\alpha=\beta$; $\delta_{\alpha,\beta}:=0$, otherwise.

For a subset $C^*$ of $X^*$, $\overline{C^*}^{\wk}$
  is the weak$^{*}$ closure of $C^*$.
If $Z$ is a real  Banach space with dual $Z^*$ and a set $S\subseteq
Z$, we denote $S^\bot$ by $S^\bot := \{z^*\in Z^*\mid\langle
z^*,s\rangle= 0,\quad \forall s\in S\}$. Given a subset $D$ of
$Z^*$, we define $D_{\bot}$ \cite{PheSim} by $D_\bot := \{z\in
Z\mid\langle z,d^*\rangle= 0,\quad \forall d^*\in D\}$.

The \emph{adjoint} of an operator  $A$, written $A^*$, is defined by
\begin{equation*}
\gra A^* :=
\menge{(x^{**},x^*)\in X^{**}\times X^*}{(x^*,-x^{**})\in(\gra A)^{\bot}}.
\end{equation*}
We say $A$ is a \emph{linear relation} if $\gra A$ is a linear subspace.
 We say that $A$ is
\emph{skew} if $\gra A \subseteq \gra (-A^*)$;
equivalently, if $\langle x,x^*\rangle=0,\; \forall (x,x^*)\in\gra A$.
Furthermore,
$A$ is \emph{symmetric} if $\gra A
\subseteq\gra A^*$; equivalently, if $\scal{x}{y^*}=\scal{y}{x^*}$,
$\forall (x,x^*),(y,y^*)\in\gra A$.
We define the \emph{symmetric part} and the \emph{skew part} of $A$ via
\begin{equation}
\label{Fee:1}
P := \thalb A + \thalb A^* \quad\text{and}\quad
S:= \thalb A - \thalb A^*,
\end{equation}
respectively. It is easy to check that $P$ is symmetric and that $S$
is skew. Let $A:X\rightrightarrows X^*$ be monotone and $S$ be a
subspace of $X$.
 We say $A$ is \emph{$S$--saturated} \cite{Si2} if
\begin{align*}
Ax+S^{\bot}=Ax,\quad
\forall x\in\dom A.
\end{align*}
We say a maximally monotone operator $A:X\rightrightarrows X^*$ is
\emph{unique} if all maximally monotone extensions of
$A$ (in the sense of graph inclusion) in $X^{**}\times X^*$ coincide.

 Let $f\colon X\to \RX$. Then
$\dom f:= f^{-1}(\RR)$ is the \emph{domain} of $f$, and $f^*\colon
X^*\to\RXX\colon x^*\mapsto \sup_{x\in X}(\scal{x}{x^*}-f(x))$ is
the \emph{Fenchel conjugate} of $f$. We say $f$ is proper if $\dom f\neq\varnothing$.
Let $f$ be proper. The \emph{subdifferential} of
$f$ is defined by
   $$\partial f\colon X\To X^*\colon
   x\mapsto \{x^*\in X^*\mid(\forall y\in
X)\; \scal{y-x}{x^*} + f(x)\leq f(y)\}.$$
 For $\varepsilon \geq 0$,
the \emph{$\varepsilon$--subdifferential} of $f$ is defined by
   $$\partial_{\varepsilon} f\colon X\To X^*\colon
   x\mapsto \menge{x^*\in X^*}{(\forall y\in
X)\; \scal{y-x}{x^*} + f(x)\leq f(y)+\varepsilon}.$$ Note that
$\partial f= \partial_{0}f$.
 We  denote  by $J:=J_X$ the duality map, i.e.,
the subdifferential of the function $\tfrac{1}{2}\|\cdot\|^2$
mapping $X$ to $X^*$.

 Now let $F:X\times X^*\rightarrow\RX$.
 We say $F$ is a \emph{BC--function} (BC stands for
``Bigger conjugate'') \cite{Si2} if $F$ is proper and
convex with
\begin{align} F^*(x^*,x)
 \geq F(x,x^*)\geq\langle x,x^*\rangle\quad\forall(x,x^*)\in X\times X^*.
 \end{align}

Let $Y$ be another real Banach space. We set  $P_X: X\times Y\rightarrow
X\colon (x,y)\mapsto x$,
 and
 $P_Y: X\times Y\rightarrow Y\colon (x,y)\mapsto y$.
Let $L:X\rightarrow Y$ be linear. We say $L$ is a (linear)
\emph{isomorphism} into $Y$ if $L$ is one to one, continuous and
$L^{-1}$ is continuous on $\ran L$. We say $L$ is an \emph{isometry}
if $\|Lx\|=\|x\|,  \forall x\in X$. The spaces $X$, $Y$ are then
\emph{isometric} (\emph{isomorphic}) if there exists an isometry
(\emph{isomorphism})  from $X$ onto $Y$.

Let $F_1, F_2\colon X\times Y\rightarrow\RX$.
Then the \emph{partial inf-convolution} $F_1\Box_1 F_2$
is the function defined on $X\times Y$ by
\begin{equation*}F_1\Box_1 F_2\colon
(x,y)\mapsto \inf_{u\in X}
F_1(u,y)+F_2(x-u,y).
\end{equation*}
Then $F_1\Box_2 F_2$
is the function defined on $X\times Y$ by
\begin{equation*}F_1\Box_2 F_2\colon
(x,y)\mapsto \inf_{v\in Y}
F_1(x,y-v)+F_2(x,v).
\end{equation*}
In Example~\ref{FPEX:1}\ref{BCCE:A2}\&\ref{BCCE:A4} of  this paper,
 we provide a negative answer to
the following question  posed by S.\ Simons
\cite[Problem~22.12]{Si2}:
\begin{quote} \emph{Let $F_1,F_2:X\times X^*\rightarrow\RX$ be
 proper lower semicontinuous and convex. Assume that
$F_1, F_2$ are BC--functions and  that
\begin{align*}\text{$\bigcup_{\lambda>0} \lambda\left[P_{X^*}
\dom F_1-P_{X^*}\dom F_2\right]$ is a closed subspace of $X^*$.}
\end{align*} Is $F_1\Box_1 F_2$ necessarily a BC--function?}\end{quote}

We are now ready to set to work. The paper is organized as follows.
In Section~\ref{s:aux}, we collect auxiliary results for future
reference and for the reader's convenience. Our main result
(Theorem~\ref{PBABD:2}) is established in Section~\ref{s:main}. In
Section~\ref{EAPT},  we provide various applications and extensions
including the promised negative answer to Simons' question.
Furthermore, we show that every Banach space containing an
isomorphic copy of the James space $\textbf{J}$ or of  $\textbf{J}^*$,
 of $\ell^{1}$ or of $c_0$ is
not of type (D) (Example~\ref{FPEX:1}\ref{BCCE:A06} or
Corollary~\ref{DSChE:2}, Corollary~\ref{DSChE:1} and  Example~\ref{FPEX:C1}).

 \section{Auxiliary results}\label{s:aux}
Observation:

\begin{fact}\emph{(See \cite[Proposition~2.6.6(c)]{Megg}).}\label{Megg:1}
Let  $D$ be a subspace of $X^*$. Then
$(D_{\bot})^{\bot}=\overline{D}^{\wk}$.
\end{fact}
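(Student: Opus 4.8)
The plan is to establish the two set inclusions separately, the first being a routine closedness observation and the second resting on a weak* separation argument. For $\overline{D}^{\wk} \subseteq (D_\bot)^{\bot}$, I would first note that $D \subseteq (D_\bot)^{\bot}$ directly from the definitions: if $d^* \in D$ and $x \in D_\bot$, then $\scal{x}{d^*} = 0$, so $d^* \in (D_\bot)^{\bot}$. Next I would argue that $(D_\bot)^{\bot}$ is weak* closed by writing it as an intersection of kernels, $(D_\bot)^{\bot} = \bigcap_{x \in D_\bot} \menge{x^* \in X^*}{\scal{x}{x^*} = 0}$; each set in the intersection is the kernel of the weak*-continuous functional $x^* \mapsto \scal{x}{x^*}$ and hence weak* closed, so their intersection is too. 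Since $(D_\bot)^{\bot}$ is then a weak* closed set containing $D$, it contains the weak* closure $\overline{D}^{\wk}$.

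For the reverse inclusion $(D_\bot)^{\bot} \subseteq \overline{D}^{\wk}$ I would argue by contraposition. Take $x^* \in X^* \setminus \overline{D}^{\wk}$. Since $\overline{D}^{\wk}$ is a weak* closed subspace (in particular convex) and the singleton $\{x^*\}$ is a disjoint compact convex set, the Hahn--Banach separation theorem, applied in $X^*$ equipped with its weak* topology, produces a weak*-continuous linear functional strictly separating them. The decisive point is that the topological dual of $X^*$ under the weak* topology is precisely $X$, so this functional is evaluation at some $x \in X$; thus there is $\alpha \in \RR$ with $\scal{x}{d^*} \le \alpha < \scal{x}{x^*}$ for all $d^* \in \overline{D}^{\wk}$. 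Because $\overline{D}^{\wk}$ is a subspace on which the linear map $d^* \mapsto \scal{x}{d^*}$ is bounded above, this map must vanish identically on $\overline{D}^{\wk}$, whence $x \in D_\bot$ while $\scal{x}{x^*} > \alpha \ge 0$. Therefore $x^* \notin (D_\bot)^{\bot}$, which is the contrapositive we wanted.

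The main obstacle is this second inclusion, and within it the identification of the weak* dual of $X^*$ with $X$: this is exactly what forces the separating functional to be evaluation at a genuine point of $X$ (rather than an arbitrary element of $X^{**}$), which is what lets the separating vector land in $D_\bot$. Everything else is bookkeeping with the definitions of $(\cdot)^{\bot}$ and $(\cdot)_{\bot}$ together with the elementary fact that a linear functional bounded above on a subspace vanishes on it.
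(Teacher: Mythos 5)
Your proof is correct, and since the paper states this result as a Fact with only a citation to \cite[Proposition~2.6.6(c)]{Megg} rather than proving it, there is no in-paper argument to diverge from: your two-inclusion scheme---the easy inclusion via weak* closedness of $(D_\bot)^{\bot}$ as an intersection of kernels of evaluation functionals, and the hard inclusion via Hahn--Banach separation in $(X^*,\mathrm{w}^*)$ together with the identification of its dual with $X$ and the observation that a linear functional bounded above on a subspace vanishes there---is precisely the standard bipolar-type proof given in the cited reference.
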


We now record a famous  Banach space  result:

\begin{fact}[Banach and Mazur]
\emph{(See {\cite[Theorem 5.8, page~240]{FabianHH2}
or \cite[Theorem 5.17, page~144]{FabianHH})}.)}
\label{isom:1a}
Every separable Banach space
 is isometric to a subspace of  $C[0,1]$.
 \end{fact}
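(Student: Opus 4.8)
The plan is to factor the desired embedding through $C(K)$ for a suitable compact metric space $K$, and then to embed every such $C(K)$ isometrically into $C[0,1]$. First I would manufacture the target compact metric space from $X$ itself: equip the dual ball $B_{X^*}$ with the weak$^*$ topology, so that by the Banach--Alaoglu theorem it is weak$^*$-compact, and—because $X$ is separable—this weak$^*$ topology on $B_{X^*}$ is metrizable. Set $K:=(B_{X^*},\wk)$. Define $T\colon X\to C(K)$ by $(Tx)(x^*):=\scal{x}{x^*}$; each $Tx$ is weak$^*$-continuous and hence lies in $C(K)$, and $T$ is plainly linear. The key point is that $T$ is an isometry: for every $x\in X$,
\[
\|Tx\|_{\infty}=\sup_{x^*\in B_{X^*}}|\scal{x}{x^*}|=\|x\|,
\]
the last equality being exactly the norming statement furnished by the Hahn--Banach theorem. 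Thus $X$ embeds isometrically into $C(K)$ with $K$ compact metric.

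Second, I would reduce $C(K)$ to $C[0,1]$ in two isometric steps, both passing through the Cantor set $\Delta\subseteq[0,1]$. The standard topological input is that every compact metric space is a continuous image of $\Delta$; so choose a continuous surjection $\phi\colon\Delta\to K$. The composition operator $C(K)\to C(\Delta)$, $f\mapsto f\circ\phi$, is linear and, since $\phi$ is onto, norm-preserving, so $C(K)$ embeds isometrically into $C(\Delta)$. To pass from $C(\Delta)$ to $C[0,1]$ I would build a linear isometric extension operator $E$: write the open complement $[0,1]\setminus\Delta$ as a countable disjoint union of intervals $(a_i,b_i)$ with endpoints in $\Delta$, set $Ef:=f$ on $\Delta$, and let $Ef$ interpolate affinely between $f(a_i)$ and $f(b_i)$ on each gap $(a_i,b_i)$. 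Then $Ef$ is continuous on $[0,1]$ (uniform continuity of $f$ controls the behaviour at Cantor points that are limits of gaps), $E$ is linear, and $\|Ef\|_{\infty}=\|f\|_{\infty}$ because affine interpolation never overshoots its endpoint values. Chaining $T$, the composition operator, and $E$ yields the required isometric embedding $X\hookrightarrow C[0,1]$.

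The two genuinely nontrivial ingredients are the weak$^*$-metrizability of $B_{X^*}$, which is precisely where separability of $X$ is used, and the existence of a continuous surjection $\Delta\twoheadrightarrow K$ from the Cantor set onto an arbitrary compact metric space. The latter is the crux: one constructs it by repeatedly subdividing $K$ into finitely many small closed pieces and matching these against the nested clopen structure of $\Delta$, coding points of $K$ by branches of the Cantor tree; verifying well-definedness and continuity of the resulting limit map is the most delicate part. By contrast, the isometry assertions for $T$, for the composition operator, and for $E$ are each immediate once the underlying structures are in place.
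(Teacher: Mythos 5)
The paper offers no proof of this Fact---it is quoted directly from the cited references of Fabian et al.---and your argument is exactly the classical Banach--Mazur proof given there: the evaluation embedding $x\mapsto\scal{x}{\cdot}$ into $C\bigl((B_{X^*},\mathrm{w}^*)\bigr)$, isometric by Hahn--Banach and with metrizable compact domain precisely because $X$ is separable, followed by the Alexandroff--Hausdorff continuous surjection from the Cantor set $\Delta$ onto that ball and the affine-interpolation extension operator $C(\Delta)\to C[0,1]$, each step isometric for the reasons you state. Your proposal is correct and coincides with the standard proof the paper implicitly relies on; there is nothing to fix.
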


Now we turn to  prerequisite results on Fitzpatrick functions,
monotone operators, and  linear relations.

\begin{fact}[Fitzpatrick]
\emph{(See {\cite[Corollary~3.9 and Proposition~4.2]{Fitz88}} and
\cite{Bor1,BorVan}.)} \label{f:Fitz} Let $A\colon X\To X^*$ be
maximally monotone, and set
\begin{equation}
F_A\colon X\times X^*\to\RX\colon
(x,x^*)\mapsto \sup_{(a,a^*)\in\gra A}
\big(\scal{x}{a^*}+\scal{a}{x^*}-\scal{a}{a^*}\big),
\end{equation}
which is the \emph{Fitzpatrick function} associated with $A$.
Then $F_A$ is a BC--function and $F_A=\langle\cdot,\cdot\rangle$ on $\gra A$.
\end{fact}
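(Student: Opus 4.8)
The plan is to establish the two assertions of Fact~\ref{f:Fitz} separately: first that $F_A$ is a BC--function, and second that $F_A$ agrees with the pairing $\scal{\cdot}{\cdot}$ on $\gra A$. Throughout I would exploit that $A$ is maximally monotone and that $F_A$ is, by its very definition as a supremum of continuous affine functions of $(x,x^*)$, proper (properness needing a word) lower semicontinuous and convex.

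First I would verify the lower Fenchel inequality $F_A(x,x^*)\geq\scal{x}{x^*}$. Fix $(x,x^*)\in X\times X^*$. If $(x,x^*)\in\gra A$, then taking $(a,a^*)=(x,x^*)$ inside the supremum yields $F_A(x,x^*)\geq\scal{x}{x^*}+\scal{x}{x^*}-\scal{x}{x^*}=\scal{x}{x^*}$; this already shows $F_A\geq\scal{\cdot}{\cdot}$ on the graph. For a general point I would rewrite the defining expression as $\scal{x}{a^*}+\scal{a}{x^*}-\scal{a}{a^*}=\scal{x}{x^*}-\scal{x-a}{x^*-a^*}$, so that
\begin{equation*}
F_A(x,x^*)=\scal{x}{x^*}-\inf_{(a,a^*)\in\gra A}\scal{x-a}{x^*-a^*}.
\end{equation*}
Monotonicity of $A$ makes each term $\scal{x-a}{x^*-a^*}$ need not be nonnegative in general, but maximality is the key: if the infimum were strictly positive, then $(x,x^*)$ would be monotonically related to $\gra A$ and hence, by maximality, $(x,x^*)\in\gra A$, forcing the infimum to be $\le 0$ by taking $(a,a^*)=(x,x^*)$. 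In all cases the infimum is $\leq 0$, whence $F_A(x,x^*)\geq\scal{x}{x^*}$, and it also follows that on $\gra A$ the infimum equals $0$, giving $F_A=\scal{\cdot}{\cdot}$ on $\gra A$ once the reverse inequality is in hand.

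Next I would prove the reverse inequality $F_A(x,x^*)\leq\scal{x}{x^*}$ for $(x,x^*)\in\gra A$, which together with the previous paragraph yields equality on the graph. This is immediate from the displayed formula above together with monotonicity: for $(x,x^*)\in\gra A$ every term $\scal{x-a}{x^*-a^*}$ is $\geq 0$, so the infimum is $\geq 0$, and combined with the reverse bound it is exactly $0$, giving $F_A(x,x^*)=\scal{x}{x^*}$. It remains to establish the conjugate inequality $F_A^*(x^*,x)\geq F_A(x,x^*)$. Computing the conjugate on $X^*\times X^{**}$ and restricting to the canonical image $X\subseteq X^{**}$, I would use that $F_A$ is the supremum of the affine functions $(x,x^*)\mapsto\scal{x}{a^*}+\scal{a}{x^*}-\scal{a}{a^*}$ indexed by $(a,a^*)\in\gra A$; the Fenchel conjugate of such a supremum dominates, at the point $(x^*,x)$, the value obtained by pairing against the index point $(a,a^*)$ itself. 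Concretely, for each $(a,a^*)\in\gra A$ the bound $F_A^*(x^*,x)\geq\scal{a}{x^*}+\scal{x}{a^*}-F_A(a,a^*)=\scal{a}{x^*}+\scal{x}{a^*}-\scal{a}{a^*}$ holds because $F_A(a,a^*)=\scal{a}{a^*}$ on the graph; taking the supremum over $(a,a^*)\in\gra A$ recovers exactly $F_A(x,x^*)$.

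The main obstacle I anticipate is the conjugate inequality, specifically the bookkeeping of the pairing between $X\times X^*$ and its dual $X^*\times X^{**}$ and the correct handling of the variable swap $(x,x^*)\mapsto(x^*,x)$; the subtlety is that $F_A^*$ lives on $X^*\times X^{**}$ while I only want the estimate at points of the form $(x^*,x)$ with $x\in X$, so I must restrict to the canonical embedding and ensure the affine minorants of $F_A$ pair correctly. The monotonicity and maximality arguments for the two pairing inequalities are routine; the delicate point is purely that the conjugate is evaluated against graph points lying in $X\times X^*$ rather than the full bidual, which is precisely why the inequality is only a one-sided BC bound and not an equality.
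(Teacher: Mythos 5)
Your proof is correct and follows exactly the classical argument: the paper does not prove this statement itself (it is quoted as a Fact from Fitzpatrick's original paper and the cited surveys), and your route --- rewriting $F_A(x,x^*)=\scal{x}{x^*}-\inf_{(a,a^*)\in\gra A}\scal{x-a}{x^*-a^*}$, using maximality to force the infimum $\leq 0$ everywhere, monotonicity to get the infimum $=0$ on $\gra A$, and then bounding $F_A^*(x^*,x)$ below by restricting the conjugate's supremum to graph points where $F_A(a,a^*)=\scal{a}{a^*}$ --- is precisely the standard proof. The one word you still owe on properness: a maximally monotone operator has $\gra A\neq\varnothing$, so $F_A$ majorizes a continuous affine function (hence is nowhere $-\infty$) and satisfies $F_A(a,a^*)=\scal{a}{a^*}<+\infty$ at any graph point, which settles it.
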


\begin{fact}[Simons and Z\u{a}linescu]
\emph{(See \cite[Theorem~4.2]{SiZ} or \cite[Theorem~16.4(a)]{Si2}.)}\label{F4}
Let $Y$ be a real Banach space and $F_1, F_2\colon X\times Y \to \RX$ be proper,
lower semicontinuous, and convex. Assume that
for every $(x,y)\in X\times Y$,
\begin{equation*}(F_1\Box_2 F_2)(x,y)>-\infty
\end{equation*}
and that  $\bigcup_{\lambda>0} \lambda\left[P_X\dom F_1-P_X\dom F_2\right]$
is a closed subspace of $X$. Then for every $(x^*,y^*)\in X^*\times Y^*$,
\begin{equation*}
(F_1\Box_2 F_2)^*(x^*,y^*)=\min_{u^*\in X^*}
\left[F_1^*(x^*-u^*,y^*)+F_2^*(u^*,y^*)\right].
\end{equation*}\end{fact}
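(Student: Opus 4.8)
The plan is to realize the partial inf-convolution as the partial minimization of an ordinary sum of two convex functions on the enlarged space $X\times Y\times Y$, and then to invoke the exact Fenchel--Rockafellar sum formula (in its Attouch--Br\'ezis form, whose constraint qualification turns out to be precisely the closed-subspace hypothesis). Concretely, I would introduce $G_1,G_2\colon X\times Y\times Y\to\RX$ by $G_1(x,y,v):=F_1(x,y-v)$ and $G_2(x,y,v):=F_2(x,v)$, so that $G:=G_1+G_2$ satisfies $G(x,y,v)=F_1(x,y-v)+F_2(x,v)$ and hence $(F_1\Box_2 F_2)(x,y)=\inf_{v\in Y}G(x,y,v)$. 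Both $G_1,G_2$ are proper, lsc, and convex since $F_1,F_2$ are, and the hypothesis $(F_1\Box_2F_2)(x,y)>-\infty$ is what keeps the partial minimization $F_1\Box_2F_2$ proper (it rules out the degenerate case where the conjugate is identically $+\infty$).

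First I would record the (always exact) partial-minimization identity: for $h(x,y)=\inf_v G(x,y,v)$ one has, by merely interchanging suprema,
\[
(F_1\Box_2F_2)^*(x^*,y^*)=\sup_{x,y,v}\big(\scal{x}{x^*}+\scal{y}{y^*}-G(x,y,v)\big)=G^*(x^*,y^*,0),
\]
where $G^*$ is the conjugate on $(X\times Y\times Y)^*=X^*\times Y^*\times Y^*$ and the third dual coordinate is set to $0$. This reduces everything to evaluating $G^*$ at $(x^*,y^*,0)$.

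Next I would compute the two partial conjugates explicitly. Substituting $w=y-v$ in the definition of $G_1^*$ forces the dual $v$-variable and yields $G_1^*(a^*,b^*,c^*)=F_1^*(a^*,b^*)$ if $c^*=-b^*$ and $+\infty$ otherwise; symmetrically, freezing the free $y$-variable in $G_2^*$ gives $G_2^*(a^*,b^*,c^*)=F_2^*(a^*,c^*)$ if $b^*=0$ and $+\infty$ otherwise. I would then apply the exact sum rule $G^*=G_1^*\Box G_2^*$ with attained infimal convolution, evaluate at $(x^*,y^*,0)$, and eliminate the constrained dual variables: the constraint $c^*=-b^*$ from $G_1^*$ together with $y^*-b^*=0$ from $G_2^*$ pins down $b^*=y^*$ and $c^*=-y^*$, leaving only the $X^*$-component free. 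Writing that component as $u^*=x^*-a^*$ collapses the expression to $\min_{u^*\in X^*}\big[F_1^*(x^*-u^*,y^*)+F_2^*(u^*,y^*)\big]$, as claimed.

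The crux is justifying the exact sum rule, i.e.\ checking that its constraint qualification is exactly the stated hypothesis. Here I would compute $\dom G_1-\dom G_2$ directly: because the $v$-coordinate in $G_1$ and the $y$-coordinate in $G_2$ are unconstrained, a short decoupling argument shows that for any fixed admissible $x$-data the second and third coordinates of the difference already sweep out all of $Y\times Y$, so that $\dom G_1-\dom G_2=(P_X\dom F_1-P_X\dom F_2)\times Y\times Y$. Consequently $\bigcup_{\lambda>0}\lambda[\dom G_1-\dom G_2]$ is a closed subspace of $X\times Y\times Y$ if and only if $\bigcup_{\lambda>0}\lambda[P_X\dom F_1-P_X\dom F_2]$ is a closed subspace of $X$, which is precisely the assumption. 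I expect this to be the main obstacle: one must (i) verify the decoupling of the free coordinates carefully and identify the projected condition with the hypothesis, and (ii) confirm that the qualification delivers \emph{attainment} of the infimal convolution, so that the conclusion is a genuine $\min$ rather than merely an $\inf$.
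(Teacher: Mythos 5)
The paper contains no internal proof of this Fact---it is imported verbatim from Simons and Z\u{a}linescu (\cite[Theorem~4.2]{SiZ}, or \cite[Theorem~16.4(a)]{Si2})---so your argument can only be judged on its own terms, and it is correct and complete. All three pivots check out: (a) the partial-minimization identity $(F_1\Box_2F_2)^*(x^*,y^*)=G^*(x^*,y^*,0)$ holds unconditionally by interchanging suprema; (b) the conjugate computations are right---the substitution $w=y-v$ gives $G_1^*(a^*,b^*,c^*)=F_1^*(a^*,b^*)$ when $c^*=-b^*$ (and $+\infty$ otherwise), the free $y$-variable gives $G_2^*(a^*,b^*,c^*)=F_2^*(a^*,c^*)$ when $b^*=0$, and on the slice with third dual coordinate $0$ the constraints force $b^*=y^*$, $c^*=-y^*$, leaving exactly $\min_{u^*}\left[F_1^*(x^*-u^*,y^*)+F_2^*(u^*,y^*)\right]$ after the change of variable $u^*=x^*-a^*$; (c) the decoupling $\dom G_1-\dom G_2=\left(P_X\dom F_1-P_X\dom F_2\right)\times Y\times Y$ is valid, since for admissible $x$-data the triple $(y_1-y_2,\,v_1-v_2)$ sweeps out $Y\times Y$, so the Attouch--Br\'ezis constraint qualification on $X\times Y\times Y$ is exactly the stated hypothesis, and that theorem delivers $G^*=G_1^*\,\Box\,G_2^*$ with attainment, hence a genuine $\min$. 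Two minor remarks. First, properness of $G_1+G_2$, which Attouch--Br\'ezis also requires, deserves a line: since the closed subspace contains $0$, some $\lambda(d_1-d_2)=0$ with $\lambda>0$, so $\dom G_1\cap\dom G_2\neq\varnothing$. Second, your parenthetical about the role of the hypothesis $(F_1\Box_2F_2)(x,y)>-\infty$ is slightly off: your route never uses it---with the usual conventions, if the partial infimum hits $-\infty$ then both sides of the conclusion are identically $+\infty$ and the identity holds degenerately; that hypothesis matters for how the Fact is applied elsewhere (e.g., in the proof of Theorem~\ref{PBABD:2}\ref{BCC:3}, where \eqref{Sppl:sa1} is verified precisely to invoke this Fact). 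For comparison with the cited sources: Simons and Z\u{a}linescu derive the result from their own sharpened two-function Fenchel duality theorem (an Attouch--Br\'ezis-type statement with attainment) via essentially the same product-space decoupling, so your proof is the standard route, differing only in quoting classical Attouch--Br\'ezis rather than their refined version.
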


\begin{fact}[Simons  and Z\u{a}linescu]\emph{(See \cite[Theorem~16.4(b)]{Si2}.)}\label{BCCFA:1}
Let $Y$ be a real Banach space and $F_1, F_2\colon X\times Y \to \RX$ be proper,
lower semicontinuous and convex. Assume that
for every $(x,y)\in X\times Y$,
\begin{equation*}(F_1\Box_1 F_2)(x,y)>-\infty
\end{equation*}
and that  $\bigcup_{\lambda>0} \lambda\left[P_Y\dom F_1-P_Y\dom F_2\right]$
is a closed subspace of $Y$. Then for every $(x^*,y^*)\in X^*\times Y^*$,
\begin{equation*}
(F_1\Box_1 F_2)^*(x^*,y^*)=\min_{v^*\in Y^*}
\left[F_1^*(x^*,v^*)+F_2^*(x^*,y^*-v^*)\right].
\end{equation*}\end{fact}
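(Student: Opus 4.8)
The plan is to deduce this from Fact~\ref{F4} by interchanging the two coordinate spaces, since the two statements are literally the same result read through the swap that exchanges $X$ and $Y$ (and hence $\Box_1$ and $\Box_2$). Let $\sigma\colon X\times Y\to Y\times X\colon (x,y)\mapsto(y,x)$ denote the coordinate swap; it is a linear isometric isomorphism for the usual product norm, and its adjoint is the analogous swap $\sigma^*\colon Y^*\times X^*\to X^*\times Y^*\colon (y^*,x^*)\mapsto(x^*,y^*)$. I would define $G_1,G_2\colon Y\times X\to\RX$ by $G_i:=F_i\circ\sigma^{-1}$, i.e.\ $G_i(y,x):=F_i(x,y)$. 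Each $G_i$ is again proper, lower semicontinuous, and convex, and I would apply Fact~\ref{F4} to the pair $G_1,G_2$ with the roles of the two factor spaces exchanged, so that the ``first'' factor there is our $Y$.

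First I would verify three bookkeeping identities. (a) Commutativity of $\Box_1$ (immediate from the substitution $u\mapsto x-u$) together with the definition of $G_i$ gives $(G_1\Box_2 G_2)(y,x)=(F_1\Box_1 F_2)(x,y)$ for every $(x,y)$; in particular the hypothesis $(F_1\Box_1 F_2)(x,y)>-\infty$ is exactly the finiteness hypothesis required of $G_1\Box_2 G_2$ in Fact~\ref{F4}. (b) Since $\dom G_i=\sigma(\dom F_i)$, one has $P_Y\dom G_i=P_Y\dom F_i$, so the constraint qualification ``$\bigcup_{\lambda>0}\lambda[P_Y\dom F_1-P_Y\dom F_2]$ is a closed subspace of $Y$'' is precisely the closed-subspace hypothesis of Fact~\ref{F4} for $G_1,G_2$, whose leading factor is $Y$. (c) Because Fenchel conjugation intertwines with the isometric isomorphism $\sigma$ (for $g=f\circ\sigma^{-1}$ one has $g^*=f^*\circ\sigma^*$), it follows that $G_i^*(y^*,x^*)=F_i^*(x^*,y^*)$ and, via (a), $(G_1\Box_2 G_2)^*(y^*,x^*)=(F_1\Box_1 F_2)^*(x^*,y^*)$.

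With these in hand, applying Fact~\ref{F4} to $G_1,G_2$ yields, for every $(y^*,x^*)\in Y^*\times X^*$,
\[(G_1\Box_2 G_2)^*(y^*,x^*)=\min_{w^*\in Y^*}\big[G_1^*(y^*-w^*,x^*)+G_2^*(w^*,x^*)\big].\]
Rewriting both sides through (c) and then substituting $v^*:=y^*-w^*$ (so that $y^*-v^*=w^*$ and the minimum still ranges over all of $Y^*$) transforms this into the asserted identity $(F_1\Box_1 F_2)^*(x^*,y^*)=\min_{v^*\in Y^*}[F_1^*(x^*,v^*)+F_2^*(x^*,y^*-v^*)]$, with attainment of the minimum inherited directly from Fact~\ref{F4}. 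Thus the only real content is already packaged in Fact~\ref{F4}, and the reduction is pure bookkeeping. If instead one wished to argue from scratch, the genuine obstacle would be the \emph{attainment} of the minimum, equivalently the absence of a duality gap in the Fenchel problem dual to the partial inf-convolution; that is exactly what the closed-subspace constraint qualification buys through an Attouch--Brezis/Rockafellar-type argument, and here it is avoided entirely by passing through $\sigma$.
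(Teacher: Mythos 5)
Your reduction is correct in every detail: the swap $\sigma$ converts $\Box_1$ into $\Box_2$ (your identity (a) is just the change of variable $u\mapsto x-u$ together with the relabelling of coordinates), domains, projections and conjugates transform exactly as you claim in (b) and (c), and the two hypotheses of Fact~\ref{F4} applied to $G_1,G_2$ on $Y\times X$ are precisely the two hypotheses of Fact~\ref{BCCFA:1} for $F_1,F_2$, with attainment of the minimum transferring verbatim through the substitution $v^*=y^*-w^*$. Note, however, that there is no proof in the paper to compare against: Fact~\ref{BCCFA:1} is imported as a citation to \cite[Theorem~16.4(b)]{Si2}, exactly as Fact~\ref{F4} is \cite[Theorem~16.4(a)]{Si2}. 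Your argument makes explicit the symmetry by which parts (a) and (b) of Simons' Theorem~16.4 are one statement read with the factor spaces exchanged, so modulo Fact~\ref{F4} your derivation is complete and self-contained; the genuine analytic content --- the Attouch--Brezis/Rockafellar-type argument that the closed-subspace constraint qualification forces exactness (no duality gap and attainment) in the dual inf-convolution --- lives inside the cited theorem, which both you and the paper treat as a black box. One cosmetic remark: the isometry of $\sigma$ is never actually needed; all that is used is that $\sigma$ intertwines the dual pairings, which your direct computation $G_i^*(y^*,x^*)=F_i^*(x^*,y^*)$ already verifies without appeal to norms.
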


Phelps and Simons proved the next Fact~\ref{PF:1} for unbounded
 linear operators in \cite[Proposition~3.2(a)]{PheSim}, but their
proof can also be adapted for general linear relations.
 For reader's convenience, we write down  their proof.

\begin{fact}[Phelps and Simons]
\label{PF:1} Let $A: X\rightrightarrows X^*$ be a monotone linear relation.
 Then $(x,x^*)\in X\times
X^*$ is monotonically related to $\gra A$ if and only if
\begin{equation*}
\langle x,x^*\rangle\geq 0\; \text{and}\;
 \left[\langle y^*,x\rangle+\langle x^*,y\rangle\right]^2
\leq 4\langle x^*,x\rangle\langle y^*,y\rangle,\quad \forall(y,y^*)\in\gra A.
\end{equation*}

\end{fact}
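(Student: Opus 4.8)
The plan is to exploit that $\gra A$ is a subspace, so each $(y,y^*)\in\gra A$ may be replaced by $(ty,ty^*)$ for arbitrary $t\in\RR$, and to read the monotone-relatedness condition as the nonnegativity of a quadratic polynomial in $t$. Expanding $\langle x-ty,\, x^*-ty^*\rangle\geq 0$ gives
\[
q_t := \langle y,y^*\rangle\, t^2 - \big(\langle x,y^*\rangle + \langle y,x^*\rangle\big)\, t + \langle x,x^*\rangle \geq 0 .
\]
Because $A$ is a monotone linear relation, $\langle y,y^*\rangle\geq 0$ for every $(y,y^*)\in\gra A$, so the leading coefficient of $q_t$ is nonnegative; and taking $(y,y^*)=(0,0)\in\gra A$ in the relatedness condition delivers $\langle x,x^*\rangle\geq 0$ at once. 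The whole argument then rests on the elementary observation that, for reals $a\geq 0$ and $c\geq 0$, the quadratic $at^2+bt+c$ is nonnegative for all $t\in\RR$ if and only if $b^2\leq 4ac$ (when $a=0$ the inequality $b^2\leq 4ac=0$ forces $b=0$, and $c\geq 0$ then secures nonnegativity).

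For the forward implication, I would assume $(x,x^*)$ is monotonically related to $\gra A$ and fix $(y,y^*)\in\gra A$. Then $q_t\geq 0$ for every $t\in\RR$, and since $\langle y,y^*\rangle\geq 0$ and $\langle x,x^*\rangle\geq 0$, the observation yields $\big(\langle x,y^*\rangle+\langle y,x^*\rangle\big)^2\leq 4\langle y,y^*\rangle\langle x,x^*\rangle$. Rewriting via the symmetry of the pairing ($\langle x,y^*\rangle=\langle y^*,x\rangle$, and so on) gives precisely the displayed inequality, alongside $\langle x,x^*\rangle\geq 0$.

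For the converse, I would suppose $\langle x,x^*\rangle\geq 0$ and that the displayed inequality holds for every $(y,y^*)\in\gra A$. Fixing $(y,y^*)\in\gra A$, the coefficients of $q_t$ satisfy $a=\langle y,y^*\rangle\geq 0$, $c=\langle x,x^*\rangle\geq 0$, and $b^2\leq 4ac$, so the observation gives $q_t\geq 0$ for all $t\in\RR$. Evaluating at $t=1$ returns $\langle x-y,\, x^*-y^*\rangle\geq 0$, that is, $(x,x^*)$ is monotonically related to $\gra A$.

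The argument is essentially mechanical, and I anticipate no genuine obstacle beyond bookkeeping the sign conventions of the pairing and handling the degenerate case $\langle y,y^*\rangle=0$, where the quadratic collapses to an affine function; it is exactly here that the separately secured nonnegativity of $\langle x,x^*\rangle$ is needed to keep the discriminant criterion equivalent to nonnegativity of $q_t$.
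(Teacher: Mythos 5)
Your proof is correct and takes essentially the same approach as the paper: both replace $(y,y^*)$ by $(\lambda y,\lambda y^*)$ using linearity of $\gra A$ and reduce monotone relatedness to the nonnegativity for all real $\lambda$ of the quadratic $\lambda^2\langle y,y^*\rangle-\lambda\left[\langle y^*,x\rangle+\langle x^*,y\rangle\right]+\langle x,x^*\rangle$, settled by the discriminant criterion. The only difference is cosmetic: the paper cites Phelps--Simons (Lemma~2.1 of \cite{PheSim}) for that criterion, whereas you prove the elementary quadratic observation directly, correctly handling the degenerate case $\langle y,y^*\rangle=0$.
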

\begin{proof}
We have the following equivalences:
\begin{align*}
& (x,x^*)\in X\times
X^*\  \text{is monotonically related to $\gra A$}\\
&\Leftrightarrow \lambda^2\langle y,y^*\rangle-\lambda\left[
\langle y^*,x\rangle+\langle x^*,y\rangle\right]+\langle x,x^*\rangle=\langle
 \lambda y^*-x^*,\lambda y-x\rangle\geq0,\forall \lambda\in\RR, \forall(y,y^*)\in\gra A\\
&\Leftrightarrow\langle x,x^*\rangle\geq 0\; \text{and}\;
 \left[\langle y^*,x\rangle+\langle x^*,y\rangle\right]^2
\leq 4\langle x^*,x\rangle\langle y^*,y\rangle,\ \forall(y,y^*)\in\gra A\
\text{(by \cite[Lemma~2.1]{PheSim})}.
\end{align*}
This completes the proof.
\end{proof}

\begin{fact}[Simons / Marques Alves and Svaiter]\emph{(See
\cite[Lemma~15]{SiNI} or \cite[Theorem~36.3(a)]{Si2}, and
\cite[Theorem~4.4]{MarSva}.)} \label{PF:Su1} Let $A:X \To X^*$ be
maximally  monotone. Then $A$ is of type (D) if and only if it is
of type (NI).
\end{fact}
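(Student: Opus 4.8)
The plan is to read both conditions off a single object, the extension of the Fitzpatrick function $F_A$ to the bidual pairing, namely
\[
\widehat{F}_A\colon X^{**}\times X^*\to\RX\colon (x^{**},x^*)\mapsto \sup_{(a,a^*)\in\gra A}\big(\langle a,x^*\rangle+\langle x^{**},a^*\rangle-\langle a,a^*\rangle\big).
\]
By definition, $A$ is of type (NI) precisely when $\widehat{F}_A(x^{**},x^*)\geq\langle x^{**},x^*\rangle$ for every $(x^{**},x^*)$. On the other hand, expanding $\langle a-x^{**},a^*-x^*\rangle=\langle a,a^*\rangle-\langle a,x^*\rangle-\langle x^{**},a^*\rangle+\langle x^{**},x^*\rangle$ shows that the hypothesis of the type (D) condition, $\inf_{(a,a^*)\in\gra A}\langle a-x^{**},a^*-x^*\rangle\geq0$, is equivalent to $\widehat{F}_A(x^{**},x^*)\leq\langle x^{**},x^*\rangle$. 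Thus both notions are governed by comparing $\widehat{F}_A$ with the duality pairing, and this is the reduction I would set up first.

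For the direction (D) $\Rightarrow$ (NI) I would argue by contradiction. If (NI) fails there is $(x^{**},x^*)$ with $\widehat{F}_A(x^{**},x^*)<\langle x^{**},x^*\rangle$; in particular the type (D) hypothesis holds, so there is a bounded net $(a_\alpha,a^*_\alpha)$ in $\gra A$ with $a_\alpha\weaklys x^{**}$ and $a^*_\alpha\to x^*$ in norm. For each $\alpha$ the definition of $\widehat{F}_A$ gives $\langle a_\alpha,x^*\rangle+\langle x^{**},a^*_\alpha\rangle-\langle a_\alpha,a^*_\alpha\rangle\leq\widehat{F}_A(x^{**},x^*)$. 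Now the boundedness of $(a_\alpha)$ together with $a^*_\alpha\to x^*$ forces $\langle a_\alpha,a^*_\alpha\rangle-\langle a_\alpha,x^*\rangle\to0$, while $\langle x^{**},a^*_\alpha\rangle\to\langle x^{**},x^*\rangle$; hence the left-hand side converges to $\langle x^{**},x^*\rangle$, yielding $\langle x^{**},x^*\rangle\leq\widehat{F}_A(x^{**},x^*)$, a contradiction. This direction is routine once the net convergence is exploited with care about which factor is weak* and which is strong.

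The substance of the statement lies in the converse (NI) $\Rightarrow$ (D); this is the Marques Alves--Svaiter contribution and the step I expect to be the real obstacle. Here I am handed a point $(x^{**},x^*)$ with $\widehat{F}_A(x^{**},x^*)\leq\langle x^{**},x^*\rangle$, which by (NI) forces equality, and I must manufacture an honest bounded net in $\gra A$ converging weak*$\times$strong to it. After translating in the dual variable to reduce to $x^*=0$, the plan is to run a Br{\o}ndsted--Rockafellar / Ekeland variational argument on $F_A$: the equality case of the Fenchel--Young inequality for $\widehat{F}_A$ says that $(x^*,x^{**})$ is, up to the canonical flip, a subgradient of $\widehat{F}_A$ at $(x^{**},x^*)$, so one perturbs and minimizes $F_A(y,y^*)-\langle y,x^*\rangle-\langle x^{**},y^*\rangle$ and converts the resulting near-minimizers into approximate equality cases $F_A\approx\langle\cdot,\cdot\rangle$, i.e.\ into points near $\gra A$.

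The genuinely hard point is that passing from the analytic inequality (NI)---a single statement about the value of a supremum---to the topological conclusion (D)---the existence of approximants---requires more than weak* density: one must obtain \emph{norm} convergence of the dual components $a^*_\alpha\to x^*$, not merely weak* convergence, while simultaneously keeping the net bounded and the primal components weak*-convergent to $x^{**}$. This is exactly where a Br{\o}ndsted--Rockafellar type density mechanism is indispensable, and it is why the (BR) property is the natural companion of type (D) (cf.\ Fact~\ref{MAS:BR1}); controlling the strong convergence of the dual coordinates is the crux of the whole argument.
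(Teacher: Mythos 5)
First, a framing remark: the paper does not prove this statement at all --- it is imported as a Fact, with the two implications attributed to Simons \cite{SiNI,Si2} and to Marques Alves and Svaiter \cite{MarSva} --- so your attempt must be judged against the cited literature rather than an internal argument. Your setup and your proof of (D) $\Rightarrow$ (NI) are correct and complete: the expansion showing that the type (D) hypothesis at $(x^{**},x^*)$ is exactly $\widehat{F}_A(x^{**},x^*)\le\langle x^{**},x^*\rangle$ is right, and the limit computation along the bounded net --- pairing boundedness of $(a_\alpha)$ with the \emph{norm} convergence $a^*_\alpha\to x^*$ to kill $\langle a_\alpha,a^*_\alpha-x^*\rangle$, and weak* convergence of $(a_\alpha)$ against the fixed functional $x^*$ --- is the standard argument and is watertight.

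The genuine gap is the converse, which is the entire content of the theorem, and what you offer for (NI) $\Rightarrow$ (D) is a program, not a proof. Two concrete failures: (a) the classical Br{\o}ndsted--Rockafellar theorem concerns $\partial f$ and produces points of $\gra\partial f$; here approximate equality cases $F_A\approx\langle\cdot,\cdot\rangle$ must be upgraded to genuine points of $\gra A$ at controlled distance, and that upgrade is precisely the ``Br{\o}ndsted--Rockafellar property of representable operators'' proved by Marques Alves and Svaiter \cite{MarSva3} (cf.\ Fact~\ref{MAS:BR1}) --- invoking ``a BR-type density mechanism'' here is circular, since that mechanism \emph{is} the nontrivial theorem being established; (b) your perturbed functional $F_A(y,y^*)-\langle y,x^*\rangle-\langle x^{**},y^*\rangle$ need not produce usable near-minimizers without a coercive regularization (the known proofs minimize a version of $\widehat{F}_A$ augmented by $\tfrac12\|\cdot-x^*\|^2$ in the dual slot and exploit the duality map under a renorming), and you use the (NI) inequality only at the single point $(x^{**},x^*)$, whereas the actual argument needs it globally, fed through Fenchel duality in the bidual pairing to close the duality gap of the regularized problem; boundedness of the net and weak* convergence of the primal coordinates then come from Goldstine-type weak* compactness, none of which appears in your sketch. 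In short: you have correctly proved the easy half and correctly diagnosed the crux of the hard half (norm convergence of the dual coordinates), but the hard half is not carried out, and the tool you name to carry it out is the very result in question.
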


 We next cite some properties regarding the \emph{uniqueness} of
 (maximally) monotone  extension of a maximally monotone operator to $X^{**} \times X^*$.
 Simons  showed that every maximally monotone operator of type (NI) is \emph{unique} in \cite{Si4}.
 Recently, Marques Alves and Svaiter contributed the following results:
 \begin{fact}[Marques Alves and Svaiter]\emph{(See
\cite[Theorem~1.6]{MarSva2}.)}
\label{MAS:UN1} Let $A:X \To X^*$ be a maximally  monotone linear relation
 that is not of type (D). Assume that $A$ is unique.
Then $\gra A=\dom F_A$.
\end{fact}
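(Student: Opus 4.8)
The plan is to prove the two inclusions separately, the easy one being $\gra A\subseteq\dom F_A$: by Fact~\ref{f:Fitz}, $F_A$ agrees with $\scal{\cdot}{\cdot}$ on $\gra A$ and is in particular finite there. The whole content is therefore the reverse inclusion $\dom F_A\subseteq\gra A$. First I would record the standard consequence of maximal monotonicity that $\gra A=\{(x,x^*)\mid F_A(x,x^*)=\scal{x}{x^*}\}$: indeed $F_A\geq\scal{\cdot}{\cdot}$ always (Fact~\ref{f:Fitz}), while $F_A(x,x^*)\leq\scal{x}{x^*}$ says exactly that $(x,x^*)$ is monotonically related to $\gra A$, hence lies in $\gra A$ by maximality. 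Thus the claim is equivalent to the implication $F_A(x,x^*)<+\infty\Rightarrow F_A(x,x^*)=\scal{x}{x^*}$.

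Next I would exploit that $A$ is a linear relation. Since $\gra A$ is a subspace containing $(0,0)$, monotonicity reduces to $\scal{a}{a^*}\geq0$ on $\gra A$, so $q(a,a^*):=\scal{a}{a^*}$ is a nonnegative quadratic form on $\gra A$ with symmetric bilinear form $b\bigl((a,a^*),(c,c^*)\bigr)=\tfrac12\bigl(\scal{a}{c^*}+\scal{c}{a^*}\bigr)$. For fixed $(x,x^*)$ the map $\ell_{(x,x^*)}\colon(a,a^*)\mapsto\scal{x}{a^*}+\scal{a}{x^*}$ is linear on $\gra A$, and an elementary completion of the square in the rescaling $(a,a^*)\mapsto t(a,a^*)$ inside the supremum defining $F_A$ (the one-dimensional computation underlying Fact~\ref{PF:1}) shows that $(x,x^*)\in\dom F_A$ exactly when $\ell_{(x,x^*)}$ is $q$-bounded, i.e.\ $\ell_{(x,x^*)}(v)^2\leq C\,q(v)$ on $\gra A$ for some $C$; in that case $F_A(x,x^*)=\tfrac14\sup_{q(v)>0}\ell_{(x,x^*)}(v)^2/q(v)$. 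This already shows that $\dom F_A$ is a linear subspace, and Fact~\ref{PF:1} identifies $\gra A$ as the subset where the sharp constant $C=4\scal{x}{x^*}$ may be taken. So the goal becomes: for a linear relation, $q$-boundedness of $\ell_{(x,x^*)}$ already forces the sharp constant.

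The heart of the matter is to rule out any $(x_0,x_0^*)\in\dom F_A\setminus\gra A$, a $q$-bounded functional $\ell_{(x_0,x_0^*)}$ for which the sharp Phelps--Simons constant cannot be attained, using uniqueness together with the failure of type (D). Here I would pass to $X^{**}\times X^*$ and the extended Fitzpatrick function $\widetilde F_A(x^{**},x^*):=\sup_{(a,a^*)\in\gra A}\bigl(\scal{a}{x^*}+\scal{a^*}{x^{**}}-\scal{a}{a^*}\bigr)$, which restricts to $F_A$ on $X\times X^*$. By Fact~\ref{PF:Su1} the failure of type (D) is the failure of type (NI): there is a bidual point at which $\widetilde F_A<\scal{\cdot}{\cdot}$, equivalently a point monotonically related to $\gra A$ in $X^{**}\times X^*$ that does not already lie in the ``Fitzpatrick'' set $\{\widetilde F_A=\scal{\cdot}{\cdot}\}$. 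The plan is to combine the room created by such a bidual point with the surplus datum $(x_0,x_0^*)$ to manufacture two distinct maximal monotone linear extensions of $A$ to $X^{**}\times X^*$ — by adjoining to $\gra A$ a suitable bidual representative of the functional that $(x_0,x_0^*)$ induces, in two incompatible monotone ways — and thereby to contradict the assumed uniqueness.

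The step I expect to be the genuine obstacle is exactly this construction of two distinct extensions from one surplus point of $\dom F_A$. One must show that the non-(NI) point supplies real freedom, i.e.\ that $\gra A$ enlarged by the data attached to $(x_0,x_0^*)$ can be completed to a maximal monotone linear relation in more than one way, that each completion is genuinely monotone and maximal, and that the two are distinct. The delicate feature is that $q$ controls the two coordinates only jointly and in mismatched topologies (weak$^*$ on $X^{**}$, norm on $X^*$), so the abstract representative of $\ell_{(x_0,x_0^*)}$ in the $q$-completion of $\gra A$ need not live in $X^{**}\times X^*$ at all; the non-type-(D) hypothesis is precisely what should break the would-be uniqueness of its realization, since for a type (D) operator the relevant net converges weak$^*\times$strong and the realization is forced, collapsing the construction. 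I would accordingly expect to rely throughout on Fact~\ref{PF:Su1} to move freely between types (D) and (NI), and on the quadratic reformulation behind Fact~\ref{PF:1} to keep the entire extension bookkeeping expressed through the single form $q$.
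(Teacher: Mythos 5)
Your preliminary reductions are sound: the identity $\gra A=\menge{(x,x^*)}{F_A(x,x^*)=\scal{x}{x^*}}$ follows from Fact~\ref{f:Fitz} and maximality exactly as you say, the quadratic reformulation via $q$ and $\ell_{(x,x^*)}$ correctly packages Fact~\ref{PF:1} (including the formula $F_A(x,x^*)=\tfrac14\sup_{q(v)>0}\ell_{(x,x^*)}(v)^2/q(v)$ and the linearity of $\dom F_A$), and the passage between type (D) and type (NI) via Fact~\ref{PF:Su1} is legitimate. But the proposal stops exactly where the theorem begins. Given $(x_0,x_0^*)\in\dom F_A\setminus\gra A$ and a non-(NI) point, you never actually construct the two distinct maximally monotone extensions: no candidate points of $X^{**}\times X^*$ are exhibited, their monotone relatedness to $\gra A$ is not verified, and the mutual incompatibility that would make the two Zorn-completions distinct is not demonstrated — you say only that you ``expect'' the non-(NI) point to supply the needed freedom. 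Consequently the uniqueness hypothesis is never used in any checkable step. That missing construction is not a technical detail; it is the entire content of \cite[Theorem~1.6]{MarSva2}, which this paper deliberately quotes as a Fact rather than reproving. A blind proof must supply precisely that argument, and yours does not.

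For what a completion would require: the bridge you lack is the elementary observation that whenever $F_A^*(w^*,w^{**})\leq\scal{w^{**}}{w^*}$, the point $(w^{**},w^*)$ is monotonically related to $\gra A$ — this follows from Fenchel--Young at graph points, where $F_A$ coincides with the duality pairing. This is what converts the non-(NI) point, together with suitable convex combinations involving data from $(x_0,x_0^*)\in\dom F_A$ (convexity of $F_A^*$ against the quadratic behaviour of the pairing along a segment), into genuine extension candidates; one then needs a one-parameter family of such candidates that are pairwise \emph{not} monotonically related to each other, so that Zorn's lemma produces two distinct maximally monotone extensions. Compare Proposition~\ref{ProJon1} in this paper, where exactly this mechanism is executed: the family $(\tfrac{1}{\alpha}x_0^{**},\alpha e)$, $0<\alpha<2$, with $\scal{x_0^{**}}{e}\geq 1$, is shown pointwise monotonically related to $\gra A$ yet pairwise incompatible, which is what breaks uniqueness. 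Two further cautions: insisting on \emph{linear} extensions, as your plan does, is both unnecessary (uniqueness quantifies over all maximally monotone extensions, so nonlinear ones suffice for the contradiction) and potentially harder to arrange; and the remark that the abstract $q$-representative of $\ell_{(x_0,x_0^*)}$ ``need not live in $X^{**}\times X^*$'' does not by itself create two realizations — absent the explicit incompatible family, nothing in the proposal rules out that the completion collapses to a single extension even for a non-(D) operator.
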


 \begin{fact}[Marques Alves and Svaiter]\emph{(See
\cite[Corollary~4.6]{MarSva}.)} \label{MAS:UNc2} Let $A:X \To X^*$
be a maximally  monotone operator such that $\gra A$ is not affine.
Then $A$ is of type (D) if and only if $A$ is unique.
\end{fact}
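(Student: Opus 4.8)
The plan is to establish the two implications separately; throughout I would invoke Fact~\ref{PF:Su1} to replace ``type (D)'' by the equivalent and more tractable ``type (NI)''. The forward direction, type (NI) $\Rightarrow$ unique, needs no hypothesis on $\gra A$. Starting from the Fitzpatrick function $F_A$ of Fact~\ref{f:Fitz}, which equals $\scal{\cdot}{\cdot}$ on $\gra A$ and dominates it elsewhere, one obtains $F_A^*(x^*,x^{**}) \geq \sup_{(a,a^*)\in\gra A}\big(\scal{a}{x^*}+\scal{a^*}{x^{**}}-\scal{a}{a^*}\big)$ for every $(x^{**},x^*)$, simply by restricting the defining supremum of the conjugate to the graph. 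Hence type (NI) forces $F_A^*(x^*,x^{**}) \geq \scal{x^{**}}{x^*}$ on all of $X^{**}\times X^*$. The swapped conjugate $G\colon (x^{**},x^*)\mapsto F_A^*(x^*,x^{**})$ is then a representative function (a BC--function with respect to the pairing of $X^{**}\times X^*$ with $X^*\times X^{**}$) of some maximally monotone extension of $A$ to the bidual. Because the representative of \emph{any} monotone extension of $A$ is squeezed between $\scal{\cdot}{\cdot}$ and $G$, that extension is pinned down uniquely; this is Simons' theorem, whose squeezing argument I would reproduce.

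For the converse I would argue by contraposition, proving that if $A$ is \emph{not} of type (NI) but is unique, then $\gra A$ is affine. Failure of type (NI) yields a witness $(\bar x^{**},\bar x^*)$ with $\scal{\bar x^{**}}{\bar x^*} > \sup_{(a,a^*)\in\gra A}\big(\scal{a}{\bar x^*}+\scal{a^*}{\bar x^{**}}-\scal{a}{a^*}\big)$; expanding the bidual pairing shows $(\bar x^{**},\bar x^*)$ is strictly monotonically related to $\gra A$, so it sits well inside the admissible set $M:=\menge{(x^{**},x^*)}{\scal{x^{**}-a}{x^*-a^*}\geq 0,\ \forall (a,a^*)\in\gra A}$ into which every maximally monotone extension of $A$ must fit. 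The core step is to convert uniqueness into the rigid identification $\gra A=\dom F_A$: uniqueness says all maximal monotone subsets of $M$ coincide, and the slack supplied by the witness is exactly what drives the Marques Alves--Svaiter argument underlying Fact~\ref{MAS:UN1}, here carried out without the linearity assumption. Once $\gra A=\dom F_A$ is in hand, $\gra A$ is the domain of the convex function $F_A$, hence convex; and a maximally monotone operator with convex graph is affine. Together with the forward direction, this proves the equivalence whenever $\gra A$ is not affine.

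The main obstacle is the converse, and within it the implication ``unique and not type (NI) $\Rightarrow \gra A=\dom F_A$''. For linear relations it is precisely Fact~\ref{MAS:UN1}, but dropping linearity forces one to work directly with the lattice of bidual extensions: given any point of $M$ not already common to all extensions, one must build two genuinely distinct maximal monotone extensions and thereby contradict uniqueness, unless that point already belongs to $\dom F_A=\gra A$. The delicate parts are carrying out this construction in the non-reflexive bidual---where only bounded nets enjoy weak$^*$ compactness---and verifying at the end that the convex monotone graph so produced admits no non-affine points.
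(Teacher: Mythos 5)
You should know at the outset that the paper contains no proof of this statement: it is imported verbatim as a Fact, with the proof delegated to Marques Alves and Svaiter \cite[Corollary~4.6]{MarSva}. So your proposal can only be measured against the literature route --- which it does shadow faithfully in outline. The forward implication is Simons' theorem that every operator of type (NI) is unique (the paper records this just before Fact~\ref{MAS:UN1}, citing \cite{Si4}), and your sketch of it via the swapped conjugate of $F_A$ dominating the duality pairing and squeezing every bidual extension is the standard argument; your observation that this half needs no hypothesis on $\gra A$ is also correct.

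The genuine gap is in the converse, and you have located it yourself without closing it. Your entire contrapositive argument funnels through the implication ``unique and not of type (NI) $\Rightarrow \gra A=\dom F_A$'', which within the paper is available only for \emph{linear relations} (Fact~\ref{MAS:UN1}); for a general maximally monotone $A$ this implication is precisely the substance of the cited Marques Alves--Svaiter work. You describe what would be required --- from a point of $\dom F_A\setminus\gra A$, manufacture two genuinely distinct maximally monotone extensions of $A$ in $X^{**}\times X^*$ --- but you never carry out the construction, and it is not routine: the paper's own Proposition~\ref{ProJon1} performs exactly this two-extension trick, via the one-parameter family $(\tfrac{1}{\alpha}x_0^{**},\alpha e)$ of monotonically related points and Zorn's lemma, but only in a special quadratic situation where $\langle x^*,x\rangle\geq\langle e,x\rangle^2$ and $e\notin\overline{\ran A}$; making an analogous construction work from an arbitrary point of $\dom F_A\setminus\gra A$ under mere failure of (NI) is the hard analytic core of the cited result, not a detail. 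In addition, your closing step --- that a maximally monotone operator with convex graph has affine graph, which is exactly where the non-affineness hypothesis is consumed --- is true but also left unproved; it needs its own argument, for instance showing that for $(a,a^*),(b,b^*)\in\gra A$ every point $(1-t)(a,a^*)+t(b,b^*)$ with $t\notin[0,1]$ remains monotonically related to $\gra A$ and then invoking maximality, and this verification is not immediate since a quadratic nonnegative on $[0,1]$ need not be nonnegative on all of $\RR$. As it stands, your proposal is a correct roadmap of the published proof with its two load-bearing lemmas asserted rather than established.
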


The Gossez operator defined as in Example~\ref{FPEX:1}\ref{BCCE:A6}
is a maximally monotone and unique operator that is not of type (D)
\cite{Gossez1}.

 The definition of operators of type (BR) directly yields
the following result.
\begin{fact}\label{BRFa:1}
Let $A:X\rightrightarrows X^*$ be maximally monotone
 and $(x,x^*)\in X\times X^*$. Assume that $A$
is of type (BR) and that
$\inf_{(a,a^*)\in\gra A}\langle x-a,x^*-a^*\rangle>-\infty$. Then
$x\in\overline{\dom A}$ and $x^*\in\overline{\ran A}$.
\end{fact}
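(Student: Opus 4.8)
The plan is to use the quantitative (BR) property directly, without invoking any of the earlier Facts. Write $c := \inf_{(a,a^*)\in\gra A}\langle x-a,x^*-a^*\rangle$; since $A$ is maximally monotone its graph is nonempty, so $c<+\infty$, and by hypothesis $c>-\infty$, hence $c$ is a finite real number. The crucial observation is that the (BR) hypothesis only asks for $\inf > -\alpha\beta$, i.e.\ it constrains the \emph{product} $\alpha\beta$ from below. This means we may shrink one of the two radii as much as we like provided we compensate by enlarging the other, and it is exactly this decoupling that lets us drive $b$ toward $x$ and $b^*$ toward $x^*$ independently.

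First I would establish $x\in\overline{\dom A}$. Fix $\varepsilon>0$ and set $\alpha:=\varepsilon$. I then choose $\beta>0$ large enough that $\alpha\beta>-c$: if $c\geq 0$ any $\beta>0$ works, while if $c<0$ it suffices to take $\beta>|c|/\varepsilon$. With this choice $\inf_{(a,a^*)\in\gra A}\langle x-a,x^*-a^*\rangle=c>-\alpha\beta$, so the (BR) property furnishes $(b,b^*)\in\gra A$ with $\|x-b\|<\alpha=\varepsilon$ and $\|x^*-b^*\|<\beta$. In particular $b\in\dom A$ and $\|x-b\|<\varepsilon$; as $\varepsilon>0$ was arbitrary, $x\in\overline{\dom A}$.

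The argument for $x^*\in\overline{\ran A}$ is entirely symmetric, with the roles of the two radii interchanged. Here I fix $\varepsilon>0$, set $\beta:=\varepsilon$, and choose $\alpha>0$ large enough that $\alpha\beta>-c$ (again $\alpha=1$ if $c\geq 0$, and $\alpha>|c|/\varepsilon$ otherwise). The (BR) property then yields $(b,b^*)\in\gra A$ with $\|x^*-b^*\|<\beta=\varepsilon$, so $b^*\in\ran A$ lies within $\varepsilon$ of $x^*$, and therefore $x^*\in\overline{\ran A}$.

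The only real subtlety is the bookkeeping in the selection of $\alpha$ and $\beta$: one must recognize that, because merely the product $\alpha\beta$ has to exceed $-c$, contracting one factor can always be offset by inflating the other. Once this is noticed the conclusion follows immediately from the definition of type (BR), which is why the statement is recorded as a Fact that the definition ``directly yields.''
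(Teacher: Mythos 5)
Your proof is correct and is exactly the argument the paper has in mind: the paper records this as a Fact that the definition of type (BR) ``directly yields'' and omits the details, and your observation that only the product $\alpha\beta$ is constrained---so one radius can be shrunk to $\varepsilon$ while the other is inflated past $|c|/\varepsilon$---is precisely the intended one-line justification. Nothing is missing; the bookkeeping with $c$ finite (nonempty graph gives $c<+\infty$, the hypothesis gives $c>-\infty$) is handled correctly in both the domain and range cases.
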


Additionally,

 \begin{fact}[Marques Alves and Svaiter]\emph{(See
\cite[Theorem~1.4(4)]{MarSva2} or \cite{MarSva3}.)} \label{MAS:BR1} Let $A:X \To X^*$
be a maximally  monotone operator. Assume that $A$ is of type (NI).
Then $A$ is of type (BR).
\end{fact}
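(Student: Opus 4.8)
The plan is to translate everything into the language of the Fitzpatrick function $F_A$ (Fact~\ref{f:Fitz}) and then drive a Br{\o}ndsted--Rockafellar/Ekeland argument. First I would record two reformulations. Expanding the pairing gives the identity $\inf_{(a,a^*)\in\gra A}\scal{x-a}{x^*-a^*}=\scal{x}{x^*}-F_A(x,x^*)$, so the hypothesis of type (BR) at $(x,x^*,\alpha,\beta)$ is exactly $F_A(x,x^*)-\scal{x}{x^*}<\alpha\beta$. On the dual side, restricting the supremum defining $F_A^*$ to $\gra A$, where $F_A=\scal{\cdot}{\cdot}$, yields $F_A^*(x^*,x^{**})\geq\sup_{(a,a^*)\in\gra A}\big(\scal{a}{x^*}+\scal{a^*}{x^{**}}-\scal{a}{a^*}\big)$, whose right-hand side is precisely the left-hand side of the type (NI) inequality; hence type (NI) delivers $F_A^*(x^*,x^{**})\geq\scal{x^{**}}{x^*}$ for all $(x^*,x^{**})\in X^*\times X^{**}$. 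Finally, since $F_A\geq\scal{\cdot}{\cdot}$ with equality exactly on $\gra A$, the conclusion of type (BR) amounts to locating, close to $(x,x^*)$, a point at which this gap vanishes.

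Next I would make two harmless reductions. Translating $\gra A$ by $(x,x^*)$ preserves maximal monotonicity, type (NI), and all of the quantities above, so I may assume $(x,x^*)=(0,0)$; the hypothesis becomes $0\leq F_A(0,0)<\alpha\beta$ (the lower bound coming from (NI)). Replacing the norm of $X$ by an equivalent multiple $t\|\cdot\|$ leaves $F_A$, the pairing, maximal monotonicity, and type (NI) untouched while scaling the two target radii oppositely, so I may take $\alpha=\beta=:r$ with the product $r^2$ unchanged. The task is then: given $F_A(0,0)<r^2$, produce $(b,b^*)\in\gra A$ with $\|b\|<r$ and $\|b^*\|<r$.

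The engine works on the Banach space $E:=X\times X^*$, where $F_A$ is proper, lower semicontinuous and convex and $g:=F_A-\scal{\cdot}{\cdot}\geq 0$ is lower semicontinuous (the bilinear form being continuous). Applying a Br{\o}ndsted--Rockafellar/Ekeland step to $F_A$ based at the origin, with the value $F_A(0,0)<r^2$ certifying the approximate-subgradient data, returns a point $(\bar b,\bar b^*)$ near the origin together with an \emph{exact} subgradient $(s_1,s_2)\in\partial F_A(\bar b,\bar b^*)\subseteq X^*\times X^{**}$ of the swapped-plus-perturbation form $(s_1,s_2)=(\bar b^*,\bar b)-(e_1,e_2)$ with $\|e_1\|+\|e_2\|$ small. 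Writing out the Fenchel--Young equality and substituting the (NI) reformulation $F_A^*(s_1,s_2)\geq\scal{s_2}{s_1}$, I expect a cascade of cancellations to collapse the estimate to $g(\bar b,\bar b^*)\leq-\scal{e_2}{e_1}\leq\|e_2\|\,\|e_1\|$; that is, the gap at $(\bar b,\bar b^*)$ is bounded solely by the product of the two small perturbation norms. This is the decisive use of type (NI): the subgradient genuinely escapes into $X^{**}$ (its component $s_2=\bar b-e_2$ need not lie in $X$), and only the bidual bound $F_A^*\geq\scal{\cdot}{\cdot}$ prevents that escape from opening a positive Fenchel--Young gap. This is exactly the step where the full strength of (NI) enters, as opposed to the universally valid $F_A\geq\scal{\cdot}{\cdot}$, consistent with the fact that type (BR) can fail for non-(NI) operators.

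The main obstacle, and the heart of the argument, is upgrading this \emph{approximate} graph point to an \emph{exact} one while keeping the total displacement strictly below $r$. I would iterate: at each stage apply the step with a parameter chosen so that the gap $g$ contracts geometrically, while the displacements form a summable series. By completeness of $E$ the iterates converge to some $(b,b^*)$, and lower semicontinuity of $g$ together with $g\to 0$ forces $g(b,b^*)=0$, i.e. $(b,b^*)\in\gra A$. The delicate point is the bookkeeping: a crude estimate of the Ekeland constant only secures total displacement below $2r$, so to land inside the \emph{open} product of radius-$r$ balls one must run the iteration with the sharp Br{\o}ndsted--Rockafellar constants (whose norm bounds multiply exactly to the gap), balancing the contraction rate against the step sizes. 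Once this quantitative control is in place, the limiting point $(b,b^*)$ is the required element of $\gra A$, which establishes type (BR). (This also recovers, in passing, that $x\in\overline{\dom A}$ and $x^*\in\overline{\ran A}$, in line with Fact~\ref{BRFa:1}.)
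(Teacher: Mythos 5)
The paper does not actually prove Fact~\ref{MAS:BR1}: it is imported verbatim from Marques Alves and Svaiter \cite{MarSva2,MarSva3}, so the benchmark is their original argument, and your outline does follow its broad strategy. Your two reformulations are correct: $\inf_{(a,a^*)\in\gra A}\scal{x-a}{x^*-a^*}=\scal{x}{x^*}-F_A(x,x^*)$, and type (NI) is equivalent to $F_A^*(x^*,x^{**})\geq\scal{x^{**}}{x^*}$ on $X^*\times X^{**}$. The reductions (translation; norm rescaling to equalize $\alpha=\beta=r$) are sound. One phrasing must be corrected, though: you cannot literally ``apply a Br{\o}ndsted--Rockafellar step to $F_A$ based at the origin'', since that would require $(0,0)\in\partial_\varepsilon F_A(0,0)$, i.e.\ a bound on $F_A(0,0)+F_A^*(0,0)$, and $F_A^*(0,0)=-\inf F_A$ may well be $+\infty$. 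What works --- and what your own setup with $g:=F_A-\scal{\cdot}{\cdot}\geq 0$ suggests --- is Ekeland's principle applied to $g$, whose smooth bilinear part contributes the swapped gradient; this yields $(s_1,s_2)=(\bar b^*,\bar b)-(e_1,e_2)\in\partial F_A(\bar b,\bar b^*)$ with $(e_1,e_2)$ small, and your central cancellation is then verifiably correct: Fenchel--Young equality plus $F_A^*\geq\scal{\cdot}{\cdot}$ collapses to $g(\bar b,\bar b^*)\leq-\scal{e_2}{e_1}\leq\|e_1\|\,\|e_2\|$. That is indeed exactly where (NI) is indispensable.

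The genuine gap is the quantitative step you flag but do not close, and it is not mere bookkeeping. One Ekeland step with slope $\varepsilon$ gives per-coordinate displacement at most $\delta/\varepsilon$ and, after the arithmetic--geometric splitting of $\|e_1\|\,\|e_2\|$, a new gap at most $\varepsilon^2/4$; iterating with budgets $\lambda_k$ gives the recursion $\delta_k\leq\delta_{k-1}^2/(4\lambda_k^2)$. Minimizing $\sum_k\lambda_k$ subject to $\delta_k\to 0$ (substitute $s_k=\sqrt{\delta_k}$, so $\lambda_k=s_{k-1}^2/(2s_k)$) has optimal value exactly $2\sqrt{\delta_0}$, attained by the geometric schedule with ratio $1/2$; no choice of parameters in this class of schemes does better. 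Since $\delta_0<r^2$ only gives $2\sqrt{\delta_0}<2r$, your iteration as described proves only the weakened statement ``gap $<\alpha\beta/4$ implies a graph point within $(\alpha,\beta)$'', and the strict inequality in the hypothesis supplies epsilon-slack, not a factor of $2$. Nor do the ``sharp classical BR constants'' you invoke transplant: the classical sharp bounds apply in a single step to a \emph{fixed} $\varepsilon$-subgradient of a convex function, whereas here $(x_0^*,x_0)\in\partial_\varepsilon F_A(x_0,x_0^*)$ is false in general (it would force $F_A^*(x_0^*,x_0)\leq\scal{x_0}{x_0^*}+\varepsilon$, which is unavailable --- the same issue as above). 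Closing this factor of two --- for instance by coupling the Ekeland descent inequality $g(p_k)\leq g(p_{k-1})-\varepsilon_k\|p_k-p_{k-1}\|$ to the gap estimate instead of using the two bounds independently --- is precisely where the technical content of the Marques Alves--Svaiter proof lies, so the proposal is a correct skeleton of the known argument with its decisive step missing.
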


We shall also need some precise results about linear relations. The
 first two are elementary.

\begin{fact}[Cross]\label{Rea:1}
\emph{(See \cite[Proposition~I.2.8(a)]{Cross}.)} Let $A \colon X \To
Y$ be a linear relation. Then $(\forall (x,x^*)\in\gra A)$ $Ax=x^*
+A0$.
\end{fact}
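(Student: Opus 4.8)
The plan is to exploit the single defining feature of a linear relation, namely that $\gra A$ is a linear subspace of $X\times Y$, and to prove the set equality $Ax=x^*+A0$ by the two obvious inclusions. Recall that $A0=\{y\mid(0,y)\in\gra A\}$ and that $Ax=\{y\mid(x,y)\in\gra A\}$, so the entire statement is really a statement about the fibres of the subspace $\gra A$ over the points $0$ and $x$.

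First I would fix $(x,x^*)\in\gra A$ and establish the inclusion $Ax\subseteq x^*+A0$. Given any $z^*\in Ax$, we have $(x,z^*)\in\gra A$; subtracting the element $(x,x^*)\in\gra A$ and using closure of the subspace $\gra A$ under subtraction yields $(0,z^*-x^*)\in\gra A$, i.e.\ $z^*-x^*\in A0$, whence $z^*\in x^*+A0$. For the reverse inclusion $x^*+A0\subseteq Ax$, I would take an arbitrary $w^*=x^*+u^*$ with $u^*\in A0$, so that $(0,u^*)\in\gra A$; adding $(x,x^*)\in\gra A$ and again invoking closure under addition gives $(x,x^*+u^*)=(x,w^*)\in\gra A$, i.e.\ $w^*\in Ax$. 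Combining the two inclusions gives the claimed equality, and since $(x,x^*)$ was arbitrary in $\gra A$ the universally quantified conclusion follows.

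I do not expect any genuine obstacle here: the result is purely algebraic and uses nothing beyond the subspace axioms for $\gra A$, with no topology, monotonicity, or duality entering. The only point that warrants a word of care is the notation, since the symbol $x^*$ in the statement denotes a generic element of $Y$ (the codomain) rather than a functional, and one must read $A0$ as the fibre over the origin, i.e.\ the ``linear part at zero'' of the relation. Once that reading is in place, the proof is a one-line double inclusion, which is presumably why Cross records it as an elementary fact.
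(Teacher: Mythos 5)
Your proof is correct and complete: the double inclusion via closure of $\gra A$ under addition and subtraction is exactly the right argument, and your remark that $x^*$ here is just a generic element of the codomain $Y$ (so that $A0$ is the fibre of $\gra A$ over the origin) is the only notational subtlety worth flagging. The paper itself supplies no proof of this statement—it is cited directly from Cross \cite[Proposition~I.2.8(a)]{Cross}—and your argument is precisely the standard elementary one, so there is nothing to compare beyond noting agreement.
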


\begin{lemma}
\label{PF:A1} Let $A:X\rightrightarrows X^*$ be a linear relation.
Assume that $A^*$ is monotone. Then $\ker A^*\subseteq(\ran
A^*)^{\bot}$.
\end{lemma}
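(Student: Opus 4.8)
The plan is to exploit the fact that the adjoint of a linear relation is again a linear relation, so that $\gra A^*$ is a linear subspace of $X^{**}\times X^*$ containing the origin; combined with the assumed monotonicity of $A^*$ this will force a vanishing of a certain pairing. First I would pin down the spaces involved: $A^*$ acts from $X^{**}$ to $X^*$, its kernel $\ker A^*=\{x^{**}\in X^{**}\mid(x^{**},0)\in\gra A^*\}$ lies in $X^{**}$, its range $\ran A^*$ lies in $X^*$, and $(\ran A^*)^{\bot}$ --- formed by regarding $X^*$ as the base space with dual $X^{**}$ --- is a subset of $X^{**}$. Thus the asserted inclusion is between two subsets of $X^{**}$, and unwinding the annihilator it amounts to proving $\langle x^{**},x^*\rangle=0$ for every $x^{**}\in\ker A^*$ and every $x^*\in\ran A^*$.

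Next I would fix $x^{**}\in\ker A^*$, so that $(x^{**},0)\in\gra A^*$, together with an arbitrary $x^*\in\ran A^*$, choosing some $y^{**}\in X^{**}$ with $(y^{**},x^*)\in\gra A^*$. Since $\gra A$ is a subspace, its annihilator $(\gra A)^{\bot}$ is a subspace, and the defining relabelling $(x^{**},x^*)\mapsto(x^*,-x^{**})$ is linear; hence $\gra A^*$ is itself a linear subspace. In particular $(0,0)\in\gra A^*$, and for every $\lambda\in\RR$ the combination $(x^{**}+\lambda y^{**},\,\lambda x^*)=(x^{**},0)+\lambda(y^{**},x^*)$ again belongs to $\gra A^*$.

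Now I would bring in monotonicity. Applying monotonicity of $A^*$ to the pair $(x^{**}+\lambda y^{**},\lambda x^*)$ and $(0,0)$ in $\gra A^*$ yields $\langle x^{**}+\lambda y^{**},\lambda x^*\rangle\ge0$, that is $\lambda\langle x^{**},x^*\rangle+\lambda^2\langle y^{**},x^*\rangle\ge0$ for all $\lambda\in\RR$. This is a quadratic in $\lambda$ that is nonnegative everywhere and equals $0$ at $\lambda=0$, so $\lambda=0$ is a global minimizer and the linear coefficient must vanish, giving $\langle x^{**},x^*\rangle=0$. As $x^*\in\ran A^*$ was arbitrary, we conclude $x^{**}\in(\ran A^*)^{\bot}$, which is the desired inclusion.

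I do not expect a serious obstacle: this is a routine \emph{linear relation plus monotonicity} computation, of the same flavour as Fact~\ref{PF:1}. The only point demanding care is the bookkeeping of spaces and pairings --- confirming that $\ker A^*\subseteq X^{**}$, that $(\ran A^*)^{\bot}$ is taken with $X^*$ as primal space so that it sits in $X^{**}$, and that the monotonicity pairing for $A^*$ is the canonical $X^{**}\times X^*$ duality. Once these identifications are fixed, the sign analysis of the quadratic closes the argument immediately.
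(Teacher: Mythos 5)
Your proof is correct and follows essentially the same route as the paper: both exploit that $\gra A^*$ is a linear subspace to scale elements and then apply monotonicity against $(0,0)$ to force the cross-pairing $\langle x^{**},x^*\rangle$ to vanish. The only cosmetic difference is the parametrization --- the paper scales the kernel element, obtaining an expression affine in the parameter whose linear coefficient must vanish, while you scale the other graph element and analyze a nonnegative quadratic vanishing at $\lambda=0$; the two sign arguments are interchangeable.
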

\begin{proof}
Let $x^{**}\in\ker A^*$ and then $(\alpha x^{**},0)\in\gra A^*,
\forall \alpha\in\RR$. Then
\begin{align*}0&\leq\langle \alpha x^{**}+y^{**}, y^*\rangle
=\alpha\langle x^{**}, y^{*}\rangle+\langle y^{**}, y^*\rangle,
\quad \forall (y^{**},y^*)\in\gra A^*,
\forall\alpha\in\RR.\end{align*} Hence $\langle x^{**},
y^*\rangle=0,\quad
  \forall (y^{**},y^*)\in\gra A^*$ and thus $x^{**}\in(\ran A^*)^{\bot}$.
Thus $\ker A^*\subseteq(\ran A^*)^{\bot}$.
\end{proof}

\begin{fact}\emph{(See \cite[Theorem~3.1]{BBWY:1}.)}\label{TypeDe:1}
Let $A:X\rightrightarrows X^*$ be a maximally monotone linear relation.  Then
$A$ is of type (D)
if and only if $A^*$ is monotone.
\end{fact}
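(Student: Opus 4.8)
The plan is to route the equivalence through type (NI), which by Fact~\ref{PF:Su1} coincides with type (D). The first step is a scaling-based reformulation of (NI). Expanding the supremum in the definition of (NI) and subtracting the pairing, one has, for every $(a,a^*)\in\gra A$,
\[
\langle a,x^*\rangle+\langle a^*,x^{**}\rangle-\langle a,a^*\rangle-\langle x^{**},x^*\rangle=-\langle a-x^{**},a^*-x^*\rangle,
\]
so that $A$ is of type (NI) if and only if
\[
\inf_{(a,a^*)\in\gra A}\langle a-x^{**},a^*-x^*\rangle\leq 0\qquad\forall(x^{**},x^*)\in X^{**}\times X^*.
\]
Since $\gra A$ is a subspace containing $(0,0)$ and $A$ is monotone, $\langle a,a^*\rangle\geq0$ throughout $\gra A$ with infimum $0$; I will use this normalization repeatedly.

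For the implication that (NI) forces $A^*$ to be monotone (equivalently, (D)$\Rightarrow A^*$ monotone), let $(x^{**},x^*)\in\gra A^*$, so that $\langle a,x^*\rangle=\langle x^{**},a^*\rangle$ for every $(a,a^*)\in\gra A$. Applying the reformulated condition at the point $(-x^{**},x^*)$ and substituting this identity, the cross terms cancel and
\[
\langle a-(-x^{**}),a^*-x^*\rangle=\langle a,a^*\rangle-\langle x^{**},x^*\rangle,
\]
whose infimum over $\gra A$ equals $-\langle x^{**},x^*\rangle$. The (NI) inequality then reads $-\langle x^{**},x^*\rangle\leq0$, i.e.\ $\langle x^{**},x^*\rangle\geq0$; as $(x^{**},x^*)\in\gra A^*$ was arbitrary, $A^*$ is monotone. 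This direction is short and self-contained.

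The substantive direction is that $A^*$ monotone implies (NI), which I would prove by contraposition. Assume (NI) fails at some $(x^{**},x^*)$, so the infimum above is strictly positive and hence $\langle x^{**},x^*\rangle>0$. Maximality of $A$ first excludes $x^{**}\in X$: were $x^{**}=x\in X$, the pair $(x,x^*)$ would be monotonically related to $\gra A$, hence lie in $\gra A$, forcing the infimum to be $0$. Thus the obstruction is genuinely in the bidual. Since $(x^{**},x^*)$ is monotonically related to $\gra A$, the bidual analogue of Fact~\ref{PF:1} (its proof uses only the scalar \cite[Lemma~2.1]{PheSim} and so transfers verbatim to $X^{**}\times X^*$) gives
\[
\big[\langle a,x^*\rangle+\langle x^{**},a^*\rangle\big]^2\leq 4\langle x^{**},x^*\rangle\,\langle a,a^*\rangle\qquad\forall(a,a^*)\in\gra A.
\]
Hence the linear functional $\ell(a,a^*):=\langle a,x^*\rangle+\langle x^{**},a^*\rangle$ on $\gra A$ is dominated by the seminorm $\sqrt{\langle a,a^*\rangle}$ carried by the symmetric part of $A$, and in particular vanishes on $\{(a,a^*)\in\gra A:\langle a,a^*\rangle=0\}$. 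The aim is to upgrade this controlled, nonzero functional into a genuine pair in $\gra A^*$ whose pairing is negative, contradicting monotonicity of $A^*$; Lemma~\ref{PF:A1} (yielding $\ker A^*\subseteq(\ran A^*)^{\bot}$) is the structural input that links the seminorm data back to the adjoint.

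The hard part will be exactly this last conversion: manufacturing from $\ell$ an element of $\gra A^*$ on which the pairing is negative. The difficulty is that $\ell$ is only controlled by the possibly degenerate seminorm associated with $\langle a,a^*\rangle$, so the natural Riesz/Hahn--Banach representative lives in an abstract completion rather than in $X^{**}\times X^*$, and transporting it back to an honest adjoint pair requires the boundedness and weak$^*$ compactness furnished by the type (D)/(NI) machinery together with $\ker A^*\subseteq(\ran A^*)^{\bot}$. I expect the cleanest packaging is to phrase the contrapositive geometrically: failure of (NI) is equivalent to non-maximality of the weak$^*\times$strong closure of $\gra A$ in $X^{**}\times X^*$, and the offending adjoint element should be read off along the resulting non-maximality direction, so that monotonicity of $A^*$ is contradicted.
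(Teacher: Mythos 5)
Your first half is sound: the reformulation of type (NI) as $\inf_{(a,a^*)\in\gra A}\langle a-x^{**},a^*-x^*\rangle\leq 0$ for all $(x^{**},x^*)$, and the implication (NI) $\Rightarrow$ $A^*$ monotone obtained by testing at $(-x^{**},x^*)$ for $(x^{**},x^*)\in\gra A^*$, are correct; since $\gra A$ and $\gra A^*$ are subspaces, monotonicity of $A^*$ does reduce to $\langle x^{**},x^*\rangle\geq 0$ on $\gra A^*$, and the cancellation you perform is exactly right. The contrapositive set-up for the converse is also fine as far as it goes: failure of (NI) yields $(x^{**},x^*)$ with $\langle x^{**},x^*\rangle>0$, $x^{**}\notin X$, and the bidual transfer of Fact~\ref{PF:1} gives $\big[\langle a,x^*\rangle+\langle x^{**},a^*\rangle\big]^2\leq 4\langle x^{**},x^*\rangle\langle a,a^*\rangle$ on $\gra A$, since the quadratic-in-$\lambda$ argument in the proof of Fact~\ref{PF:1} only needs $\gra A$ to be a subspace.

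The genuine gap is that the substantive direction --- $A^*$ monotone $\Rightarrow$ (NI) --- is never actually proved: your proposal stops at precisely the decisive step and replaces it with a plan that cannot work as stated. To contradict monotonicity of $A^*$ you must exhibit a concrete pair $(z^{**},z^*)\in\gra A^*$ with $\langle z^{**},z^*\rangle<0$, and nothing in the sketch produces one. The functional $\ell(a,a^*)=\langle a,x^*\rangle+\langle x^{**},a^*\rangle$ is dominated by the possibly degenerate seminorm $2\sqrt{\langle x^{**},x^*\rangle\,\langle a,a^*\rangle}$, but a Hahn--Banach or Riesz representative with respect to this seminorm lives in an abstract completion of $\gra A$, and there is no mechanism offered for landing it in $X^{**}\times X^*$, let alone in $\gra A^*$ with negative self-pairing; compare Theorem~\ref{PBABD:2}\ref{PBAB:emma1}, where the non-monotonicity of $T^*$ is witnessed by pairs $(Sx^*+re,x^*)$ whose negativity comes entirely from the extra direction $re$, which is invisible to $\ell$. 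Worse, your appeal to ``the boundedness and weak$^*$ compactness furnished by the type (D)/(NI) machinery'' is circular --- (NI) for $A$ is the conclusion, and the only available hypothesis is monotonicity of $A^*$ --- and Lemma~\ref{PF:A1} cannot serve as the bridge because it presupposes elements of $\ker A^*$ that you have not constructed. Finally, the proposed ``cleanest packaging'' (failure of (NI) is equivalent to non-maximality of the weak$^*\times$strong closure of $\gra A$ in $X^{**}\times X^*$) is asserted without proof and is essentially a restatement of the type (D) property itself (cf.\ Definition~\ref{def1}), so it relocates rather than removes the difficulty. For what it is worth, the present paper does not prove Fact~\ref{TypeDe:1} at all: it is imported from \cite{BBWY:1}, where the implication from monotonicity of the adjoint to type (NI)/(D) is the main content and is established by a substantively different argument based on an explicit analysis of the Fitzpatrick function of a linear relation, not by the soft separation/completion considerations sketched here.
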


\begin{fact}\emph{(See \cite[Theorem~3.1]{Yao2}.)}
\label{lisum:1}Let $A:X\To X^*$ be a maximally monotone linear relation,
and let $f:X\rightarrow \RX$ be a proper lower semicontinuous convex function
with $\dom A\cap\inte\dom \partial f\neq\varnothing$.  Then
$A+\partial f$ is maximally monotone.\end{fact}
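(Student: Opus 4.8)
The operator $A+\partial f$ is plainly monotone, being a sum of two monotone operators, so the entire content is maximality. My plan is to exhibit a single representative BC--function whose associated operator is maximally monotone and coincides with $A+\partial f$. Since $f$ is proper lower semicontinuous convex, $\partial f$ is maximally monotone, so both $F_A$ and the Fenchel--Young function $H\colon(x,x^*)\mapsto f(x)+f^*(x^*)$ are BC--functions representing $A$ and $\partial f$ respectively (Fact~\ref{f:Fitz}, together with Fitzpatrick's identity $\gra A=\{(x,x^*):F_A(x,x^*)=\scal{x}{x^*}\}$ and the analogous statement $\gra\partial f=\{(x,x^*):H(x,x^*)=\scal{x}{x^*}\}$). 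I would form the partial inf-convolution in the dual variable,
\[G:=F_A\,\Box_2\,H,\qquad G(x,x^*)=f(x)+\inf_{u^*\in X^*}\big[F_A(x,u^*)+f^*(x^*-u^*)\big].\]
First I would record that $G\ge\scal{\cdot}{\cdot}$ everywhere, which is immediate from $F_A\ge\scal{\cdot}{\cdot}$ and the Fenchel--Young inequality, and that $G$ is proper and convex (passing to its lower semicontinuous hull if necessary, which preserves both facts).

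The next step is the conjugate computation, and this is where the constraint qualification enters. Since $z\in\dom A$ and $z\in\inte\dom\partial f\subseteq\inte\dom f$, a neighbourhood of $0$ lies in $\dom A-\dom f$; as $\dom A\subseteq P_X\dom F_A$ and $\dom f=P_X\dom H$, we obtain $0\in\inte[P_X\dom F_A-P_X\dom H]$, so $\bigcup_{\lambda>0}\lambda[P_X\dom F_A-P_X\dom H]=X$ is a closed subspace. Hence Fact~\ref{F4} applies and gives, for $x\in X\subseteq X^{**}$,
\[G^*(x^*,x)=\min_{u^*\in X^*}\big[F_A^*(x^*-u^*,x)+f^*(u^*)+f(x)\big].\]
Bounding $F_A^*(x^*-u^*,x)\ge\scal{x}{x^*-u^*}$ (the BC property of $F_A$) and $f^*(u^*)+f(x)\ge\scal{x}{u^*}$ (Fenchel--Young), and adding, shows $G^*(x^*,x)\ge\scal{x}{x^*}$. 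Thus $G$ is a BC--function, and by the standard criterion that a BC--function represents a maximally monotone operator, the set $M:=\{(x,x^*):G(x,x^*)=\scal{x}{x^*}\}$ is the graph of a maximally monotone operator. That $\gra(A+\partial f)\subseteq M$ is easy: for $(x,x^*)$ with $x^*=a^*+v^*$, $a^*\in Ax$, $v^*\in\partial f(x)$, one has $G(x,x^*)\le F_A(x,a^*)+H(x,v^*)=\scal{x}{a^*}+\scal{x}{v^*}=\scal{x}{x^*}$.

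It remains to prove the reverse inclusion $M\subseteq\gra(A+\partial f)$, and this is the main obstacle. If $(x,x^*)\in M$ and the defining infimum is attained at some $u^*$, then $F_A(x,u^*)+f^*(x^*-u^*)+f(x)=\scal{x}{x^*}$ forces, since the two one-sided bounds above sum to an equality, that $F_A(x,u^*)=\scal{x}{u^*}$ and $f(x)+f^*(x^*-u^*)=\scal{x}{x^*-u^*}$; Fitzpatrick's identity then yields $u^*\in Ax$ and $x^*-u^*\in\partial f(x)$, whence $x^*\in Ax+\partial f(x)$. The delicate point in the non-reflexive setting is exactly the attainment of this infimum (exactness of the inf-convolution) at every point of $M$, for the constraint qualification as used above controls only the \emph{dual} object $G^*$. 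This is precisely where I expect the linear-relation hypothesis on $A$ to be indispensable: the quadratic form of the monotone-relatedness condition for linear relations (Fact~\ref{PF:1}), combined with the constraint qualification, should allow one to produce the minimizer $u^*$ and close the argument. Once $M=\gra(A+\partial f)$ is established, maximal monotonicity of $A+\partial f$ is inherited from that of $M$.
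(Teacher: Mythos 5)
You should first note that the paper does not prove this statement: it is quoted as a Fact from \cite[Theorem~3.1]{Yao2}, and the cited proof is a direct one --- it takes $(z,z^*)$ monotonically related to $\gra(A+\partial f)$ and shows membership in the graph, exploiting the linear-relation structure (quadratic inequalities as in Fact~\ref{PF:1}), $\varepsilon$-subdifferential/Br{\o}nsted--Rockafellar arguments for $f$, and the maximal monotonicity of $A$ and $\partial f$ --- rather than the convex-representer scheme you propose. Measured on its own terms, your proposal establishes correctly the easy half: monotonicity, the constraint-qualification verification $\bigcup_{\lambda>0}\lambda[P_X\dom F_A-P_X\dom(f\oplus f^*)]=X$, the BC property of $G=F_A\,\Box_2\,(f\oplus f^*)$ via Fact~\ref{F4}, and $\gra(A+\partial f)\subseteq M$. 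But it is not a proof, for two reasons. First, the ``standard criterion that a BC--function represents a maximally monotone operator'' is a reflexive-space theorem (Burachik--Svaiter; Simons' SSDB framework in \cite{Si2}). In a nonreflexive space the available analogue (Marques Alves--Svaiter \cite{MarSva3}) requires $G$ lower semicontinuous and $G^*\geq\langle\cdot,\cdot\rangle$ on \emph{all} of $X^*\times X^{**}$, whereas your computation yields the inequality only at points $(x^*,x)$ with $x\in X$ --- and the stronger bidual inequality is unattainable in the generality needed, since Fact~\ref{lisum:1} is invoked in Theorem~\ref{PBABD:2}\ref{BCC:0a2} precisely for sums $T+\partial f$ that items \ref{BCC:3}\&\ref{BCC:4} show are \emph{not} of type (NI). (Your lsc-hull manoeuvre also enlarges the equality set $M$, which works against the inclusion you need next.)

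Second, and fatally, the reverse inclusion $M\subseteq\gra(A+\partial f)$ is left as a hope. You correctly identify that it amounts to exactness of the primal infimum defining $G$ at every equality point, you correctly observe that Fact~\ref{F4} delivers attainment only in the dual formula for $G^*$, and then you write that the linear-relation hypothesis ``should allow one to produce the minimizer $u^*$'' --- but no mechanism is given, and none is routine: this exactness is essentially as hard as the maximality being proved, which is exactly why the sum problem remains open in nonreflexive spaces despite the BC-machinery. Indeed, the present paper's central negative results (Theorem~\ref{PBABD:2}\ref{BCC:2} and Example~\ref{FPEX:1}\ref{BCCE:A2}\&\ref{BCCE:A4}, answering Simons' Problem~22.12) are a warning that partial inf-convolutions of BC--functions behave badly outside reflexivity. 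So the skeleton is sensible and the soft steps are sound, but the two load-bearing steps --- a maximality criterion valid under the weak BC condition, and exactness at equality points --- are missing, and the route you chose runs head-on into the nonreflexive pathologies this paper is about; the direct argument of \cite{Yao2} avoids representers for precisely this reason.
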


\begin{fact}[Simons]\label{Satu:1}\emph{(See \cite[Theorem~28.9]{Si2}.)}
 Let $Y$ be a Banach space, and $L:Y\rightarrow X$ be continuous and linear with $\ran L$ closed and
  $\ran L^*=Y^*$.
 Let $A:X\rightrightarrows X^*$ be  monotone
 with $\dom A\subseteq\ran L$ such that $\gra A\neq\varnothing$.
  Then $A$ is maximally monotone
 if, and only if $A$ is $\ran L$--saturated and $L^*AL$ is maximally monotone.
\end{fact}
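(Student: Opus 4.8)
The plan is to prove both implications directly from the definition of maximal monotonicity, exploiting the identity $(\ran L)^\bot=\ker L^*$, so that ``$A$ is $\ran L$--saturated'' means precisely $Ax+\ker L^*=Ax$ for every $x\in\dom A$, together with the fact that $\ran L^*=Y^*$ forces $L$ to be injective with closed range (a bounded-below operator, hence an isomorphism onto $\ran L$). Throughout I would transport monotone inequalities between $X\times X^*$ and $Y\times Y^*$ via the adjoint relation $\langle Lu,x^*\rangle=\langle u,L^*x^*\rangle$, using the key observation that every point of $\dom A$ lies in $\ran L$, so that any functional in $\ker L^*=(\ran L)^\bot$ annihilates all differences of points of $\dom A$.

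For the forward implication, assume $A$ is maximally monotone. To obtain saturation, fix $(x,x^*)\in\gra A$ and $s^*\in\ker L^*$; since both $x$ and any $y$ with $(y,y^*)\in\gra A$ lie in $\dom A\subseteq\ran L$, we get $\langle x-y,s^*\rangle=0$, so $(x,x^*+s^*)$ is monotonically related to $\gra A$, and maximality yields $x^*+s^*\in Ax$ (the reverse inclusion being trivial). For maximality of $L^*AL$, monotonicity is immediate from $\langle u-v,L^*a^*-L^*b^*\rangle=\langle Lu-Lv,a^*-b^*\rangle$. Given $(u,u^*)$ monotonically related to $\gra(L^*AL)$, I would use $\ran L^*=Y^*$ to pick $x^*\in X^*$ with $L^*x^*=u^*$ and then check that $(Lu,x^*)$ is monotonically related to $\gra A$: each $(a,a^*)\in\gra A$ has $a=Lv$, and the inequality $\langle Lu-a,x^*-a^*\rangle\geq0$ reduces to the defining one for $(u,u^*)$ tested against $(v,L^*a^*)\in\gra(L^*AL)$. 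Maximality of $A$ then gives $x^*\in A(Lu)$, whence $u^*=L^*x^*\in(L^*AL)(u)$.

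For the reverse implication, assume $A$ is $\ran L$--saturated and $L^*AL$ is maximally monotone, and let $(x,x^*)$ be monotonically related to $\gra A$. The crucial first step, and the main obstacle, is to show $x\in\ran L$. If not, then since $\ran L$ is closed, Hahn--Banach provides $z^*\in(\ran L)^\bot=\ker L^*$ with $\langle x,z^*\rangle\neq0$; saturation lets me replace any $a^*\in Aa$ by $a^*+tz^*$ for all $t\in\RR$, and because $a\in\ran L$ forces $\langle a,z^*\rangle=0$, the monotone-relatedness inequality becomes $\langle x-a,x^*-a^*\rangle-t\langle x,z^*\rangle\geq0$ for every $t\in\RR$, which is absurd. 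Hence $x=Lu$ for a unique $u\in Y$.

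Once $x\in\ran L$, I would push the problem to $Y$: setting $u^*:=L^*x^*$, the computation $\langle u-v,u^*-L^*b^*\rangle=\langle x-Lv,x^*-b^*\rangle\geq0$ shows that $(u,u^*)$ is monotonically related to $\gra(L^*AL)$, so its maximality yields some $a^*\in A(Lu)=Ax$ with $L^*a^*=L^*x^*$, that is, $x^*-a^*\in\ker L^*$. Finally, saturation applied to $a^*\in Ax$ gives $x^*=a^*+(x^*-a^*)\in Ax$, so $(x,x^*)\in\gra A$ and $A$ is maximally monotone. The only delicate points are the separation-plus-saturation argument forcing $x\in\ran L$ and the careful bookkeeping of the adjoint pairing; every remaining verification is routine.
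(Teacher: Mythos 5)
Your proof is correct, and there is nothing in the paper to compare it against: the paper states this result as a Fact, quoted without proof from Simons \cite[Theorem~28.9]{Si2}, so your argument is a self-contained, elementary substitute for the cited source. The two pivotal steps are exactly the right ones and are executed correctly: (1) in the reverse implication, combining a Hahn--Banach separation functional $z^*\in(\ran L)^{\bot}=\ker L^*$ with $\ran L$--saturation (which lets you perturb any $a^*\in Aa$ by $tz^*$ for all $t\in\RR$ while $\langle a,z^*\rangle=0$) to force every monotonically related point $(x,x^*)$ to satisfy $x\in\ran L$ --- this is where both the closedness of $\ran L$ and the saturation hypothesis are genuinely consumed; and (2) using $\ran L^*=Y^*$ to lift $u^*$ to some $x^*$ with $L^*x^*=u^*$, so that monotone relatedness transports faithfully in both directions through the identity $\langle Lu-Lv,x^*-a^*\rangle=\langle u-v,L^*x^*-L^*a^*\rangle$, the hypothesis $\dom A\subseteq\ran L$ guaranteeing that every $(a,a^*)\in\gra A$ can be tested this way. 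The final bookkeeping --- maximality of $L^*AL$ produces $a^*\in Ax$ with $x^*-a^*\in\ker L^*$, and saturation then absorbs the difference to give $x^*\in Ax$ --- closes the loop correctly. Two trivial quibbles: your parenthetical suggests $\ran L^*=Y^*$ ``forces'' closed range, whereas closedness of $\ran L$ is a hypothesis (only injectivity of $L$ follows, from $\ker L={}^{\bot}(\ran L^*)$, and in fact injectivity is never essentially used); and in the forward direction it is worth recording that $\gra(L^*AL)\neq\varnothing$, which is immediate from $\gra A\neq\varnothing$ and $\dom A\subseteq\ran L$.
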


\begin{theorem}\label{Simonco:1}
 Let $Y$ be a Banach space, and $L:Y\rightarrow X$ be an isomorphism into $X$.
 Let $T:Y\rightrightarrows Y^*$ be monotone.
  Then $T$ is maximally monotone
 if, and only if  $(L^*)^{-1}TL^{-1}$, mapping $X$ into $X^*$, is maximally monotone.
\end{theorem}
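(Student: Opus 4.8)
The plan is to deduce this equivalence directly from Simons' saturation result, Fact~\ref{Satu:1}, by recognizing $A:=(L^*)^{-1}TL^{-1}$ as exactly the operator to which that Fact applies. First I would record the structural properties of $L$. Since $L$ is an isomorphism into $X$, it is bounded below, so $\ran L$ is closed; and since $L$ is injective with closed range, the closed range theorem gives $\ran L^*=(\ker L)^{\bot}=Y^*$, i.e. $L^*$ is surjective. These are precisely the standing hypotheses on $L$ in Fact~\ref{Satu:1}.

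Next I would make the description of $A$ explicit. Unwinding the composition, one checks that for $(x,x^*)\in X\times X^*$ we have $(x,x^*)\in\gra A$ if and only if $x\in\ran L$ and $(L^{-1}x,L^*x^*)\in\gra T$; equivalently $\gra A=\menge{(Ly,x^*)}{(y,L^*x^*)\in\gra T}$. Several consequences follow at once. Using $\scal{Lz}{x^*}=\scal{z}{L^*x^*}$, monotonicity of $T$ transfers verbatim to $A$ (and back), so $A$ is monotone; clearly $\dom A\subseteq\ran L$, and $\gra A\neq\varnothing$ whenever $\gra T\neq\varnothing$. A short computation, using surjectivity of $L^*$, shows $L^*AL=T$: indeed $A(Ly)=(L^*)^{-1}(Ty)=\menge{x^*}{L^*x^*\in Ty}$, and applying $L^*$ recovers all of $Ty$ since $L^*$ is onto.

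I would then verify that $A$ is automatically $\ran L$--saturated. Here $(\ran L)^{\bot}=\ker L^*$, so if $(x,x^*)\in\gra A$ and $z^*\in\ker L^*$ then $L^*(x^*+z^*)=L^*x^*$, whence $(x,x^*+z^*)\in\gra A$; thus $Ax+(\ran L)^{\bot}=Ax$ for every $x\in\dom A$. With every hypothesis of Fact~\ref{Satu:1} in place and the saturation condition free, that Fact yields: $A$ is maximally monotone if and only if $L^*AL=T$ is maximally monotone. The degenerate case $\gra T=\varnothing$ is handled separately, since then $\gra A=\varnothing$ and neither operator is maximally monotone, so the equivalence holds trivially; this completes both directions.

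The main obstacle is not any single estimate but the bookkeeping of the set-valued composition: because $L$ need not be surjective, $(L^*)^{-1}$ is genuinely multivalued, its fibres being cosets of $\ker L^*$, and one must track this carefully to obtain the clean graph description, the identity $L^*AL=T$, and the saturation property. Once these are established the theorem is immediate from Fact~\ref{Satu:1}.
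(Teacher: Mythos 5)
Your proposal is correct and follows essentially the same route as the paper: both reduce the theorem to Simons' saturation result (Fact~\ref{Satu:1}) by checking that $\ran L$ is closed, that $\ran L^*=Y^*$, that $A=(L^*)^{-1}TL^{-1}$ is monotone with $\dom A\subseteq\ran L$ and $L^*AL=T$, and that $A$ is $\ran L$--saturated. The only cosmetic differences are that the paper obtains saturation by observing $\{0\}\times(\ran L)^{\bot}\subseteq\gra(L^*)^{-1}$ and citing Fact~\ref{Rea:1}, whereas you verify the same coset identity $Ax+\ker L^*=Ax$ directly, and you make the trivial case $\gra T=\varnothing$ explicit where the paper leaves it implicit.
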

\begin{proof}
Let $A=(L^*)^{-1}TL^{-1}$. Then $\dom A\subseteq\ran L$.
  Since  $L$ is an  isomorphism into $X$, $\ran L$ is closed.
By \cite[Theorem~3.1.22(b)]{Megg} or \cite[Exercise~2.39(i), page~59]{FabianHH}, $\ran L^*=Y^*$.
Hence $\gra(L^*)^{-1}TL^{-1}\neq\varnothing$ if and only if $\gra T\neq\varnothing$.
Clearly, $A$ is monotone.
Since $\{0\}\times(\ran L)^{\bot}\subseteq\gra (L^*)^{-1}$ and then by Fact~\ref{Rea:1},
 $A=(L^*)^{-1}TL^{-1}$ is $\ran L$--saturated.
By Fact~\ref{Satu:1},
$A=(L^*)^{-1}TL^{-1}$ is maximally monotone
if and only if  $L^*AL=T$ is maximally monotone.
\end{proof}

The following consequence will allow us to construct maximally
monotone operators that are not of type (D) in a variety of
non-reflexive Banach spaces.

\begin{corollary}[Subspaces]\label{Simonco:2}
 Let $Y$ be a Banach space, and $L:Y\rightarrow X$ be an isomorphism into $X$.
 Let $T:Y\rightrightarrows Y^*$ be maximally monotone. Assume that $T$ is not of type (D).
Then  $(L^*)^{-1}TL^{-1}$ is a maximally monotone operator mapping
$X$ into $X^*$ that
 is not of type (D). In particular, every Banach subspace of a type (D)
 space is of type (D).
\end{corollary}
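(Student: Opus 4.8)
The plan is to read off maximal monotonicity directly from Theorem~\ref{Simonco:1}, and then to establish the failure of type (D) by passing to the equivalent notion of type (NI) via Fact~\ref{PF:Su1} and transporting a single ``bad point'' from $\gra T$ to $\gra A$. Write $A:=(L^*)^{-1}TL^{-1}$. Since $T$ is maximally monotone, Theorem~\ref{Simonco:1} immediately yields that $A$ is maximally monotone, so the whole content lies in showing that $A$ is not of type (D). By Fact~\ref{PF:Su1} it suffices to show $A$ is not of type (NI), i.e.\ to exhibit one pair $(x^{**},x^*)\in X^{**}\times X^*$ at which the defining supremum falls strictly below $\langle x^{**},x^*\rangle$.

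First I would record the two properties of $L$ that drive the transfer: as already used in the proof of Theorem~\ref{Simonco:1}, $L$ being an isomorphism into $X$ forces $\ran L^*=Y^*$, so every element of $Y^*$ has a preimage under $L^*$; and the bidual adjoint $L^{**}\colon Y^{**}\to X^{**}$ satisfies $\langle L^{**}y^{**},z^*\rangle=\langle y^{**},L^*z^*\rangle$ for all $z^*\in X^*$. Since $T$ is not of type (NI), fix a witness $(y^{**},y^*)\in Y^{**}\times Y^*$ with $\sup_{(b,b^*)\in\gra T}\big(\langle b,y^*\rangle+\langle y^{**},b^*\rangle-\langle b,b^*\rangle\big)<\langle y^{**},y^*\rangle$. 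I would then set $x^{**}:=L^{**}y^{**}$ and choose any $x^*\in X^*$ with $L^*x^*=y^*$ (possible because $\ran L^*=Y^*$).

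The heart of the argument is a bookkeeping computation showing that the NI-supremum for $A$ at $(x^{**},x^*)$ coincides with that for $T$ at $(y^{**},y^*)$. Indeed $\gra A=\{(Lb,a^*)\colon b\in\dom T,\ L^*a^*\in Tb\}$, and for such $(a,a^*)$, putting $b:=L^{-1}a$ and $b^*:=L^*a^*\in Tb$, the three pairings collapse via the adjoint identities to $\langle a,x^*\rangle=\langle b,y^*\rangle$, $\langle x^{**},a^*\rangle=\langle y^{**},b^*\rangle$, and $\langle a,a^*\rangle=\langle b,b^*\rangle$. Because $L^*$ is onto, the assignment $(a,a^*)\mapsto(b,b^*)=(L^{-1}a,L^*a^*)$ maps $\gra A$ onto $\gra T$, so the quantity maximized for $A$ runs through precisely the same values as the quantity maximized for $T$, whence the two suprema agree. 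Likewise $\langle x^{**},x^*\rangle=\langle y^{**},y^*\rangle$. Combining with the strict inequality at the witness yields that the NI-supremum for $A$ is strictly below $\langle x^{**},x^*\rangle$, so $A$ is not of type (NI), and hence, by Fact~\ref{PF:Su1}, not of type (D).

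Finally, the ``in particular'' assertion follows by taking $L$ to be the inclusion of a closed Banach subspace $Y$ into a type (D) space $X$ (an isometry, hence an isomorphism into): if $Y$ carried a maximally monotone $T$ not of type (D), the construction would produce a maximally monotone operator on $X$ not of type (D), contradicting that $X$ is of type (D). I do not expect a serious obstacle here; the one point requiring care is that the correspondence $\gra A\leftrightarrow\gra T$ be an \emph{exact} match at the level of the NI summand, which rests entirely on $\ran L^*=Y^*$ (guaranteeing that the preimages $a^*$, and in particular the chosen $x^*$, exist). It is worth stressing that one never needs to control the behaviour of $A$ at bidual points outside $\ran L^{**}$: producing a single witness suffices, and the natural witness $x^{**}=L^{**}y^{**}$ lives in $\ran L^{**}$.
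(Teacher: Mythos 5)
Your proposal is correct and takes essentially the same route as the paper's proof: maximal monotonicity from Theorem~\ref{Simonco:1}, then transport of an (NI)-witness $(y^{**},y^*)$ for $T$ to $(L^{**}y^{**},x^*)$ with $L^*x^*=y^*$ (using $\ran L^*=Y^*$), the graphs matched through the adjoint identities, and the conclusion via Fact~\ref{PF:Su1}. The only cosmetic difference is that the paper carries out the comparison as a direct chain of equalities on the supremum, whereas you phrase it as an exact value-preserving correspondence $(a,a^*)\mapsto(L^{-1}a,L^*a^*)$ between $\gra A$ and $\gra T$; the substance is identical.
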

\begin{proof}
By Theorem~\ref{Simonco:1}, $(L^*)^{-1}TL^{-1}$ is maximally
monotone. By Fact~\ref{PF:Su1}, there exists $(y^{**}_0,y^*_0)\in
Y^{**}\times Y^*$ such that \allowdisplaybreaks
\begin{align}
\sup_{(b,b^*)\in\gra T}\big\{\langle y^{**}_0,b^*\rangle
+\langle y^*_0,b\rangle-\langle b,b^*\rangle\big\}<\langle y^{**}_0,y^*_0\rangle.\label{Brefs:1}
\end{align}
By \cite[Theorem~3.1.22(b)]{Megg} or  \cite[Exercise~2.39(i), page~59]{FabianHH},
$\ran L^*=Y^*$ and thus
 there exists $x^*_0\in X^*$ such that $L^*x^*_0=y^*_0$.
Let $A=(L^*)^{-1}TL^{-1}$.
Then we have
\begin{align}
&\sup_{(a,a^*)\in\gra A}\big\{\langle L^{**}y^{**}_0,a^*\rangle
+\langle x^*_0,a\rangle-\langle a,a^*\rangle\big\}\nonumber\\
&=\sup_{(Ly,a^*)\in\gra A}\big\{\langle y^{**}_0, L^*a^*\rangle
+\langle x^*_0,Ly\rangle-\langle Ly,a^*\rangle\big\}\nonumber\\
&=\sup_{(Ly,a^*)\in\gra A}\big\{\langle y^{**}_0, L^*a^*\rangle
+\langle L^*x^*_0,y\rangle-\langle y,L^*a^*\rangle\big\}\nonumber\\
&=\sup_{(Ly,a^*)\in\gra A}\big\{\langle y^{**}_0, L^*a^*\rangle
+\langle y^*_0,y\rangle-\langle y,L^*a^*\rangle\big\}\nonumber\\
&=\sup_{(y,y^*)\in\gra T}\big\{\langle y^{**}_0, y^*\rangle
+\langle y^*_0,y\rangle-\langle y,y^*\rangle\big\}
\quad \text{(by $(Ly,a^*)\in\gra A\Leftrightarrow (y,L^*a^*)\in\gra T$)} \nonumber\\
&<\langle y^{**}_0,y^*_0\rangle\quad \text{(by \eqref{Brefs:1})}\nonumber\\
&=\langle L^{**}y^{**}_0, x^*_0\rangle.
\end{align}
Thus $A$ is not of type (NI) and
 hence $A=(L^*)^{-1}TL^{-1}$ is not of type (D) by Fact~\ref{PF:Su1}.
\end{proof}

Note that it follows that $X$ is of type (D) whenever $X^{**}$ is.

\section{Main result}\label{s:main}

We start with several technical tools. To relate Fitzpatrick
functions and skew operators we have:

\begin{lemma}\label{LeSK:a1}
Let $A\colon X\To X^*$ be a skew linear relation. Then
\begin{align}
F_A=\iota_{\gra (-A^*)\cap X\times X^*}.\label{Lesk:1}
\end{align}
\end{lemma}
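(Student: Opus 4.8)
The plan is to exploit skewness to reduce the Fitzpatrick function to the supremum of a \emph{linear} functional over the linear subspace $\gra A$, which forces $F_A$ to be $\{0,\pinf\}$-valued, and then to identify its zero set with $\gra(-A^*)\cap X\times X^*$ by unwinding the definition of the adjoint.

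First I would use that $A$ is skew, so $\scal{a}{a^*}=0$ for every $(a,a^*)\in\gra A$. The defining supremum of $F_A$ then loses its quadratic term, and
\[
F_A(x,x^*)=\sup_{(a,a^*)\in\gra A}\big(\scal{x}{a^*}+\scal{a}{x^*}\big).
\]
For fixed $(x,x^*)$ the map $(a,a^*)\mapsto\scal{x}{a^*}+\scal{a}{x^*}$ is linear in $(a,a^*)$, and since $A$ is a linear relation, $\gra A$ is a subspace. The supremum of a linear functional over a subspace equals $0$ if the functional vanishes identically on the subspace and $\pinf$ otherwise. Hence $F_A$ takes only the values $0$ and $\pinf$, and $F_A(x,x^*)=0$ precisely when $\scal{x}{a^*}+\scal{a}{x^*}=0$ for all $(a,a^*)\in\gra A$.

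It then remains to check that, for $(x,x^*)\in X\times X^*$, this vanishing condition characterizes membership in $\gra(-A^*)$. Tracing through the definitions, $(x,x^*)\in\gra(-A^*)$ means $(x,-x^*)\in\gra A^*$, i.e. $(-x^*,-x)\in(\gra A)^{\bot}$, which under the pairing $\scal{(a,a^*)}{(y^*,y^{**})}=\scal{a}{y^*}+\scal{a^*}{y^{**}}$ on $(X\times X^*)^*=X^*\times X^{**}$ unwinds to $\scal{a}{x^*}+\scal{x}{a^*}=0$ for all $(a,a^*)\in\gra A$, where I use the canonical identification $x\in X\subseteq X^{**}$ so that $\scal{a^*}{x}_{X^*\!,X^{**}}=\scal{x}{a^*}_{X,X^*}$. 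This is exactly the zero set found above, which yields $F_A=\iota_{\gra(-A^*)\cap X\times X^*}$. The main thing to watch is the sign bookkeeping in passing from $A^*$ to $-A^*$, together with the fact that $\gra(-A^*)$ lives in $X^{**}\times X^*$ whereas $F_A$ is defined on $X\times X^*$ --- which is precisely why the intersection with $X\times X^*$ appears in the indicator; no deeper obstacle arises, since the whole argument is the observation that a linear functional on a subspace has supremum $0$ or $\pinf$.
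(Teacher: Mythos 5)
Your proof is correct and follows essentially the same route as the paper's: skewness eliminates the quadratic term, the supremum of the resulting linear functional over the subspace $\gra A$ is $0$ or $+\infty$, and the zero set is identified with $(\gra A)^{\bot}$, i.e.\ with $\gra(-A^*)\cap X\times X^*$ after unwinding the definition of the adjoint. Your sign bookkeeping in the passage $(x,x^*)\in\gra(-A^*)\Leftrightarrow(-x^*,-x)\in(\gra A)^{\bot}$ is accurate, and in fact you make explicit a step the paper's proof states without comment.
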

\begin{proof}
Let $(x_0,x_0^*)\in X\times X^*$. We have
\begin{align}
F_A(x_0,x_0^*)&=\sup_{(x,x^*)\in\gra A}\{
\langle (x^*_0,x_0),(x,x^*)\rangle-\langle x,x^*\rangle\}\nonumber\\
&=\sup_{(x,x^*)\in\gra A}
\langle (x^*_0,x_0),(x,x^*)\rangle\nonumber\\
&=\iota_{(\gra A)^{\bot}}(x^*_0,x_0)\nonumber\\
&=\iota_{\gra(-A^*)}(x_0,x^*_0)\nonumber\\
&=\iota_{\gra (-A^*)\cap X\times X^*} (x_0,x^*_0).\nonumber\end{align}
Hence \eqref{Lesk:1} holds.
\end{proof}

To produce operators not of type (D) but that are of (BR) we
exploit:

\begin{lemma}\label{MSCBR:L1}
Let $A:X\rightrightarrows X^*$ be a maximally monotone and  linear skew operator.
Assume that $\gra(-A^*)\cap X\times X^*\subseteq\gra A$.
Then $A$ is of type (BR).
\end{lemma}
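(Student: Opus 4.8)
The plan is to show that the two hypotheses force the Fitzpatrick function $F_A$ to collapse to the indicator function of $\gra A$, after which the type (BR) condition becomes almost vacuous. The reduction rests on Lemma~\ref{LeSK:a1} together with the skewness identity $\langle a,a^*\rangle=0$ on $\gra A$.

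First I would pin down $F_A$. Since $A$ is skew we have $\gra A\subseteq\gra(-A^*)$, and because $\gra A\subseteq X\times X^*$ this gives $\gra A\subseteq\gra(-A^*)\cap X\times X^*$; the standing assumption supplies the reverse inclusion, so that $\gra A=\gra(-A^*)\cap X\times X^*$. Substituting this equality into Lemma~\ref{LeSK:a1} yields
\begin{align*}
F_A=\iota_{\gra A}.
\end{align*}

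Next I would translate the defining inequality of type (BR). Fix $(x,x^*)\in X\times X^*$ and $\alpha,\beta>0$ with $\inf_{(a,a^*)\in\gra A}\langle x-a,x^*-a^*\rangle>-\alpha\beta$. Expanding the pairing and using skewness (so that $\langle a,a^*\rangle=0$ for every $(a,a^*)\in\gra A$) gives, for each $(a,a^*)\in\gra A$,
\begin{align*}
\langle x-a,x^*-a^*\rangle=\langle x,x^*\rangle-\big(\langle x,a^*\rangle+\langle a,x^*\rangle\big),
\end{align*}
so that taking the infimum over $\gra A$ and comparing with the definition of $F_A$ produces
\begin{align*}
\inf_{(a,a^*)\in\gra A}\langle x-a,x^*-a^*\rangle=\langle x,x^*\rangle-F_A(x,x^*).
\end{align*}

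Finally, the assumed bound $\inf>-\alpha\beta$ forces $F_A(x,x^*)<+\infty$; since $F_A=\iota_{\gra A}$, this means $(x,x^*)\in\gra A$ (with $F_A(x,x^*)=0$). Choosing $(b,b^*):=(x,x^*)\in\gra A$ then trivially satisfies $\|x-b\|=0<\alpha$ and $\|x^*-b^*\|=0<\beta$, which is exactly what the definition of type (BR) demands. I expect the only real content to be the first step, namely recognizing that skewness together with the inclusion $\gra(-A^*)\cap X\times X^*\subseteq\gra A$ collapses $F_A$ to $\iota_{\gra A}$; once that is in hand the (BR) property is immediate, because the finiteness constraint built into its hypothesis already places $(x,x^*)$ in $\gra A$.
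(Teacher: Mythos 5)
Your proof is correct and is essentially the paper's argument in lightly repackaged form: the paper deduces directly from the bound $\inf_{(a,a^*)\in\gra A}\langle x-a,x^*-a^*\rangle>-\alpha\beta$ that the linear functional $(a,a^*)\mapsto\langle x,a^*\rangle+\langle a,x^*\rangle$ is bounded above on the subspace $\gra A$ and hence vanishes, so $(x,x^*)\in\gra(-A^*)\cap X\times X^*\subseteq\gra A$, and then takes $(b,b^*)=(x,x^*)$ exactly as you do. Your detour through $F_A=\iota_{\gra A}$ via Lemma~\ref{LeSK:a1} simply outsources that vanishing step to the lemma (whose proof contains the identical computation), so the two routes coincide in substance.
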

\begin{proof}
Let $\alpha,\beta>0$ and $(x,x^*)\in X\times X^*$ be such that $\inf_{(a,a^*)\in\gra A}
\langle x-a,x^*-a^*\rangle >-\alpha\beta$. Since $A$ is skew, we have
\begin{align}
\inf_{(a,a^*)\in\gra A}\langle x,x^*\rangle-\left[\langle x,a^*\rangle+\langle a,x^*\rangle\right]=
\inf_{(a,a^*)\in\gra A}\langle x-a,x^*-a^*\rangle >-\alpha\beta.
\end{align}
Thus, $\langle x,a^*\rangle+\langle a,x^*\rangle=0, \forall
(a,a^*)\in\gra A$ and hence $(x,x^*)\in\gra(-A^*)$. Then by
assumption, $(x,x^*)\in\gra A$.  Taking $(b,b^*)=(x,x^*)$, we have
$\|b-x\|<\alpha$ and $\|b^*-x^*\|<\beta$. Hence $A$ is of type (BR).
\end{proof}

\begin{corollary}\label{MSCUni:1}
Let $A:X\rightrightarrows X^*$ be a maximally monotone and  linear
skew operator that  is not of type (D). Assume that $A$ is unique.
Then $\gra A=\gra(-A^*)\cap X\times X^*$   and so $A$ is of type
(BR).
\end{corollary}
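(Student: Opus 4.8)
The plan is to obtain the claimed equality essentially for free by combining three results already at our disposal, so that no new computation is required. The guiding observation is that a skew linear operator is in particular a \emph{linear relation}, so the uniqueness machinery of Marques Alves and Svaiter is available; once the Fitzpatrick function is identified as an indicator function, the type (BR) conclusion follows from the lemma tailored to that situation.

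First I would invoke Fact~\ref{MAS:UN1}. By hypothesis $A$ is a maximally monotone linear relation that is not of type (D) and is unique, so all the hypotheses of that Fact are met, and it delivers the identity $\gra A = \dom F_A$. Second, I would describe $\dom F_A$ explicitly using that $A$ is skew: Lemma~\ref{LeSK:a1} gives $F_A = \iota_{\gra(-A^*)\cap X\times X^*}$, and since the domain of an indicator function is exactly the set it indicates, $\dom F_A = \gra(-A^*)\cap X\times X^*$. Chaining this with the first step yields the desired equality $\gra A = \gra(-A^*)\cap X\times X^*$.

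Finally, the equality just obtained gives in particular the inclusion $\gra(-A^*)\cap X\times X^* \subseteq \gra A$, which is precisely the standing hypothesis of Lemma~\ref{MSCBR:L1}. Applying that lemma (whose remaining hypotheses—maximal monotonicity, linearity, skewness—are all part of the present assumptions) concludes that $A$ is of type (BR). I do not anticipate any genuine obstacle here: the only point requiring a moment of care is verifying that the several hypotheses of the cited results (maximal monotonicity, being a linear relation, skewness, failure of type (D), and uniqueness) are simultaneously in force, which they are directly by the statement of the corollary.
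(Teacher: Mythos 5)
Your proposal is correct and follows exactly the paper's own route: the paper's proof is the one-line instruction to apply Fact~\ref{MAS:UN1}, Lemma~\ref{LeSK:a1} and Lemma~\ref{MSCBR:L1} directly, and your write-up simply makes explicit the chain $\gra A=\dom F_A=\gra(-A^*)\cap X\times X^*$ followed by the (BR) conclusion. All hypothesis checks you flag (linear relation, skewness, maximality, failure of type (D), uniqueness) are indeed immediate from the corollary's assumptions, so nothing is missing.
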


\begin{proof}
Apply Fact~\ref{MAS:UN1}, Lemma~\ref{LeSK:a1} and Lemma~\ref{MSCBR:L1} directly.
\end{proof}

\begin{proposition}\label{ProJon}
Let $A \colon X \rightrightarrows X^*$ be  maximally monotone.
Assume that $A$ is of type (NI) and  that there exists $e\in X^*$
such that
\begin{align*} \langle x^*,x \rangle \ge \langle e,x \rangle^2,\quad \forall
(x,x^*) \in \gra A.
\end{align*}
Then $e \in \overline{\conv\ran A}$.
\end{proposition}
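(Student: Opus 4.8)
The statement asserts that the point $e\in X^*$ lies in the norm-closed convex set $C:=\overline{\conv\ran A}\subseteq X^*$. The plan is to argue by contradiction, combining Hahn--Banach separation with the type (NI) inequality. The quadratic hypothesis $\langle a,a^*\rangle\ge\langle e,a\rangle^2$ (valid for all $(a,a^*)\in\gra A$) is exactly what will let me ``complete the square'' inside the (NI) supremum.

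First I would suppose $e\notin C$. Since $A$ is maximally monotone, $\gra A\neq\varnothing$, so $C$ is a nonempty norm-closed convex subset of the Banach space $X^*$, while $\{e\}$ is a disjoint compact set. The strong separation theorem then provides $x^{**}\in X^{**}=(X^*)^*$ and $\gamma\in\RR$ with
\[
\langle a^*,x^{**}\rangle\le\gamma<\langle e,x^{**}\rangle=:c,\qquad\forall a^*\in\ran A.
\]

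The key step is to feed a two-parameter family of test vectors into (NI). For each $\lambda>0$ and $s\in\RR$ I would apply the (NI) inequality to the pair $(\lambda x^{**},\,se)\in X^{**}\times X^*$. Its right-hand side equals $\langle\lambda x^{**},se\rangle=\lambda s c$. For the left-hand side, every $(a,a^*)\in\gra A$ satisfies, by the quadratic hypothesis and the separation bound,
\[
\langle a,se\rangle+\langle a^*,\lambda x^{**}\rangle-\langle a,a^*\rangle
\le s\langle e,a\rangle-\langle e,a\rangle^2+\lambda\gamma
\le\tfrac{s^2}{4}+\lambda\gamma,
\]
the last inequality being the elementary bound $su-u^2\le s^2/4$ with $u=\langle e,a\rangle$. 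Taking the supremum over $\gra A$ and invoking (NI) gives $\lambda s c\le\tfrac{s^2}{4}+\lambda\gamma$, that is,
\[
\tfrac{s^2}{4}-\lambda c\,s+\lambda\gamma\ge0,\qquad\forall s\in\RR.
\]

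Finally, a nonnegative upward-opening quadratic in $s$ must have nonpositive discriminant, so $\lambda^2c^2-\lambda\gamma\le0$, i.e.\ $\lambda c^2\le\gamma$ for every $\lambda>0$. Letting $\lambda\to+\infty$ forces $c^2\le0$, hence $c=0$; but then $\gamma<c=0$ contradicts $\lambda c^2=0\le\gamma$. This contradiction establishes $e\in C$. I expect the decisive (and least obvious) step to be the choice of the scaled test pair $(\lambda x^{**},se)$: the multiple $se$ is forced by the quadratic hypothesis so that completing the square yields the harmless $s^2/4$ term, while the free scaling $\lambda$ of the separating functional is what actually drives the contradiction, since the single instance $\lambda=1$ would only give $c^2\le\gamma<c$, which is perfectly consistent (indeed it merely says $0<c<1$).
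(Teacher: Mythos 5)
Your proof is correct and follows essentially the same route as the paper: argue by contradiction, separate $e$ from $\overline{\conv\ran A}$ by Hahn--Banach, feed a test pair built from the separating functional and $e$ into the (NI) inequality, and complete the square using $\langle x^*,x\rangle\ge\langle e,x\rangle^2$. The only difference is bookkeeping: the paper normalizes the separating functional $x_0^{**}$ so that $\langle e-x^*,x_0^{**}\rangle\ge 1$ on $\ran A$, whence the single test pair $(x_0^{**},e)$ already yields $\langle x^*-e,x-x_0^{**}\rangle\ge\min_{t}(t^2-t+1)=\tfrac{3}{4}>0$ on $\gra A$, while you keep the separation unnormalized and compensate by scanning the two-parameter family $(\lambda x^{**},se)$ with a discriminant argument and the limit $\lambda\to\infty$ --- so the scaling you single out as decisive can in fact be absorbed into the normalization, making the limit unnecessary.
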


\begin{proof}  Suppose $e  \not \in  \overline{\conv\ran A}$.
 Then by the Separation Theorem, there exists
 $x_0^{**} \in X^{**}$ such that $\langle e-x^*,x_0^{**}\rangle\ge 1$ for all $x^*
\in \ran A$.  Then we have
\begin{align*}\langle x^*-e,x-x_0^{**}\rangle &=
 \langle e-x^*,x_0^{**}\rangle +\langle x^*-e,x\rangle,\quad \forall(x,x^*)\in\gra A
\\&\ge 1+\langle e,x\rangle^2-\langle e,x\rangle
\\&\ge  \min _{t\in\RR} t^2-t+1 =\frac{3}{4}.
\end{align*}
Thus $A$ is not of type (NI), which contradicts the assumption.
\end{proof}

The proof of the following result was partially inspired by that \cite[Proposition~2.2]{BuSv}.
\begin{proposition}\label{ProJon1}
Let $A \colon X \rightrightarrows X^*$ be  a maximally monotone linear relation. Assume that
there exists $e\in X^*$
such that
$e \notin \overline{\ran A}$ and
 that
\begin{align*} \langle x^*,x \rangle \ge \langle e,x \rangle^2,\quad \forall
(x,x^*) \in \gra A.
\end{align*}
Then  $A$ is neither of type (D) nor unique.
\end{proposition}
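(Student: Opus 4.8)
The plan is to establish the two conclusions separately, in each case reducing to the already-proved Proposition~\ref{ProJon} and the cited Facts, and exploiting throughout that $\gra A$ is a linear subspace.

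First, for the assertion that $A$ is not of type (D), I would note that since $A$ is a linear relation, $\ran A = P_{X^*}(\gra A)$ is a linear subspace of $X^*$, so $\overline{\conv\ran A}=\overline{\ran A}$. The hypothesis $e\notin\overline{\ran A}$ then gives $e\notin\overline{\conv\ran A}$. Now I apply the contrapositive of Proposition~\ref{ProJon}: the quadratic bound $\langle x^*,x\rangle\ge\langle e,x\rangle^2$ holds on $\gra A$, so were $A$ of type (NI) we would be forced to conclude $e\in\overline{\conv\ran A}$, a contradiction. Hence $A$ is not of type (NI), and by Fact~\ref{PF:Su1} not of type (D).

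Second, for non-uniqueness I would argue by contradiction. Suppose $A$ is unique. Since $A$ is a maximally monotone linear relation that is not of type (D) (just shown), Fact~\ref{MAS:UN1} yields $\gra A=\dom F_A$. I would then produce a point of $\dom F_A$ lying outside $\gra A$, namely $(0,e)$. Indeed, for every $(a,a^*)\in\gra A$ the quadratic bound gives $\langle a,e\rangle-\langle a,a^*\rangle\le\langle e,a\rangle-\langle e,a\rangle^2\le\tfrac14$, since $t-t^2\le\tfrac14$ with $t=\langle e,a\rangle\in\RR$; therefore
\begin{equation*}
F_A(0,e)=\sup_{(a,a^*)\in\gra A}\big(\langle a,e\rangle-\langle a,a^*\rangle\big)\le\tfrac14<+\infty,
\end{equation*}
so $(0,e)\in\dom F_A$. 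On the other hand, $(0,e)\in\gra A$ would force $e\in\ran A\subseteq\overline{\ran A}$, contradicting $e\notin\overline{\ran A}$. Thus $(0,e)\in\dom F_A\setminus\gra A$, which contradicts $\gra A=\dom F_A$; hence $A$ is not unique.

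The subspace observation and the scalar maximization are routine; the one genuinely clever step is the choice of the test point $(0,e)$, at which the supremum defining $F_A$ collapses to the bounded one-dimensional problem $\sup_{t\in\RR}(t-t^2)=\tfrac14$. This is precisely where the squared right-hand side in the hypothesis is used, and I expect it to be the crux of the non-uniqueness half of the argument.
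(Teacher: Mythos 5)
Your proposal is correct, and while its first half matches the paper, its second half takes a genuinely different route. For the type (D) part, the paper argues exactly as you do: Proposition~\ref{ProJon} rules out type (NI) and Fact~\ref{PF:Su1} converts this to failure of type (D); your explicit remark that $\overline{\conv\ran A}=\overline{\ran A}$ because $\ran A$ is a linear subspace is a detail the paper leaves implicit but does rely on. For non-uniqueness, however, the paper does \emph{not} invoke Fact~\ref{MAS:UN1}. Instead it extracts, as in the proof of Proposition~\ref{ProJon}, a functional $x_0^{**}\in(\ran A)^{\bot}$ with $\langle e,x_0^{**}\rangle\geq 1$, verifies that $(\tfrac{1}{\alpha}x_0^{**},\alpha e)$ is monotonically related to $\gra A$ for every $0<\alpha<2$ (the quadratic hypothesis gives the lower bound $\min_{t\in\RR}\,(t^2-\alpha t+1)=1-\tfrac{\alpha^2}{4}>0$), and then applies Zorn's Lemma to $\gra A\cup\{(\tfrac{1}{\alpha_i}x_0^{**},\alpha_i e)\}$ for two values $0<\alpha_1<\alpha_2<2$; since $\langle \tfrac{1}{\alpha_1}x_0^{**}-\tfrac{1}{\alpha_2}x_0^{**},\alpha_1 e-\alpha_2 e\rangle=(\alpha_1-\alpha_2)(\tfrac{1}{\alpha_1}-\tfrac{1}{\alpha_2})\langle x_0^{**},e\rangle<0$, no single monotone graph can contain both added points, so the two maximally monotone extensions in $X^{**}\times X^*$ differ. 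Your alternative --- showing $F_A(0,e)\leq\tfrac{1}{4}<+\infty$ via $t-t^2\leq\tfrac{1}{4}$, so that $(0,e)\in\dom F_A\setminus\gra A$, contradicting the identity $\gra A=\dom F_A$ supplied by Fact~\ref{MAS:UN1} --- is sound: by that stage $A$ is a maximally monotone linear relation not of type (D), so the fact applies, and $(0,e)\in\gra A$ would force $e\in A0\subseteq\ran A$. Notably, this is precisely the technique the paper itself uses later for the operator $T$ in Theorem~\ref{PBABD:2}\ref{PBAB:emu2}. The trade-off: your argument is shorter but outsources the substance to the nontrivial Marques Alves--Svaiter theorem behind Fact~\ref{MAS:UN1} (which also needs the linear-relation hypothesis), whereas the paper's construction is self-contained modulo Zorn's Lemma and more informative, exhibiting a one-parameter family of pairwise monotonically incompatible points and hence, in effect, a continuum of distinct maximally monotone extensions in $X^{**}\times X^*$.
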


\begin{proof}  By Proposition~\ref{ProJon}, $A$ is not of type (NI) and hence $A$ is not of
type (D) by Fact~\ref{PF:Su1}.
Similar to the proof of Proposition~\ref{ProJon}, there exists
 $x_0^{**} \in X^{**}$ such that $\langle e,x_0^{**}\rangle\ge 1$ and $x^{**}_0\in(\ran A)^{\bot}$.
Let $0<\alpha<2$. Then we have
\begin{align*}\langle x^*-\alpha e,x-\tfrac{1}{\alpha}x_0^{**}\rangle &=
 \langle \alpha e-x^*,\tfrac{1}{\alpha}x_0^{**}\rangle
  +\langle x^*-\alpha e,x\rangle,\quad \forall(x,x^*)\in\gra A
\\&\ge 1+\langle e,x\rangle^2-\alpha\langle e,x\rangle
\\&\ge  \min _{t\in\RR} t^2-\alpha t+1\\
& =1-\frac{\alpha^2}{4}>0.
\end{align*}
Thus  for every $0<\alpha<2$, $(\tfrac{1}{\alpha}x^{**}_0, \alpha e)\in X^{**}\times X^*$
 is monotonically related to $\gra A$.
 Take $0<\alpha_1<\alpha_2<2$. Then by  Zorn's Lemma,
  we have a maximally monotone extension,
 $A_1:X^{**}\rightrightarrows X^*$ such that $\gra A_1\supseteq\gra A\cup\{(\tfrac{1}{\alpha_1}x^{**}_0,\alpha_1 e,)\}$,
 and we can also obtain a maximally monotone extension, $A_2 : X^{**}\rightrightarrows X^*$ such that
 $\gra A_2 \supseteq\gra A\cup \{(\tfrac{1}{\alpha_2} x_0^{**},\alpha_2 e)\}$.

 Now we show $\gra A_1\neq\gra A_2$. Suppose to the contrary that $\gra A_1=\gra A_2$. Then by the monotonicity of $A_1$,
   we have
\begin{align}\langle \tfrac{1}{\alpha_1}x^{**}_0-\tfrac{1}{\alpha_2}x^{**}_0,
 \alpha_1 e-\alpha_2 e\rangle\geq 0.\label{nonUn:p1}
 \end{align}
 On the other hand,
 \begin{align*}
 \langle \tfrac{1}{\alpha_1}x^{**}_0-\tfrac{1}{\alpha_2}x^{**}_0,
 \alpha_1 e-\alpha_2 e\rangle&=(\alpha_1-\alpha_2)(\tfrac{1}{\alpha_1}-\tfrac{1}{\alpha_2})\langle x^{**}_0,e\rangle\\
 &<(\alpha_1-\alpha_2)(\tfrac{1}{\alpha_1}-\tfrac{1}{\alpha_2})<0,
 \end{align*}
which contradicts \eqref{nonUn:p1}. Hence $\gra A_1\neq\gra A_2$ and thus $A$ is not unique.
\end{proof}

We are now ready to establish our work-horse Theorem~\ref{PBABD:2},
which allows  us to  construct various maximally monotone operators
--- both linear and nonlinear --- that are not of type (D). The idea of constructing the operators in
the following
 fashion is based upon  \cite[Theorem~5.1]{BB} and was stimulated by \cite{BuSv}.
\begin{theorem}[Predual constructions]
\label{PBABD:2}
Let $A: X^*\rightarrow X^{**}$ be linear and continuous.
 Assume that $\ran A \subseteq X$ and that there exists $e\in X^{**}\backslash X$ such that
\begin{align*}
\langle Ax^*,x^*\rangle=\langle e,x^*\rangle^2,\quad \forall x^*\in X^*.
\end{align*}
Let $ P$ and $S$ respectively  be the symmetric part and antisymmetric
 part of $A$.  Let $T:X\rightrightarrows X^*$  be defined by
\begin{align}\gra T&:=\big\{(-Sx^*,x^*)\mid x^*\in X^*, \langle e, x^*\rangle=0\big\}\nonumber\\
&=\big\{(-Ax^*,x^*)\mid x^*\in X^*, \langle e, x^*\rangle=0\big\}.\label{PBABA:a1}
\end{align}
Let $f:X\rightarrow\RX$ be a proper lower semicontinuous and convex function.
 Set $F:=f\oplus f^*$ on $ X\times X^*$.
Then the following hold.
\begin{enumerate}
\item\label{PBAB:em01}
$A$ is a maximally monotone operator on $X^*$ that is neither of type (D) nor unique.
\item\label{PBAB:emmaz1}
$Px^*=\langle x^*,e\rangle e,\ \forall x^*\in X^*.$

\item\label{PBAB:em1}
 $T$
is maximally monotone and skew on $X$.

\item\label{PBAB:emma1}
$\gra  T^*=\{(Sx^*+re,x^*)\mid x^*\in X^*,\ r\in\RR\}$.

\item\label{PBAB:emma2}
$-T$ is not maximally monotone.

\item\label{PBAB:emm1}
 $T$
is not of type (D).

\item\label{PBAB:em2}
$F_T=\iota_C$, where
\begin{align}
C:=\{(-Ax^*,x^*)\mid x^*\in X^*\}.
 \end{align}

 \item\label{PBAB:emu2}
$T$ is not unique.

\item\label{PBAB:emr3}
$T$ is not of type (BR).

 \item\label{BCC:0a2}   If $\dom T\cap\inte\dom\partial f\neq\varnothing$,
then $T+\partial f$ is maximally monotone.
\item \label{BCC:02}$F$ and $F_T$ are BC--functions  on $X\times X^*$.
\item\label{BCC:2}Moreover,  \begin{align*}\bigcup_{\lambda>0} \lambda
\big(P_{X^*}(\dom F_T)-P_{X^*}(\dom F)\big)=X^*,\end{align*} while,
assuming that there exists $(v_0,v_0^*)\in X\times X^*$ such that
\begin{align}
f^*(v_0^*)+f^{**}(v_0-A^*v^*_0)<\langle
v_0,v^*_0\rangle\label{IeSp:3},
\end{align} \index{BC--function} then
$F_T\Box_1F$ is not a BC--function.

 \item\label{BCC:3} Assume that
$\left[\ran A-\bigcup_{\lambda>0} \lambda\dom f\right]$
is a closed subspace of $X$ and that
$$\varnothing\neq\dom f^{**}\small\circ A^*|_{X^*}\nsubseteqq \{e\}_{\bot}.$$
Then $T+\partial f$ is not of type (D).

\item\label{BCC:4}
Assume that $\dom f^{**}=X^{**}$.
Then $T+\partial f$ is a maximally monotone operator that is not of type (D).
\end{enumerate}
\end{theorem}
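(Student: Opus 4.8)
The plan is to derive both assertions from the already-established parts~\ref{BCC:0a2} and~\ref{BCC:3}, once I have shown that the hypothesis $\dom f^{**}=X^{**}$ forces $f$ to be finite and continuous on all of $X$. First I would record the consequences of $\dom f^{**}=X^{**}$. Since $f$ is proper, lower semicontinuous and convex, the Fenchel--Moreau theorem gives $f^{**}|_X=f$ under the canonical embedding $X\hookrightarrow X^{**}$; because $f^{**}$ is finite everywhere on $X^{**}\supseteq X$, it follows that $f$ is finite on $X$, i.e.\ $\dom f=X$. A proper lower semicontinuous convex function is continuous on the interior of its domain, so $f$ is continuous on $X=\inte\dom f$; hence $\partial f(x)\neq\varnothing$ for every $x\in X$, and therefore $\inte\dom\partial f=X$.

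Next I would obtain maximal monotonicity. From~\eqref{PBABA:a1} we have $(0,0)\in\gra T$ (take $x^*=0$), so $0\in\dom T$, while by the previous paragraph $0\in\inte\dom\partial f$. Thus $\dom T\cap\inte\dom\partial f\neq\varnothing$, and part~\ref{BCC:0a2} (which rests on Fact~\ref{lisum:1}, applied to the maximally monotone linear relation $T$) yields that $T+\partial f$ is maximally monotone.

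Finally I would verify the two hypotheses of part~\ref{BCC:3}. Because $\dom f=X$ and $\ran A\subseteq X$, we get $\bigcup_{\lambda>0}\lambda\dom f=X$, whence $\ran A-\bigcup_{\lambda>0}\lambda\dom f=X$, a closed subspace of $X$. Moreover $\dom f^{**}=X^{**}$ forces $A^*x^*\in\dom f^{**}$ for every $x^*\in X^*$, so $\dom\big(f^{**}\circ A^*|_{X^*}\big)=X^*\neq\varnothing$; and since $e\in X^{**}\setminus X$ is nonzero there exists $x^*\in X^*$ with $\langle e,x^*\rangle\neq 0$, so that $X^*\nsubseteqq\{e\}_\bot$. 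Both hypotheses of part~\ref{BCC:3} hold, and that part then delivers the remaining conclusion that $T+\partial f$ is not of type (D).

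The only mildly delicate step is the passage from $\dom f^{**}=X^{**}$ to the continuity of $f$ on $X$ (equivalently $\inte\dom\partial f=X$), which is what feeds part~\ref{BCC:0a2}; the rest is verifying that the two regularity conditions of part~\ref{BCC:3} degenerate to the trivial ones under $\dom f=X$. I expect no real obstacle beyond correctly interpreting $\dom\big(f^{**}\circ A^*|_{X^*}\big)$ as $\{x^*\in X^*\mid A^*x^*\in\dom f^{**}\}$ and confirming $e\neq 0$.
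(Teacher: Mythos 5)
Your proposal is correct and takes essentially the same route as the paper: the paper's proof of part (xiv) likewise deduces $\dom f=X$ from $\dom f^{**}=X^{**}$ (citing Z\u{a}linescu's Theorem~2.3.3 where you invoke Fenchel--Moreau and $f^{**}|_X=f$), observes $\dom f^{**}\circ A^*|_{X^*}=X^*\nsubseteqq\{e\}_\bot$, and then applies parts (x) and (xiii) directly. The details you make explicit --- continuity of $f$ on $X$ so that $\inte\dom\partial f=X$ meets $0\in\dom T$, the closed-subspace condition degenerating to $\ran A-X=X$, and $e\neq 0$ forcing $X^*\nsubseteqq\{e\}_\bot$ --- are precisely the implicit verifications behind the paper's terse ``apply (x)\,\&\,(xiii) directly''.
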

\begin{proof}
\ref{PBAB:em01}: Clearly, $A$ has full domain. Since $A$ is monotone and continuous, $A$
is maximally monotone. By the assumptions that
$e\notin X$ and $\overline{\ran A} \subseteq \overline{X}=X$,
then by Proposition~\ref{ProJon1}, $A$ is neither of type (D) nor unique.
See also \cite[Theorem~14.2.1 and Theorem~13.2.3]{BAU:1} for alternative proof of that $A$ is
not of type (D).

\ref{PBAB:emmaz1}: Now we show that
\begin{align}Px^*=\langle x^*,e\rangle e,\ \forall x^*\in X^*.\label{SuSy:1}
\end{align}
Since $\langle \cdot,e\rangle e=\partial (\tfrac{1}{2}\langle \cdot,e\rangle^2)$
 and by  \cite[Theorem~5.1]{PheSim},
$\langle \cdot,e\rangle e$ is a symmetric operator on $X^*$.
 Clearly, $A-\langle \cdot,e\rangle e$ is skew.
Then \eqref{SuSy:1} holds.

\ref{PBAB:em1}:
Let $x^*\in X^*$ with $ \langle e, x^*\rangle=0$. Then we have
\begin{align*}
Sx^*=\langle x^*,e\rangle e+Sx^*=Px^*+Sx^*=Ax^*\in \ran A\subseteq X.
\end{align*}
Thus \eqref{PBABA:a1} holds and $T$ is well defined.

We have $S$ is skew and hence $T$ is skew.
 Let $(z,z^*)\in X\times X^*$ be monotonically related to $\gra T$.
By Fact~\ref{PF:1}, we have \begin{align}
0&=\langle z, x^*\rangle+\langle -Sx^*,z^*\rangle
=\langle z+Sz^*, x^*\rangle,\quad \forall x^*\in \{e\}_{\bot}.\nonumber
\end{align}
Thus by Fact~\ref{Megg:1}, we have
$ z+Sz^*\in  (\{e\}_{\bot})^{\bot}=\spand\{e\}$ and then
\begin{align}z=-Sz^*+\kappa e,\  \exists \kappa\in\RR.\label{BAB:9}
\end{align}
By $(0,0)\in\gra T$,
\begin{align} \kappa\langle z^*,e\rangle=\langle -Sz^*
+\kappa e, z^*\rangle=\langle z, z^*\rangle\geq0.\label{BAB:10}
\end{align}
Then by \eqref{BAB:9} and \ref{PBAB:emmaz1},  \begin{align}
Az^*=Pz^*+Sz^*=Pz^*+\kappa e-z=\left[\langle z^*,e\rangle+\kappa\right]e-z.\label{BAB:010}
\end{align}
By the assumptions that $z\in X$, $Az^*\in X$ and $e\notin X$,
 $\left[\langle z^*,e\rangle+\kappa\right]=0$ by \eqref{BAB:010}. Then by \eqref{BAB:10}, we have
 $\langle  z^*,e\rangle=\kappa=0$ and thus $(z,z^*)\in\gra T$ by \eqref{BAB:9}.
  Hence $T$ is maximally monotone.

\ref{PBAB:emma1}: Let $(x^{**}_0,x^*_0)\in X^{**}\times X^*$. Then we have
\begin{align*}
&(x^{**}_0,x^*_0)\in \gra T^*\Leftrightarrow \langle x^*_0,
 Sx^*\rangle+\langle x^*, x^{**}_0\rangle=0, \quad \forall x^*\in \{e\}_{\bot}\\
&\Leftrightarrow \langle x^*, x^{**}_0-Sx^*_0\rangle=0, \quad \forall x^*\in \{e\}_{\bot}\\
&\Leftrightarrow  x^{**}_0-Sx^*_0\in (\{e\}_{\bot})^{\bot}
=\spand\{e\}\quad\text{(by Fact~\ref{Megg:1})}\\
&\Leftrightarrow  x^{**}_0-Sx^*_0=re,\quad \exists r\in\RR.
\end{align*}
Thus
$
\gra  T^*=\{(Sx^*+re,x^*)\mid x^*\in X^*,\ r\in\RR\}$.

\ref{PBAB:emma2}:
Since $e\notin X$, we have $e\neq0$. Then there exists $z^*\in X^*$ such that
$z^*\not\in\{e\}_{\bot}$. Then by \ref{PBAB:emmaz1}\&\ref{PBAB:emma1}
 and the assumption that $\ran A\subseteq X$,
 we have \begin{align*}
(Az^*, z^*)=(Sz^*+\langle e, z^*\rangle e, x^*)\in\gra T^*\cap X\times X^*.
\end{align*}
Thus we have
\begin{align*}
\langle Az^*-x, z^*-x^*\rangle &=\langle Az^*, z^*\rangle-
\left[\langle Az^*,x^*\rangle+\langle x,z^*\rangle\right]+\langle x,x^*\rangle\\
 &=\langle Az^*, z^*\rangle\geq0,\quad \forall (x,x^*)\in\gra(-T).
\end{align*}
Hence $(Az^*,z^*)$ is monotonically related to $\gra (-T)$. Since $z^*\notin\ran (-T)$,
 $(Az^*,z^*)\notin\gra (-T)$ and then $-T$ is not maximally monotone.

\ref{PBAB:emm1}:
By \ref{PBAB:emma1}, $T^*$ is not monotone. Then by Fact~\ref{TypeDe:1}, $T$ is not of type (D).

\ref{PBAB:em2}:
By \ref{PBAB:emma1}, we have \begin{align*}
& (z,z^*)\in\gra  (-T^*)\cap X\times X^*\\&\Leftrightarrow
(z,z^*)=(-Sz^*-re,z^*),\quad z\in X,\  \exists r\in\RR, \ z^*\in X^*\\
 &\Leftrightarrow (z,z^*)=(-Sz^*-\langle z^*,e\rangle e+
 \left[\langle z^*,e\rangle -r\right]e,z^*),     \quad z\in X,\ \exists r\in\RR,\  z^*\in X^*\\
  &\Leftrightarrow (z,z^*)=(-A z^*+\left[\langle z^*,e\rangle -r\right]e,z^*),
       \quad z\in X,\ \exists r\in\RR,\  z^*\in X^*\ \text{(by \ref{PBAB:emmaz1})}\\
   &\Leftrightarrow (z,z^*)=(-A z^*,z^*), \quad  \langle z^*,e\rangle =r\
    \text{(since $z, Az^*\in X$ and $e\notin X$)}, \exists r\in\RR,\  z^*\in X^*\\
  &\Leftrightarrow (z,z^*)
 \in\{(-Ax^* ,x^*)\mid x^*\in X^*\}=C. \nonumber
 \end{align*}
Thus by Lemma~\ref{LeSK:a1}, we have
$F_T=\iota_C$.

\ref{PBAB:emu2}:
Since $e\notin X$, we have $e\neq0$. Then there exists $z^*\in X^*$ such that
$z^*\not\in\{e\}_{\bot}$. Thus $z^*\notin\ran T$. By \ref{PBAB:em2},
 $z^*\in P_{X^*}\left[\dom F_T\right]$. Thus, $\gra T\neq\dom F_T$.  Then by
\ref{PBAB:emm1} and Fact~\ref{MAS:UN1}, $T$ is not unique.

\ref{PBAB:emr3}: Suppose to the contrary that $T$ is of type (BR).
Let $z^*$ be as in the proof of \ref{PBAB:emu2}. Then by Lemma~\ref{LeSK:a1} and \ref{PBAB:em2},
 we have $(-Az^*,z^*)\in\gra(-T^*)\cap X\times X^*$ and then
\begin{align*}\inf_{(a,a^*)\in\gra T}\langle
-Az^*-a, z^*-a^*\rangle=\langle
-Az^*, z^*\rangle>-\infty.
\end{align*}
Then Fact~\ref{BRFa:1} shows $z^*\in\overline{\ran T}$, which
contradicts that
 $z^*\notin\{e\}_{\bot}=\overline{\ran T}$.
 Hence $T$ is not of type (BR).

\ref{BCC:0a2}: Apply \ref{PBAB:em1} and  Fact~\ref{lisum:1}.

\ref{BCC:02}:
Clearly, $F$ is a BC--function. By \ref{PBAB:em1} and Fact~\ref{f:Fitz},
we see that $F_T$ is a BC--function.

\ref{BCC:2}:
By \ref{PBAB:em2}, we have
\begin{align}\bigcup_{\lambda>0} \lambda
\big(P_{X^*}(\dom F_T)-P_{X^*}(\dom F)\big)=X^*.\label{Sppl:a01}\end{align}
Then for every $(x,x^*)\in X\times X^*$ and $u\in X$, by \ref{BCC:02},
\begin{equation*}
F_T (x-u,x^*)+ F(u,x^*)=F_T (x-u,x^*)+ (f\oplus f^*)(u,x^*)\geq \langle x-u,x^*\rangle+\langle
u,x^*\rangle=\langle x,x^*\rangle.
\end{equation*}
Hence
\begin{align}(F_T\Box_1F)(x,x^*)\geq\langle x,x^*\rangle>-\infty.\label{Sppl:a1}
\end{align}
Then by \eqref{Sppl:a01}, \eqref{Sppl:a1} and Fact~\ref{BCCFA:1},
\begin{align}
(F_T\Box_1F)^* (v^*_0,v_0)
&=
\min_{x^{**}\in X^{**}}F^*_T ( v^*_0,x^{**})+ F^*( v^*_0,v_0-x^{**})\nonumber\\
&\leq F^*_T ( v^*_0,A^*v^*_0)+ F^*( v^*_0,v_0-A^*v^*_0)\nonumber\\
&=0+  F^*( v^*_0,v_0-A^*v^*_0)\quad\text{(by \ref{PBAB:em2})} \nonumber\\
&=(f\oplus f^*)^*( v^*_0,v_0-A^*v^*_0)
=(f^*\oplus f^{**})( v^*_0,v_0-A^*v^*_0)\nonumber\\
&=f^*(v^*_0)+f^{**}(v_0-A^*v^*_0)\nonumber\\
&<\langle  v^*_0, v_0\rangle\quad\text{(by \eqref{IeSp:3})}. \nonumber
\end{align}
Hence $F_T\Box_1 F$ is not a BC--function.

\ref{BCC:3}:  By the assumption, there exists
$x_0^*\in\dom f^{**}\small\circ A^*|_{X^*}$ such that $\langle e, x_0^*\rangle\neq0$.
Let $\varepsilon_0=\frac{\langle e,x_0^*\rangle^2}{2}$.
By  \cite[Theorem~2.4.4(iii)]{Zalinescu}),
there exists $y^{***}_0\in \partial_{\varepsilon_0}f^{**}(A^*x^*_0)$.
By  \cite[Theorem~2.4.2(ii)]{Zalinescu}),
\begin{align}
f^{**}(A^* x_0^*)+f^{***}(y^{***}_0)\leq\langle A^* x_0^*,y_0^{***}\rangle +\varepsilon_0.
\end{align}
\allowdisplaybreaks
Then by \cite[Lemma~45.9]{Si2} or
the proof of \cite[Eq.(2.5) in Proposition~1]{Rock702}, there exists $y^*_0\in X^*$ such that
\begin{align}
f^{**}(A^* x_0^*)+f^*(y_0^*)<\langle A^*x_0^*, y^*_0\rangle+2\varepsilon_0.\label{IeSp:s1}
\end{align}
Let $z_0^{*}=y_0^{*}+x^*_0$.
  Then by \eqref{IeSp:s1}, we have\begin{align}
f^{**}(A^* x_0^*)+f^*(z_0^*-x^*_0)&
<\langle A^*x_0^*, z^*_0-x^*_0\rangle+2\varepsilon_0\nonumber\\
&=\langle A^*x_0^*, z^*_0\rangle-
\langle A^*x_0^*, x^*_0\rangle+2\varepsilon_0\nonumber\\
&=\langle A^*x_0^*, z^*_0\rangle-
\langle x_0^*, Ax^*_0\rangle+2\varepsilon_0\nonumber\\
&=\langle A^*x_0^*, z^*_0\rangle-2\varepsilon_0+2\varepsilon_0\nonumber\\
&=\langle A^*x_0^*, z^*_0\rangle\label{IeSp:s3}.
\end{align}
Then for every $(x,x^*)\in X\times X^*$ and $u^*\in X$, by \ref{BCC:02},
\begin{equation*}
F_T (x,x^*-u^*)+ F(x,u^*)=F_T (x,x^*-u^*)+
 (f\oplus f^*)(x,u^*)\geq \langle x,x^*-u^*\rangle+\langle
x,u^*\rangle=\langle x,x^*\rangle.
\end{equation*}
Hence
\begin{align}(F_T\Box_2F)(x,x^*)\geq\langle x,x^*\rangle>-\infty.\label{Sppl:sa1}
\end{align}
Then by \eqref{Sppl:sa1}, \ref{PBAB:em2} and Fact~\ref{F4},
\begin{align}
(F_T\Box_2F)^* (z^*_0,A^*x^*_0)
&=
\min_{y^{*}\in X^{*}}F^*_T ( y^*,A^*x^*_0)+ F^*( z^*_0-y^*,A^*x^*_0)\nonumber\\
&\leq F^*_T ( x^*_0,A^*x^*_0)+ F^*(z^*_0-x_0^*,A^*x^*_0)\nonumber\\
&=0+  F^*( z_0^*-x_0^*,A^*x^*_0)\quad\text{(by \ref{PBAB:em2})} \nonumber\\
&=(f\oplus f^*)^*( z^*_0-x_0^*,A^*x^*_0)\nonumber\\
&=f^*(z^*_0-x_0^*)+f^{**}(A^*x^*_0)\nonumber\\
&<\langle z^*_0, A^*x^*_0\rangle\quad\text{(by \eqref{IeSp:s3})}.\label{TBCE:s1}
\end{align}
Let $F_0:X\times X^*\rightarrow\RX$ be defined by
\begin{align}
(x,x^*)\mapsto\langle x,x^*\rangle+\iota_{\gra (T+\partial f)}(x,x^*).
\end{align}
Clearly,
$F_T\Box_2F\leq F_0$ on $X\times X^*$ and thus $(F_T\Box_2F)^*\geq F^*_0$ on $X^*\times X^{**}$.
By \eqref{TBCE:s1},
$F^*_0(z^*_0, A^*x^*_0)<\langle z^*_0, A^*x^*_0\rangle$. Hence $T+\partial f$
is not of type (NI) and thus $T+\partial f$ is not of type (D) by Fact~\ref{PF:Su1}.

\ref{BCC:4}:
 Since $\dom f^{**}=X^{**}$, $\dom f=X$ by \cite[Theorem~2.3.3]{Zalinescu}.
By $\dom f^{**}=X^{**}$ again, $\dom f^{**}
\small\circ A_{\alpha}^*|_{X^*}=X^*\nsubseteqq \{\alpha\}_{\bot}$.
Then apply \ref{BCC:0a2}\&\ref{BCC:3} directly.
\end{proof}

\begin{remark}[Grothendieck spaces \cite{BorVan}] In light of part (xiii) of the previous theorem),  we record  that for a closed convex function $$\dom f = X \mbox{~implies~}  \dom f^{**}=X^{**} \Leftrightarrow X \mbox{~is a \emph{Grothendieck} space.}$$ All reflexive spaces are  Grothendieck spaces while all non-reflexive Grothendieck spaces (such as $L^\infty[0,1]$) contain an isomorphic copy of $c_0$.\qede \end{remark}
We are now ready to exploit Theorem \ref{PBABD:2}.

\section{Examples and applications}\label{EAPT}

We begin with the case of $c_0$ and its dual $\ell^1$.

\subsection{Applications to $c_0$}\label{sec:c}

\begin{example}[$c_0$]\label{FPEX:1}
 Let $ \text{$X: = c_0$, with norm $\|\cdot\|_{\infty}$ so that
  $X^* = \ell^1$ with norm $\|\cdot\|_{1}$,}
$ and  $X^{**}=\ell^{\infty}$  with its second dual norm
$\|\cdot\|_{*}$. Let
$\alpha:=(\alpha_n)_{n\in\NN}\in\ell^{\infty}$ with $\limsup
\alpha_n\neq0$, and let
$A_{\alpha}:\ell^1\rightarrow\ell^{\infty}$ be defined  by
\begin{align}\label{def:Aa}
(A_{\alpha}x^*)_n:=\alpha^2_nx^*_n+2\sum_{i>n}\alpha_n \alpha_ix^*_i,
\quad \forall x^*=(x^*_n)_{n\in\NN}\in\ell^1.\end{align}
\allowdisplaybreaks Now let $ P_{\alpha}$ and $S_{\alpha}$
respectively
  be the symmetric part and antisymmetric
 part of $A_{\alpha}$.  Let $T_{\alpha}:c_{0}\rightrightarrows X^*$  be defined by
\begin{align}\gra T_{\alpha}&
:=\big\{(-S_{\alpha} x^*,x^*)\mid x^*\in X^*,
 \langle \alpha, x^*\rangle=0\big\}\nonumber\\
&=\big\{(-A_{\alpha} x^*,x^*)\mid x^*\in X^*,
 \langle \alpha, x^*\rangle=0\big\}\nonumber\\
&=\big\{\big((-\sum_{i>n}
\alpha_n \alpha_ix^*_i+\sum_{i<n}\alpha_n \alpha_ix^*_i)_n, x^*\big)
\mid x^*\in X^*, \langle \alpha, x^*\rangle=0\big\}.\label{PBABA:Ea1}
\end{align}

Then
\begin{enumerate}
\item\label{BCCE:A01} $\langle A_{\alpha}x^*,x^*\rangle=\langle \alpha , x^*\rangle^2,
\quad \forall x^*=(x^*_n)_{n\in\NN}\in\ell^1$
and\eqref{PBABA:Ea1} is well defined.

\item \label{BCCE:SA01} $A_{\alpha}$ is a maximally monotone
 operator on $\ell^1$ that is neither of type (D) nor unique.

\item\label{BCCE:A1} $T_{\alpha}$
is a maximally monotone  operator on $c_0$ that is not of type (D).

\item\label{BCCE:Ac1} $-T_{\alpha}$
is  not  maximally monotone.

\item \label{BCCE:Au2} $T_{\alpha}$ is neither unique nor of type (BR).

\item\label{BCCE:A2} $F_{T_{\alpha}}\Box_1
 (\|\cdot\|\oplus\iota_{B_{X^*}})$ is not a BC--function.\index{BC--function}
\item  \label{BCCE:A3}  $T_{\alpha}+
\partial \|\cdot\|$ is a maximally monotone operator on $c_0(\NN)$ that is not of type (D).
\item\label{BCCE:A4} If $\tfrac{1}{\sqrt{2}}<\|\alpha\|_*\leq 1$, then
$F_{T_{\alpha}}\Box_1 (\tfrac{1}{2}\|\cdot\|^2\oplus \tfrac{1}{2}\|\cdot\|^2_{1})$
 is not a BC--function.
\item \label{BCCE:A5}  For  $\lambda>0$,  $T_{\alpha}+\lambda J$
 is a maximally monotone operator on $c_0$ that is not of type (D).
\item\label{BCCE:A5a}  Let $\lambda>0$ and
 a linear isometry $L$
 mapping $c_0$ to  a subspace of $C[0,1]$ be given. Then
both  $(L^*)^{-1}(T_{\alpha}+\partial \|\cdot\|)L^{-1}$ and
 $(L^*)^{-1}(T_{\alpha}+\lambda J)L^{-1}$ are
  maximally monotone operators that are  not of type (D).
Hence neither $c_0$ nor $C[0,1]$ is of type (D).
\item \label{BCCE:A06} Every Banach space that contains an isomorphic copy of $c_0$ is not of type (D).

\item \label{BCCE:A6}  Let $G:\ell^1\rightarrow\ell^{\infty} $
 be Gossez's operator  \cite{Gossez1} defined by
\begin{align*}
\big(G(x^*)\big)_n:=\sum_{i>n}x^*_i-
\sum_{i<n}x^*_i,\quad \forall(x^*_n)_{n\in\NN}\in\ell^1.
\end{align*}
Then $T_e: c_0\To\ell^1$ as defined by
\begin{align*}
\gra T_e:=\{(-G(x^*),x^*)\mid x^*\in\ell^1, \langle x^*, e\rangle=0\}
\end{align*}
is a maximally monotone operator that is not of type (D), where
$e:=(1,1,\ldots,1,\ldots)$.

\item \label{BCCE:A7} Moreover, $G$ is a  unique maximally monotone operator that is not of type (D),
but $G$ is of type (BR).

\end{enumerate}
\end{example}
\begin{proof}
We have $\alpha\notin c_0$.
Since $\alpha=(\alpha_n)_{n\in\NN}\in\ell^{\infty}$ and $\|A_{\alpha}\|\leq2\|\alpha\|^2$, $A_{\alpha}$ is
 linear and continuous
  and $\ran A_{\alpha}\subseteq c_0\subseteq \ell^{\infty}$.

 \ref{BCCE:A01}:
 We have
\begin{align}
\langle A_{\alpha}x^*,x^*\rangle&
=\sum_{n}x^*_n(\alpha^2_n x^*_n+2\sum_{i>n}\alpha_n \alpha_ix^*_i)\nonumber\\
&=
\sum_{n}\alpha_n^2{x^*_n}^2
+2\sum_{n}\sum_{i>n}\alpha_n\alpha_i x^*_n x^*_i\nonumber\\
&=\sum_{n}\alpha^2_n{x^*_n}^2+\sum_{n\neq i}\alpha_n\alpha_i x^*_n x^*_i\nonumber\\
&=(\sum_{n}\alpha_n {x^*_n})^2
=\langle \alpha ,x^*\rangle^2,\quad \forall x^*=(x^*_n)_{n\in\NN}\in\ell^1.
\end{align}
Then Theorem~\ref{PBABD:2}\ref{PBAB:emmaz1}
 shows that the symmetric part  $P_{\alpha}$ of $A_{\alpha}$ is
 $P_{\alpha}x^*=\langle \alpha, x^*\rangle \alpha$ (for every $x^*\in\ell^1$).
 Thus, the skew part $S_{\alpha}$ of $A_{\alpha}$ is
\begin{align}
(S_{\alpha}x^*)_n&=(A_{\alpha}x^*)_n-(P_{\alpha}x^*)_n\nonumber\\
&=\alpha^2_nx^*_n+
2\sum_{i>n}\alpha_n \alpha_ix^*_i-\sum_{i\geq 1}\alpha_n\alpha_i x^*_i\nonumber\\
&=\sum_{i>n}\alpha_n \alpha_ix^*_i-\sum_{i<n}\alpha_n \alpha_ix^*_i.\label{ExSupm:1}
\end{align}
Then by Theorem~\ref{PBABD:2}, \eqref{PBABA:Ea1} is well defined.

\ref{BCCE:SA01}: Apply
\ref{BCCE:A01} and Theorem~\ref{PBABD:2}\ref{PBAB:em01} directly.

\ref{BCCE:A1}: Combine Theorem~\ref{PBABD:2}\ref{PBAB:em1}\&\ref{PBAB:emm1}.

\ref{BCCE:Ac1}: Apply Theorem~\ref{PBABD:2}\ref{PBAB:emma2} directly.

\ref{BCCE:Au2}: Apply
Theorem~\ref{PBABD:2}\ref{PBAB:emu2}\&\ref{PBAB:emr3}.

\ref{BCCE:A2}
 Since $\alpha\neq0$,
 there exists $i_0\in\NN$ such that $\alpha_{i_0}\neq0$.
Let $e_{i_0}:=(0,\ldots,0,1,0,\ldots)$, i.e., the $i_0$th is $1$ and the others are $0$.
 Then by \eqref{ExSupm:1}, we have
\begin{align}
S_{\alpha}e_{i_0}=\alpha_{i_0}(\alpha_1,\ldots,
\alpha_{i_0-1},0,-\alpha_{i_0+1},-\alpha_{i_0+2},\ldots).
\end{align}
Then
\begin{align}A^*e_{i_0}&=P_{\alpha}e_{i_0}-S_{\alpha}e_{i_0}\nonumber\\
&=\alpha_{i_0}(0,\ldots,0,\alpha_{i_0},2\alpha_{i_0+1}, 2\alpha_{i_0+2},
\ldots).\label{ExSupm:2}
\end{align}
Now set $v^*_0:=e_{i_0}$ and $v_0:=3\|\alpha\|^2_* e_{i_0}$. Thus by
\eqref{ExSupm:2},
\begin{align}
v_0-A^*v^*_0&= 3\|\alpha\|^2_*e_{i_0}-A^*e_{i_0}\nonumber\\
&=(0,\ldots,0,3\|\alpha\|^2_*-\alpha^2_{i_0},-2\alpha_{i_0}\alpha_{i_0+1},
 -2\alpha_{i_0}\alpha_{i_0+2},\ldots).
\label{ExSupm:3}
\end{align}
Let $f:=\|\cdot\|$ on $X=c_0$. Then
$f^*=\iota_{B_{X^*}}$ by \cite[Corollary~2.4.16]{Zalinescu}.
We have
\begin{align*}
f^*(v^*_0)+f^{**}(v_0-A^*e_{i_0})&=\iota_{B_{X^*}}(e_{i_0})+\|v_0-A^*e_{i_0}\|_* \\
&=\lVert{3\|\alpha\|_*e_{i_0}-A^*e_{i_0}}\rVert_* \\
&<3\|\alpha\|^2_*\quad \text{(by \eqref{ExSupm:3})}\\
&=\langle v_0,v^*_0\rangle.
\end{align*}
Hence by Theorem~\ref{PBABD:2}\ref{BCC:2},
$F_{T_{\alpha}}\Box_1 (\|\cdot\|\oplus\iota_{B_{X^*}})$ is not a BC--function.

\ref{BCCE:A3}:  Let $f:=\|\cdot\|$ on $X$.
Since $\dom f^{**}=X^{**}$.
Then apply Theorem~\ref{PBABD:2}\ref{BCC:4}.

\ref{BCCE:A4}:
 By $\tfrac{1}{\sqrt{2}}<\|\alpha\|_*\leq 1$,
take $|\alpha_{i_0}|^2>\tfrac{1}{2}$.
Let $e_{i_0}$ be defined as
in the proof of \ref{BCCE:A2}.
Then
take $v^*_1:=\frac{1}{2} e_{i_0}$
and $v_1:=\big(1+\frac{1}{2}\alpha^2_{i_0}\big) e_{i_0}$.

By \eqref{ExSupm:2}, we have
\begin{align}
&v_1-A^*v^*_1
=(0,\ldots,0,1,-\alpha_{i_0}\alpha_{i_0+1},-\alpha_{i_0}\alpha_{i_0+2}, \ldots).
\end{align}
Since $\left| \alpha_{i_0}\alpha_{j}\right|\leq\|\alpha\|^2_*\leq1,
\ \forall j\in\NN$, then
\begin{align}
\| v_1-A^*v^*_1\|_*\leq 1.\label{ExSupm:5}\end{align}
 Let  $f:=\tfrac{1}{2}\|\cdot\|^2$ on $X=c_0$. Then
$f^*=\tfrac{1}{2}\|\cdot\|^2_{1}$ and $f^{**}=\tfrac{1}{2}\|\cdot\|_*^2$.
\allowdisplaybreaks
We have
\begin{align*}
f^*(v^*_1)+f^{**}(v_1-A^*v^*_1)&=\tfrac{1}{2}\|v^*_1\|^2_{1}
+ \tfrac{1}{2}\| v_1-A^*v^*_1\|^2_*\\
&\leq\tfrac{1}{8}+ \tfrac{1}{2}\quad\text{(by\  \eqref{ExSupm:5})}\\
&<\tfrac{\alpha^2_{i_0}}{4}+ \tfrac{1}{2}\quad
\text{(since $\alpha^2_{i_0}>1/2$)}\\
&=\langle v^*_1,v_1\rangle.
\end{align*}
Hence by Theorem~\ref{PBABD:2}\ref{BCC:2},
$F_{T_{\alpha}}\Box_1
 (\tfrac{1}{2}\|\cdot\|^2\oplus\tfrac{1}{2}\|\cdot\|^2_*)$ is not a BC--function.

\ref{BCCE:A5}: Let $\lambda>0$ and
$f:=\tfrac{\lambda}{2}\|\cdot\|^2$ on $X=c_0$. Then
$f^{**}=\tfrac{\lambda}{2}\|\cdot\|^2_*$.
 Then apply Theorem~\ref{PBABD:2}\ref{BCC:4}.

\ref{BCCE:A5a}: Since $c_0$ is separable
by \cite[Example~1.12.6]{Megg} or \cite[Proposition~1.26(ii)]{FabianHH},
by Fact~\ref{isom:1a}, there exists a linear operator $L: c_0\rightarrow C[0,1]$
 that is an isometry from $c_0$ to  a subspace of $C[0,1]$.
Then combine \ref{BCCE:A3}\&\ref{BCCE:A5}
and Corollary~\ref{Simonco:2}.

\ref{BCCE:A06} Combine \ref{BCCE:A1} (or \ref{BCCE:A3} or \ref{BCCE:A5})
 and Corollary~\ref{Simonco:2}.

\ref{BCCE:A6}: To obtain the result on $T_e$, directly apply
\ref{BCCE:A1} (or see \cite[Example~5.2]{BB}).

\ref{BCCE:A7}  Now $-G$ is type (D) but $G$ is not \cite{BB}. To see
that $G$ is unique, note that $-G^*$ is monotone by Fact
\ref{TypeDe:1} and so provides the unique maximal extension. Since
$G$ is skew and continuous, clearly, $-G^*x^*=Gx^*, \forall
x^*\in\ell^1$. Then Lemma~\ref{MSCBR:L1} implies that $G$ is of
type (BR).  The uniqueness of $G$ was also verified in \cite[Example~14.2.2]{BAU:1}.
\end{proof}

\begin{remark}
The maximal monotonicity of the operator $T_{e}$ in Example~\ref{FPEX:1}\ref{BCCE:A6}
 was also  verified by Voisei and Z{\u{a}}linescu in
\cite[Example~19]{VZ} and  later a direct proof given by Bueno and
Svaiter in  \cite[Lemma~2.1]{BuSv}. Herein we have given a more
concise proof of above results.

 Bueno and Svaiter also showed  that
$T_{e}$ is not of type (D) in \cite{BuSv}. They  also showed
 that each Banach space that contains an isometric (isomorphic) copy of $c_0$
  is not of type (D) in \cite{BuSv}.
Example~\ref{FPEX:1}\ref{BCCE:A06} recaptures their result, while
Example~\ref{FPEX:1}\ref{BCCE:A2}\&\ref{BCCE:A4}
 provide a negative answer to Simons'
    \cite[Problem~22.12]{Si2}.
    \qede
\end{remark}

\begin{remark}[The continuous case] \label{rem:lat} We recall that a Banach space $X$
is a \emph{conjugate monotone space} if every continuous linear
monotone operator on  $X$ has a monotone conjugate.  In particular
this holds if every continuous linear
monotone operator on  $X$ is weakly compact.
 In consequence, a Banach lattice  $X$ contains a complemented copy of
 $\ell^1$ if and only if it admits a non (D) continuous linear
 monotone operator, on using Fact \ref{TypeDe:1}
  along with \cite[Remark 5.5]{BB} and   \cite[Examples. 5.2 and 5.3]{BB}.

 Thus, in lattices such as $c_0$, $c$ and $C[0,1]$ only
 discontinuous linear monotone operators can fail to be  of type (D). This subtlety
 escaped the current authors for fifteen years.
    \qede
\end{remark}

We now turn to a broader class of spaces:

\subsection{Applications to more general nonreflexive
spaces}\label{sec:j}

Our results below are facilitated by making use of Schauder basis
structure \cite{FabianHH2}.

\begin{definition}
We say $(e_n,e_n^*)_{n\in\NN}$ in $X\times X^*$ is  a \emph{Schauder
basis} of $X$ if for every $x\in X$
 there exists a unique sequence $(\alpha_n)_{n\in\NN}$ in
$\RR$ such that
$ x =\sum_{n\geq1}\alpha_n e_n$, where $\alpha_n
=\langle x,e^*_n\rangle$ and $\langle e_i, e^*_j\rangle
=\delta_{i,j}, \forall i,j\in\NN$.
\end{definition}
\begin{definition}
Let $(e_n,e_n^*)_{n\in\NN}$ in $X\times X^*$ be a Schauder basis of
$X$.  We say the basis is \emph{shrinking} if
$\overline{\spand\{e^*_n\mid n\in\NN\}}=X^*$.
\end{definition}

In particular, a Banach space with a shrinking basis has a separable
dual and so is an Asplund space  \cite{FabianHH2}.

\begin{fact}
\emph{(See \cite[Lemma~4.7(iii) and
Facts~4.11(ii)\&(iii)]{FabianHH2} or \cite[Lemma~6.2(iii) and
Facts~6.6(ii)\&(iii)]{FabianHH} .)}\label{SCh:1} Let
$(e_n,e_n^*)_{n\in\NN}$ in $X\times X^*$ be a Schauder basis of $X$.
Then
\begin{enumerate}
\item \label{SCh:1a}$\lim_{n}\sum_{i=1}^{n}\langle x,e^*_i\rangle e_i
=x,\quad \forall x\in X$;
\item\label{SCh:1b} $\sum_{i=1}^{n}\langle x^*,e_i\rangle e^*_i$ weak$^*$
 converges to $x^*$, written as,
$\sum_{i=1}^{n}\langle x^*,e_i\rangle e^*_i\weakstarly x^*,\quad
\forall x^*\in X^*$;
\item\label{SCh:1c} $(e_n^*, e_n)_{n\in\NN}$ in $X^*\times X^{**}$
is a Schauder basis of $\overline{\spand\{e^*_n\mid n\in\NN\}}$.
\end{enumerate}
\end{fact}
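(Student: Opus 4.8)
The plan is to derive all three assertions from the defining property of a Schauder basis together with a single application of the uniform boundedness principle. Part \ref{SCh:1a} is essentially a restatement of the definition: by hypothesis each $x\in X$ admits the expansion $x=\sum_{n\geq1}\alpha_n e_n$ with $\alpha_n=\langle x,e^*_n\rangle$, and convergence of this series is by definition convergence of its partial sums, which is exactly $\sum_{i=1}^{n}\langle x,e^*_i\rangle e_i\to x$.

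The central object for the remaining parts is the family of coordinate projections $P_n\colon X\to X\colon x\mapsto\sum_{i=1}^{n}\langle x,e^*_i\rangle e_i$. First I would record that each $P_n$ is linear and bounded (a finite-rank operator) and that $P_nx\to x$ for every $x$ by part \ref{SCh:1a}. The key step is then to establish that $\sup_n\|P_n\|<\infty$: since $(P_nx)_n$ converges, hence is bounded, for every $x$, the uniform boundedness principle yields a finite basis constant $M:=\sup_n\|P_n\|$. (Alternatively one equips $X$ with the norm $x\mapsto\sup_n\|P_nx\|$, checks that it is complete and dominates $\|\cdot\|$, and invokes the open mapping theorem.) I expect this uniform bound to be the only genuinely nontrivial ingredient; everything else is bookkeeping with the adjoints.

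For part \ref{SCh:1b}, I would pass to the adjoints $P_n^*\colon X^*\to X^*$. A direct computation gives $P_n^*x^*=\sum_{i=1}^{n}\langle x^*,e_i\rangle e^*_i$, and for any $x\in X$ one has $\langle x,P_n^*x^*\rangle=\langle P_nx,x^*\rangle\to\langle x,x^*\rangle$ by part \ref{SCh:1a}; since $x$ is arbitrary this is precisely weak$^*$ convergence $P_n^*x^*\weakstarly x^*$, which is the claim.

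For part \ref{SCh:1c}, set $Z:=\overline{\spand\{e^*_n\mid n\in\NN\}}$. Biorthogonality $\langle e^*_i,e_j\rangle=\delta_{i,j}$ holds because $e_j\in X\subseteq X^{**}$ and $\langle e_j,e^*_i\rangle=\delta_{j,i}$. Computing $P_n^*e^*_j=\sum_{i=1}^{n}\langle e_i,e^*_j\rangle e^*_i$ shows $P_n^*e^*_j=e^*_j$ once $n\geq j$ and $P_n^*e^*_j=0$ for $n<j$; hence on the dense subspace $\spand\{e^*_n\mid n\in\NN\}$ one has $P_n^*w^*\to w^*$ in norm. Combining this with the uniform bound $\|P_n^*\|=\|P_n\|\leq M$ via the standard $3\varepsilon$ density argument gives norm convergence $P_n^*z^*\to z^*$ for every $z^*\in Z$, i.e. $z^*=\sum_{n\geq1}\langle z^*,e_n\rangle e^*_n$; finally, uniqueness of the coefficients follows from biorthogonality, since norm convergence permits applying $\langle\cdot,e_j\rangle$ term by term. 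Thus $(e^*_n,e_n)_{n\in\NN}$ is a Schauder basis of $Z$, which is the assertion.
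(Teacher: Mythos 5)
Your proof is correct, and it matches the approach in the sources: the paper itself gives no proof of this Fact, citing it verbatim from the Fabian et al.\ texts, and your route --- the coordinate projections $P_n$, Banach--Steinhaus for the uniform basis constant $M=\sup_n\|P_n\|$, the adjoint identity $P_n^*x^*=\sum_{i=1}^{n}\langle x^*,e_i\rangle e_i^*$ for (ii), and the density-plus-uniform-boundedness argument on $\overline{\spand\{e^*_n\mid n\in\NN\}}$ together with biorthogonality for (iii) --- is precisely the standard argument in those cited references. Two points you handle correctly and are worth keeping explicit: part (ii) needs no uniform bound at all (pointwise weak$^*$ convergence is immediate from (i)), and the continuity of the coordinate functionals, usually the one delicate ingredient, is free here because the paper's definition of a Schauder basis already places each $e_n^*$ in $X^*$.
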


\begin{lemma}\label{LemJM:1}
Let $(e_n,e_n^*)_{n\in\NN}$ in $X\times X^*$ be a Schauder basis of
$X$. Then $e_n^*\weakstarly 0$ whenever $\liminf_{n \in
\NN}\|e_n\|>0$.
\end{lemma}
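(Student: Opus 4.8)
The plan is to reduce the weak* statement $e_n^* \weakstarly 0$ to the pointwise assertion that $\langle x, e_n^* \rangle \to 0$ for every $x \in X$, and then to extract that limit directly from the convergence of the basis expansion together with the lower bound on $\|e_n\|$. First I would fix an arbitrary $x \in X$. By the definition of a Schauder basis (equivalently, Fact~\ref{SCh:1}\ref{SCh:1a}) we have the norm-convergent expansion $x = \sum_{n \geq 1} \langle x, e_n^* \rangle e_n$. Since the terms of a convergent series in a Banach space tend to zero, this yields
\[
\|\langle x, e_n^* \rangle e_n\| = |\langle x, e_n^* \rangle|\,\|e_n\| \to 0 \quad (n \to \infty).
\]

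Next I would invoke the hypothesis $\liminf_{n \in \NN}\|e_n\| > 0$. By the definition of $\liminf$, there exist $\delta > 0$ and $N \in \NN$ with $\|e_n\| \geq \delta$ for all $n \geq N$. Dividing the displayed limit by $\|e_n\|$ then gives, for $n \geq N$,
\[
|\langle x, e_n^* \rangle| \leq \delta^{-1}\, |\langle x, e_n^* \rangle|\,\|e_n\| \to 0,
\]
so that $\langle x, e_n^* \rangle \to 0$. As $x \in X$ was arbitrary, this is exactly the assertion $e_n^* \weakstarly 0$, completing the argument.

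The reasoning is elementary, and I do not expect a genuine obstacle. The only point demanding a little care is the transition from the hypothesis $\liminf_n \|e_n\| > 0$ to an \emph{eventual} uniform bound $\|e_n\| \geq \delta$ for $n \geq N$: one must use this eventual lower bound, since $\|e_n\|$ need not be bounded below for every single $n$, and the boundedness of $(e_n^*)$ in $X^*$ — which one might initially think weak* convergence requires — is in fact unnecessary, because weak* convergence of a sequence is by definition precisely the pointwise convergence already established.
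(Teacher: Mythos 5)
Your proof is correct and follows essentially the same route as the paper: the paper likewise fixes $x\in X$, notes via Fact~\ref{SCh:1}\ref{SCh:1a} that $\|\langle x,e_n^*\rangle e_n\|\to 0$, and combines this with $\liminf_n\|e_n\|>0$ to conclude $\langle x,e_n^*\rangle\to 0$. You have merely made explicit the eventual lower bound $\|e_n\|\geq\delta$ for $n\geq N$, a detail the paper leaves implicit.
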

\begin{proof} Let $x\in X$.
Since $\|\langle x,e_n^*\rangle e_n\|\rightarrow0$ because of
Fact~\ref{SCh:1}\ref{SCh:1a}, and since $\liminf_{n \in
\NN}\|e_n\|>0$, we have $\langle x,e_n^*\rangle\rightarrow0$. Hence
$e_n^*\weakstarly 0$ as $n \rightarrow \infty$.
\end{proof}

The proof of Example~\ref{FPEX:B1}(i) was  inspired by that
\cite[Proposition~3.5]{BWY7}.

\begin{example}[Schauder basis]\label{FPEX:B1}
Let $(e_n,e_n^*)_{n\in\NN}$ in $X\times X^*$ be a Schauder basis of
$X$. Assume that for some $e \in X^{**}$ we have \begin{align}
\label{eq:lim}\sum_{i=1}^{n}e_i\weakstarly e\in X^{**}.\end{align}
Let $A:X\rightrightarrows X^*$ be defined by
\begin{align*}
\gra A:=\bigg\{\bigg(\sum_{n}\big(-\sum_{i>n}\langle e_i,y^*\rangle+
\sum_{i<n}\langle e_i,y^*\rangle\big)e_n,y^*\bigg)\in X\times X^*\mid y^*\in\{e\}_\bot \bigg\}.
\end{align*} Assume that $\liminf\|e_n\|>0$.
Then the following hold.
\begin{enumerate}
\item[(i)]\label{PBABA:EB2} $A$ is a maximally monotone and linear skew
operator.
\item[(ii)]\label{PBABA:EBur4} $A$ is not of type (BR).
\item[(iii)]\label{PBABA:EB3} $A$ is not of type (D).
\item[(iv)]\label{PBABA:EBu4} $A$ is not unique.

\item[(v)]\label{PBABA:EB4} Every Banach space  containing a copy of $X$ is not of type (D).
\end{enumerate}

\begin{proof}
(i): First, we show $A$ is skew.
 Let $(y,y^*)\in\gra A$. Then $\langle e, y^*\rangle=0$ and\\ $y
 =\displaystyle\sum^{\infty}_{n=1}\big(-\sum_{i>n}\langle e_i,y^*\rangle+
\sum_{i<n}\langle e_i,y^*\rangle\big)e_n$.  By the assumption that
 $\sum_{i=1}^{n}e_i\weakstarly e\in X^{**}$, we have
 \begin{align}
 s:=\sum_{i\geq 1}\langle e_i,y^*\rangle=\langle e,y^*\rangle=0.\label{EXJM:1}
 \end{align}
Thus,
\allowdisplaybreaks
\begin{align}
\langle y,y^*\rangle&=
\langle \sum_{n}\big(-\sum_{i>n}\langle e_i,y^*\rangle+
\sum_{i<n}\langle e_i,y^*\rangle\big)e_n,y^*\rangle\nonumber\\
&=\lim_{k} \langle \sum^{k}_{n=1}\big(-\sum_{i>n}\langle e_i,y^*\rangle+
\sum_{i<n}\langle e_i,y^*\rangle\big)e_n,y^*\rangle\quad \text{(by Fact~\ref{SCh:1}\ref{SCh:1a})}\nonumber\\
&=\lim_{k} \sum^{k}_{n=1}\big(-\sum_{i>n}\langle e_i,y^*\rangle+
\sum_{i<n}\langle e_i,y^*\rangle\big)\langle e_n,y^*\rangle\nonumber\\
&=-\lim_{k} \sum^{k}_{n=1}\big(\sum_{i>n}\langle e_i,y^*\rangle-
\sum_{i<n}\langle e_i,y^*\rangle\big)\langle e_n,y^*\rangle\nonumber\\
&=-\lim_{k} \sum^{k}_{n=1}\big(\sum_{i\geq n+1}\langle e_i,y^*\rangle+
\sum_{i\geq n}\langle e_i,y^*\rangle\big)\langle e_n,y^*\rangle
\quad\text{(by \eqref{EXJM:1})}\label{EXJM:b1}\\
& =-\lim_{k} \bigg(\langle e_1,y^*\rangle\sum_{i\geq 1}\langle e_i,y^*\rangle
+\langle e_2,y^*\rangle\sum_{i\geq 2}\langle e_i,y^*\rangle+\cdots
+\langle e_k,y^*\rangle\sum_{i\geq k}\langle e_i,y^*\rangle\nonumber\\
&\quad
 +\langle e_1,y^*\rangle\sum_{i\geq 2}\langle e_i,y^*\rangle
 +\langle e_2,y^*\rangle\sum_{i\geq 3}\langle e_i,y^*\rangle+\cdots+
 \langle e_k,y^*\rangle\sum_{i\geq k+1}\langle e_i,y^*\rangle\bigg)\nonumber\\
& =-\lim_{k} \bigg(s\langle e_1,y^*\rangle+(s-\langle e_1,y^*\rangle)\langle e_2,y^*\rangle+\cdots+
(s-\sum_{i=1}^{k-1}\langle e_i,y^*\rangle)\langle e_k,y^*\rangle\nonumber\\
&\quad
 +(s-\langle e_1,y^*\rangle)\langle e_1,y^*\rangle+\big(s-\sum_{i=1}^{2}
 \langle e_i,y^*\rangle\big)\langle e_2,y^*\rangle+\cdots+
(s-\sum_{i=1}^{k}\langle e_i,y^*\rangle)\langle e_k,y^*\rangle\bigg)\nonumber\\
&=-\lim_{k}\bigg(s\sum_{i=1}^{k}\langle e_i,y^*\rangle-\langle e_1,y^*\rangle
 \langle e_2,y^*\rangle-\sum_{i=1}^{2}\langle e_i,y^*\rangle\langle e_3,y^*\rangle
 -\cdots -\sum_{i=1}^{k-1}\langle e_i,y^*\rangle\langle e_k,y^*\rangle
  \nonumber\\ &\quad +
   s\sum_{i=1}^{k}\langle e_i,y^*\rangle-\sum_{i=1}^{k}\langle e_i,y^*\rangle^2
   -\langle e_1,y^*\rangle \langle e_2,y^*\rangle
   -\cdots -\sum_{i=1}^{k-1}\langle e_i,y^*\rangle\langle e_k,y^*\rangle\bigg)\nonumber\\
& =-\lim_{k}\left[2s\sum_{i=1}^{k}\langle e_i,y^*\rangle
-(\sum_{i=1}^{k}\langle e_i,y^*\rangle)^2\right]\nonumber\\
&=-(2s^2-s^2)=-s^2=0.\quad\text{(by \eqref{EXJM:1})}\nonumber
\end{align}
Hence $A$ is skew.

 To show maximality,  let $(x,x^*)\in X\times X^*$ be monotonically
related to $\gra A$. By Fact~\ref{PF:1}, we have
\begin{align}
&\langle y^*,x\rangle+\langle x^*,y\rangle=0,
 \quad \forall (y,y^*)\in\gra A.\label{EL:3}\end{align}
 By \eqref{eq:lim}, we have
 \begin{align}\langle e, e_n^*\rangle
 =\sum_{i\geq 1}\langle  e_i, e_n^*\rangle=\delta_{n,n} =1,\quad\forall n\in\NN.\label{perpro}
 \end{align}
Let $y^*:=- e_1^*+ e_n^*$ ($n\geq 2$) and $y:=-e_1-
2\sum_{i=2}^{n-1}e_i-e_n$.
By \eqref{perpro}, we have  $\langle e, y^*\rangle=0$.
Hence $y^*\in\{e\}_{\bot}$
 and $(y,y^*)\in\gra A$.
Using \eqref{EL:3},
\begin{align*}
&-\langle x,e_1^*\rangle+\langle x,e_n^*\rangle -\langle x^*,e_1\rangle
-\langle x^*,e_n\rangle-2\sum_{i=2}^{n-1}\langle x^*,e_i\rangle=0.
\end{align*}
Thus, we have
\begin{align}
&\langle x,e_n^*\rangle=\langle x,e_1^*\rangle
-\langle x^*,e_1\rangle+\langle x^*,e_n\rangle+2\sum_{i=1}^{n-1}\langle x^*,e_i\rangle.
\label{EL:4}
\end{align}
As $\sum_{i\geq 1}\langle e_i,z^*\rangle=\langle e,z^*\rangle
(\forall z^*\in X^*)$, we have $\langle x^*,e_n\rangle\rightarrow0$.

Hence, by Lemma~\ref{LemJM:1} --- since $\liminf\|e_n\|>0$ --- and
\eqref{EL:4},
\begin{align}-2\sum_{i\geq1}\langle x^*,e_i\rangle=\langle
x,e_1^*\rangle -\langle x^*,e_1\rangle.\label{EL:5}\end{align} Next
we show $-2\sum_{i\geq1}\langle x^*,e_i\rangle=\langle
x,e_1^*\rangle -\langle x^*,e_1\rangle=0$. Let
$t=\sum_{i\geq1}\langle x^*,e_i\rangle$. Then by \eqref{EL:4} and
\eqref{EL:5},
\begin{align}
x&=\sum_{n\geq1}\langle x,e^*_n\rangle e_n\nonumber\\
&=\sum_{n\geq1}
\bigg(-2\sum_{i\geq1}\langle x^*,e_i\rangle
+2\sum_{i<n}\langle x^*,e_i\rangle+
\langle x^*,e_n\rangle\bigg)e_n\nonumber\\
&=\sum_{n\geq1}\bigg(-2\sum_{i\geq n}\langle x^*,e_i\rangle
+\langle x^*,e_n\rangle\bigg)e_n\nonumber\\
&=
\sum_{n\geq1}\bigg(-\sum_{i\geq
n}\langle x^*,e_i\rangle-\sum_{i\geq n}\langle x^*,e_i\rangle
+\langle x^*,e_n\rangle\bigg)e_n\nonumber\\
&=\sum_{n\geq1}\bigg(-\sum_{i\geq n}\langle x^*,e_i\rangle
-\sum_{i\geq n+1}\langle x^*,e_i\rangle\bigg)e_n.
\label{EL:6}\end{align}

Using $(0,0)\in\gra A$,  as in the proof of \eqref{EXJM:b1}, shows
\begin{align*}
0\geq-\scal{x^*}{x}&
=\langle \sum_{n\geq1}\bigg(\sum_{i\geq n}\langle x^*,e_i\rangle
+\sum_{i\geq n+1}\langle x^*,e_i\rangle\bigg)e_n,x^*\rangle\\
 & =\lim_{k}\langle \sum_{n=1}^{k}\bigg(\sum_{i\geq n}\langle x^*,e_i\rangle
 +\sum_{i\geq n+1}\langle x^*,e_i\rangle\bigg)e_n,x^*\rangle\\
&=2t^2-t^2=t^2.
\end{align*}
Hence $t=0$.
 By \eqref{EL:6},
\begin{align*}x=\sum_{n\geq1}
\bigg(-\sum_{i>n}\langle x^*,e_i\rangle+\sum_{i<n}\langle x^*,e_i\rangle\bigg)e_n.
\end{align*}
Hence $(x,x^*)\in\gra A$. Thus, $A$ is maximally monotone.

(ii): Suppose to the contrary that $A$ is of type (BR). One checks that
$(e_1,e^*_1)\in\gra A^*$ and
 $\langle e, e^*_1\rangle=\lim_{n}\langle \sum_{i=1}^{n} e_i, e^*_1\rangle=1$. Thus,
 $(e_1,-e^*_1)\in\gra(-A^*)\cap X\times X^*$ and $-e_1^*\notin\{e\}_{\bot}$.
 Since $\overline{\ran A}\subseteq \{e\}_{\bot}$,
 $-e^*_1\notin\overline{\ran A}$. Then
 $\inf_{(a,a^*)\in\gra A}\langle e_1-a,-e^*_1-a^*\rangle=\langle e_1,-e^*_1\rangle=-1>-\infty$.
Then by Fact~\ref{BRFa:1}, $-e^*_1\in\overline{\ran A }$, which contradicts that
 $-e^*_1\notin\overline{\ran A}$. Hence $A$ is not of type (BR).

(iii): By Fact~\ref{MAS:BR1} and (ii),   $A$ is not of type (NI) and
hence $A$ is not of type (D) by Fact~\ref{PF:Su1}.
\emph{Alternative Proof:} Clearly, $(e,0)\in\gra A^*$ and thus
$e\in\ker A^*$. By the proof of (ii), $(e_1,e^*_1)\in\gra A^*$ and
 $\langle e, e^*_1\rangle=1$.
Hence $e\notin (\ran A^*)^{\bot}$. Hence $A^*$ is not monotone by
Lemma~\ref{PF:A1}. Then  Fact~\ref{TypeDe:1} shows $A$ is not of
type (D).

(iv): Apply (iii)\&(ii) and Corollary~\ref{MSCUni:1} directly.

(v): Combine (i)\&(iii)
 and Corollary~\ref{Simonco:2}.
\end{proof}
\end{example}

We shall especially exploit the lovely properties of the James
space:

\begin{definition}
The \emph{James space}, $\J$,  consists of all the sequences
$x=(x_n)_{n\in\NN}$ in $c_0$ with the finite norm
\begin{align*}
\|x\|:=\sup_{n_1<\cdots<n_k}\big((x_{n_1}-x_{n_2})^2+(x_{n_2}-x_{n_3})^2+
\cdots+(x_{n_{k-1}}-x_{n_k})^2\big)^{\tfrac{1}{2}}.
\end{align*}
\end{definition}

\begin{fact} \emph{(See \cite[page~205]{FabianHH2}
 or \cite[Claim, page~185]{FabianHH}.)} \label{fact:J} The space $\J$ is
constructed to be of codimension-one in $\J^{**}$. Indeed,
$\J^{**}=\J \oplus \spand\{e\}$ where $e:=(1,1,\ldots,1,\ldots)$ is the
constant sequence in $c(\NN) \subset \ell^\infty$. Thus, $\J$
is a separable Asplund space, equivalently $\J^*$ is separable
\cite{BorVan,FabianHH2, FabianHH}, and non-reflexive. Inter alia, the basis
$(e_n,e_n^*)_{n\in\NN}$ is a shrinking Schauder basis in $\J$ and
 $(e_n^*,e_n)_{n\in\NN}$ is a basis for $\J^*$,
 where $e_{n}=(0,\ldots,0,1,0,\ldots)$, i.e., the $n$th is $1$ and the others are $0$.
\end{fact}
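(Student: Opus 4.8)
The plan is to recover these classical properties of the James space by exhibiting $(e_n)_{n\in\NN}$ as a shrinking Schauder basis of $\J$ and then computing the bidual directly; the codimension-one statement and non-reflexivity will fall out of that bidual computation.

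First I would verify that $(e_n)_{n\in\NN}$ is a Schauder basis of $\J$. Since each $x\in\J$ lies in $c_0$, its coordinates tend to $0$, and the James norm dominates $\|\cdot\|_\infty$; a direct estimate---truncating a sequence alters only a bounded number of the consecutive-difference terms defining the norm---shows that the partial sums $\sum_{i=1}^{n}\langle x,e_i^*\rangle e_i$ converge to $x$ in $\|\cdot\|_{\J}$ with uniformly bounded basis projections. Next I would prove the basis is shrinking, i.e. $\overline{\spand\{e_n^*\mid n\in\NN\}}=\J^*$, via the tail criterion: for each $x^*\in\J^*$ the norm of the restriction of $x^*$ to $\overline{\spand\{e_k\mid k>n\}}$ must tend to $0$. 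For the James norm this holds because a normalized block sequence supported on late coordinates realizes only finitely many ``jumps'' at a time and so cannot carry a fixed positive amount of the functional's mass arbitrarily far out. Once shrinking is in hand, Fact~\ref{SCh:1}\ref{SCh:1c} yields that $(e_n^*,e_n)_{n\in\NN}$ is a Schauder basis of $\J^*$; in particular $\J^*$ is separable, so $\J$ is Asplund.

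The crux is the computation of $\J^{**}$. Via the shrinking basis, each $x^{**}\in\J^{**}$ is determined by the scalars $\xi_n:=\langle x^{**},e_n^*\rangle$ and acts as the weak$^*$ limit of $\sum_{i=1}^n\xi_i e_i$. The key point is to show that finiteness of $\sup_k\big\|\sum_{i=1}^k\xi_i e_i\big\|_{\J}$ forces $(\xi_n)$ to \emph{converge} to some limit $\ell\in\RR$ (rather than to tend to $0$, as elements of $\J$ do). Writing $\xi_n=(\xi_n-\ell)+\ell$, the sequence $(\xi_n-\ell)$ has vanishing coordinates and finite James norm, hence lies in $\J$, while the constant remainder is $\ell\,e$ with $e=(1,1,\ldots)$. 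This produces the decomposition $\J^{**}=\J\oplus\spand\{e\}$, so $\J$ sits as a codimension-one subspace of $\J^{**}$ and is therefore non-reflexive.

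The main obstacle I anticipate is precisely this last step: showing that a uniform bound on $\|\sum_{i=1}^k\xi_i e_i\|_{\J}$ forces the coordinates $(\xi_n)$ to converge. The point is that this bound controls the $\ell^2$-type variation $\sum_{j}(\xi_{n_j}-\xi_{n_{j+1}})^2$ over all finite increasing index selections $n_1<\cdots<n_k$; a sequence of bounded $\ell^2$-variation that failed to converge would oscillate enough to drive this sum to infinity, a contradiction. The common limit then supplies exactly the single extra dimension spanned by the constant sequence $e$, completing the identification of $\J^{**}$ and hence all the asserted properties.
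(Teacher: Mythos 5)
The paper offers no proof of this Fact at all: it is imported from the cited literature (Fabian et al.), so there is no internal argument to compare against. Your outline reconstructs the standard textbook proof from exactly those sources, and its architecture is sound: $(e_n)$ is a monotone Schauder basis of $\J$ (tails have small square variation, since otherwise concatenating disjoint far-out selections of variation $\geq\varepsilon$ would make $\|x\|_{\J}$ infinite); shrinkingness gives the basis $(e_n^*,e_n)$ of $\J^*$, hence separability of $\J^*$ and the Asplund property; a shrinking basis identifies $\J^{**}$ with the sequences $(\xi_n)$ having $\sup_k\|\sum_{i\leq k}\xi_i e_i\|_{\J}<\infty$; and your key step --- bounded square variation forces $(\xi_n)$ to converge, since $\limsup\neq\liminf$ would yield an alternating selection with $\geq(k-1)\delta^2/9$ variation --- is exactly right, and correctly yields $\J^{**}=\J\oplus\spand\{e\}$ (note $e\in\J^{**}$ because $\|\sum_{i\leq n}e_i\|_{\J}=1$ for all $n$, and $e\notin\J$ since $e\notin c_0$).

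The one step whose justification as written would not survive scrutiny is the shrinking property. The heuristic ``a normalized block sequence supported on late coordinates realizes only finitely many jumps at a time and so cannot carry a fixed positive amount of the functional's mass arbitrarily far out'' is not the operative mechanism: in $\ell^1$, whose unit vector basis is \emph{not} shrinking, blocks equally touch only finitely many coordinates at a time, yet the functional $(1,1,\ldots)\in\ell^\infty$ carries mass $1$ on every nonnegative normalized block. What actually makes the James basis shrinking is a quantitative growth estimate: if $u_1,\ldots,u_m$ are successive normalized blocks, then splitting any index selection at the block junctions and using $(a-b)^2\leq 2a^2+2b^2$ gives $\|\sum_{j=1}^{m}u_j\|_{\J}\leq C\sqrt{m}$; so a functional $x^*$ with $\langle x^*,u_j\rangle\geq\varepsilon$ for all $j$ would force $\|x^*\|\geq \varepsilon m/(C\sqrt{m})\to\infty$, a contradiction. (In $\ell^1$ the analogous block sums grow like $m$, which is precisely why the argument fails there.) You should state and prove this $\sqrt{m}$ estimate explicitly; with it in place, the remainder of your plan goes through and recovers the Fact.
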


\begin{corollary}[James space]\label{DSChE:1} Let  $X$ be  the James space, $\J$.
Let
$e_{n}$ be defined as in Fact~\ref{fact:J}, and let
 $A$ be defined as in Example~\ref{FPEX:B1}.
Then $A$ is a maximally monotone and skew operator that is neither
of type (BR) nor unique and so $A$ is not of type (D).
 Hence, every Banach space that contains an isomorphic copy
of $\textbf{J}$ is not of type (D).
 \end{corollary}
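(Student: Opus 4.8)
The statement is an immediate specialization of Example~\ref{FPEX:B1} to $X=\J$, so the plan is simply to verify that the James space, equipped with its standard basis $(e_n,e_n^*)_{n\in\NN}$ and the distinguished bidual vector $e=(1,1,\ldots)$, meets the three standing hypotheses of that example; everything else is then inherited verbatim. First I would record, straight from Fact~\ref{fact:J}, that $(e_n,e_n^*)_{n\in\NN}$ is a (shrinking) Schauder basis of $\J$, which is the first hypothesis.

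Next I would check the growth condition $\liminf_{n}\|e_n\|>0$. Since $e_n=(0,\ldots,0,1,0,\ldots)$ has a single nonzero coordinate, choosing in the James norm a triple of indices straddling the $n$th position produces the two unit jumps $0\to 1$ and $1\to 0$, so for $n\geq 2$
\begin{align*}
\|e_n\|=\big(1^2+1^2\big)^{\frac12}=\sqrt2,
\end{align*}
whence $\liminf_n\|e_n\|=\sqrt2>0$.

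The substantive point---and the step I expect to carry the real weight---is the weak$^*$ convergence \eqref{eq:lim}, namely $\sum_{i=1}^{n}e_i\weakstarly e$ in $\J^{**}$. Here I would use the concrete sequence model of $\J$: the partial sums $\sum_{i=1}^{n}e_i=(1,\ldots,1,0,\ldots)$ are monotone $1$--then--$0$ sequences, hence each has James norm $1$, so the net is bounded. For weak$^*$ convergence it then suffices to test against a basis of $\J^*$; since the basis is shrinking, Fact~\ref{SCh:1}\ref{SCh:1c} shows $(e_n^*,e_n)_{n\in\NN}$ is a Schauder basis of $\J^*$, so every $x^*\in\J^*$ expands in norm as $x^*=\sum_m\langle x^*,e_m\rangle e_m^*$. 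Reading $e$ coordinatewise gives $\langle e,e_m^*\rangle=1$ for every $m$, and therefore
\begin{align*}
\Big\langle \sum_{i=1}^{n}e_i,x^*\Big\rangle=\sum_{i=1}^{n}\langle x^*,e_i\rangle
\longrightarrow \sum_{m}\langle x^*,e_m\rangle\langle e,e_m^*\rangle=\langle e,x^*\rangle.
\end{align*}
This is the one place where the special construction of $\J$ (that $e\notin\J$ yet $e\in\J^{**}$, as in Fact~\ref{fact:J}) is genuinely used, and it is where I would take the most care.

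With the three hypotheses in hand, I would invoke Example~\ref{FPEX:B1}: part~(i) gives that $A$ is maximally monotone and skew, part~(ii) that it is not of type (BR), part~(iii) that it is not of type (D), and part~(iv) that it is not unique. Finally, the closing assertion---that every Banach space containing an isomorphic copy of $\J$ fails to be of type (D)---is precisely part~(v), obtained by combining (i) and (iii) with the subspace transfer principle of Corollary~\ref{Simonco:2}.
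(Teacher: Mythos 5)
Your proposal is correct, and all of its ingredients check out against the paper's definitions: with the norm as defined, indeed $\|e_n\|=\sqrt{2}$ for $n\geq 2$ (and $\|e_1\|=1$), so $\liminf_n\|e_n\|>0$; the partial sums $\sum_{i=1}^{n}e_i=(1,\ldots,1,0,\ldots)$ have James norm $1$; and the final assembly via parts (i)--(v) of Example~\ref{FPEX:B1} and Corollary~\ref{Simonco:2} is exactly what the statement requires. Where you genuinely diverge from the paper is at the key step \eqref{eq:lim}. The paper's proof of Corollary~\ref{DSChE:1} establishes $\sum_{i=1}^{n}e_i\weakstarly e$ by compactness: the partial sums lie in $B_{\J^{**}}$, so by the Banach--Alaoglu theorem together with the cited results of Fabian et al.\ (weak$^*$ metrizability of bounded sets over the separable predual $\J^*$), the sequence has weak$^*$ cluster points, and $e$ is identified as the \emph{unique} such cluster point, forcing convergence of the whole sequence. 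You instead compute directly: since the basis is shrinking (Fact~\ref{fact:J}), Fact~\ref{SCh:1}\ref{SCh:1c} makes $(e_m^*,e_m)_{m\in\NN}$ a Schauder basis of $\J^*$, every $x^*$ expands in norm as $\sum_m\langle x^*,e_m\rangle e_m^*$, and applying the norm-continuous functional $e\in\J^{**}$ (with $\langle e,e_m^*\rangle=1$, which is precisely the coordinatewise reading of $e$ licensed by Fact~\ref{fact:J}) shows that $\langle\sum_{i=1}^{n}e_i,x^*\rangle$ is exactly the $n$th partial sum of a series converging to $\langle e,x^*\rangle$. This is elementary and self-contained --- it avoids Alaoglu and metrizability entirely, and in fact once you apply $e$ to the norm-convergent dual expansion, even the boundedness of the partial sums becomes logically dispensable (though it supports the standard bounded-net-plus-dense-test formulation you also sketch). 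What the paper's route buys is brevity on the page and no explicit series manipulation, at the cost of outsourcing the uniqueness-of-cluster-point argument to cited propositions; your route makes transparent that the shrinking property of the James basis is the real engine, a dependence the paper's proof leaves implicit inside its references.
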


\allowdisplaybreaks
\begin{proof}  To apply
Example~\ref{FPEX:B1} we need only verify that \eqref{eq:lim} holds.
To see this is so, we note that
$\big(\sum_{i=1}^{n}e_i\big)_{n\in\NN}$ lies in $B_{\J^{**}}$ ---
directly from the definition of the norm in $\J$. Now  by the
Banach-Alaoglu theorem and \cite[Proposition 3.103, page~128]{FabianHH2} or \cite[Proposition 3.24,
page~72]{FabianHH}, we have the vector $e=(1,1,\ldots,1,\ldots)$ is the
unique $w^*$ limit of
$\big(\sum_{i=1}^{n}e_i\big)_{n\in\NN}$.
\end{proof}

An easier version of the same argument leads to a recovery of part
of Example \ref{FPEX:1}:

\begin{corollary}[$c_0$]\label{DSChE:2} Let  $X=c_0$. Let
$e_{n}$ be defined as in Fact~\ref{fact:J}
and $e:=(1,1,\ldots,1,\ldots)$.
Let
 $A$ be defined as in Example~\ref{FPEX:B1} (thus $A=T_{e}$ in Example~\ref{FPEX:1}\ref{BCCE:A6}).
Then $A$ is a maximally monotone and skew operator that is neither
of type (BR) nor unique and so $A$ is not of type (D). Hence, every
Banach space that contains an isomorphic copy of $c_0$ is not
of type (D).
 \end{corollary}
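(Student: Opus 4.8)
The plan is to reduce everything to Example~\ref{FPEX:B1}, exactly as was done for the James space in Corollary~\ref{DSChE:1}. Since the operator $A$ here is by construction the operator of Example~\ref{FPEX:B1} associated with the standard unit-vector basis $(e_n,e_n^*)_{n\in\NN}$ of $c_0$, all I need to do is verify the two standing hypotheses of that example --- the weak$^*$ convergence \eqref{eq:lim} of the partial sums $\sum_{i=1}^n e_i$ to some $e\in X^{**}$, and $\liminf\|e_n\|>0$ --- after which all five conclusions (maximal monotonicity and skewness, failure of type (BR), failure of type (D), non-uniqueness, and the hereditary statement for superspaces) are immediate.

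The key and essentially only verification is \eqref{eq:lim}. Here $X=c_0$, so $X^*=\ell^1$ and $X^{**}=\ell^\infty$, and the relevant weak$^*$ topology on $\ell^\infty=(\ell^1)^*$ is $\sigma(\ell^\infty,\ell^1)$. First I would note that the partial sum $\sum_{i=1}^n e_i$ is the vector $(1,\dots,1,0,0,\dots)$ with $n$ leading ones, and that for any $y^*=(y^*_i)_{i\in\NN}\in\ell^1$ one has $\langle \sum_{i=1}^n e_i,y^*\rangle=\sum_{i=1}^n y^*_i\to\sum_{i\geq 1}y^*_i=\langle e,y^*\rangle$, simply because an $\ell^1$ series converges absolutely. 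Hence $\sum_{i=1}^n e_i\weakstarly e$ with $e=(1,1,\dots)\in\ell^\infty\setminus c_0$, which is precisely \eqref{eq:lim}. This is the ``easier version'' of the argument used in Corollary~\ref{DSChE:1}: there one must invoke Banach--Alaoglu weak$^*$ compactness and a uniqueness-of-limit argument to name the limit of $\big(\sum_{i=1}^n e_i\big)_{n\in\NN}$ inside $\J^{**}$, whereas for $c_0$ the limit is identified by hand from absolute summability.

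The remaining hypothesis is trivial: $\|e_n\|_\infty=1$ for every $n$, so $\liminf\|e_n\|=1>0$, which also lets the internally used Lemma~\ref{LemJM:1} apply. I would then record that this $A$ coincides with $T_e$ of Example~\ref{FPEX:1}\ref{BCCE:A6}: since $\langle e_i,y^*\rangle=y^*_i$ is the $i$th coordinate of $y^*\in\ell^1$, the first component of a graph point becomes $\sum_n\big(-\sum_{i>n}y^*_i+\sum_{i<n}y^*_i\big)e_n=-G(y^*)$, where $G$ is Gossez's operator, and $\{e\}_\bot=\{y^*\in\ell^1\mid\sum_i y^*_i=0\}$, so that $\gra A=\{(-G(y^*),y^*)\mid y^*\in\ell^1,\ \langle y^*,e\rangle=0\}=\gra T_e$. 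With both hypotheses in hand, parts (i)--(iv) of Example~\ref{FPEX:B1} supply the asserted properties of $A$, and part (v) yields that every Banach space containing an isomorphic copy of $c_0$ fails to be of type (D).

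I do not expect a genuine obstacle here: the entire content lies in correctly interpreting the weak$^*$ topology in \eqref{eq:lim} and seeing that it collapses to the convergence of $\ell^1$ tails. The one point worth stating explicitly is that $e=(1,1,\dots)$ genuinely lies outside $c_0$ (it is not a null sequence), so that the non-degeneracy implicit in the construction --- the separation of $X$ from $\spand\{e\}$ inside $X^{**}$, on which the failure of type (D) rests --- is in force.
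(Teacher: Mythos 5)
Your proposal is correct and follows exactly the route the paper intends: the paper proves Corollary~\ref{DSChE:2} by noting it is ``an easier version'' of the argument for Corollary~\ref{DSChE:1}, i.e., one verifies \eqref{eq:lim} and $\liminf\|e_n\|>0$ for the unit-vector basis of $c_0$ and then invokes Example~\ref{FPEX:B1}. Your direct identification of the weak$^*$ limit $e=(1,1,\ldots)$ via absolute summability in $\ell^1$ (rather than the Banach--Alaoglu argument needed for $\J$), together with the observation that $A=T_e$ via Gossez's operator $G$, is precisely the simplification the paper alludes to.
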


We finish our set of core examples by dealing with the dual space
$\J^*$.

\begin{example}[Shrinking Schauder basis]\label{FPEX:C1}
Let $(e_n,e_n^*)_{n\in\NN}$ in $X\times X^*$ be a shrinking Schauder
basis of $X$. Assume that $\sum_{i=1}^{n}e_i\weakstarly e$ for some
$e \in X^{**}$.
 Let $A:X^{*}\rightrightarrows X^{**}$ be defined by
\begin{align}\label{A:eq}
\gra A=\bigg\{(y^*,y^{**})\in X^*\times X^{**}\mid
 \sum^{k}_{n=1}\big(\sum_{i>n}\langle e_i,y^*\rangle-
\sum_{i<n}\langle e_i,y^*\rangle\big)e_n\weakstarly y^{**} \bigg\}.
\end{align}
Then  $A$ is a  maximally monotone and linear skew operator, which
is of type (BR).

In particular,  let $(e_{n})_{n\in\NN}$ and $e$
 be defined as in Fact~\ref{fact:J}. Then $A+\langle \cdot,e\rangle e$ is a maximally
 monotone operator that is neither of type (D) nor unique;
and every Banach space containing a copy of $\textbf{J}^*$ is not of
type (D).

\begin{proof}
 Again, we first show $A$ is skew.
 Let $(y^*,y^{**})\in\gra A$. Then  $$
\displaystyle\sum^{k}_{n=1}\big(\sum_{i>n}\langle e_i,y^*\rangle-
\sum_{i<n}\langle e_i,y^*\rangle\big)e_n\weakstarly y^{**}.$$  By
the assumption that
 $\sum_{i=1}^{n}e_i\weakstarly e\in X^{**}$, we have
 \begin{align}
 s:=\sum_{i\geq 1}\langle e_i,y^*\rangle=\langle e,y^*\rangle.\label{EXJMD:1}
 \end{align}
Thus,
\allowdisplaybreaks
\begin{align}
\langle y^{**}, y^*\rangle
&=\lim_{k} \langle\sum^{k}_{n=1}\big(\sum_{i>n}\langle e_i,y^*\rangle-
\sum_{i<n}\langle e_i,y^*\rangle\big) e_n,y^{*}\rangle\nonumber\\
&=\lim_{k} \sum^{k}_{n=1}\big(\sum_{i>n}\langle e_i,y^*\rangle-
\sum_{i<n}\langle e_i,y^*\rangle\big)\langle e_n,y^*\rangle\nonumber\\
&=\lim_{k} \sum^{k}_{n=1}\big(\sum_{i\geq n+1}\langle e_i,y^*\rangle+
\sum_{i\geq n}\langle e_i,y^*\rangle-s\big)\langle e_n,y^*\rangle
\quad\text{(by \eqref{EXJMD:1})}\nonumber\\
& =-s\lim_{k} \sum^{k}_{n=1}\langle e_n,y^*\rangle+\lim_{k}
 \sum^{k}_{n=1}\big(\sum_{i\geq n+1}\langle e_i,y^*\rangle+
\sum_{i\geq n}\langle e_i,y^*\rangle\big)\langle e_n,y^*\rangle\nonumber\\
&=-s^2+(2s^2-s^2)=0\quad\text{(as in the proof of
\eqref{EXJM:b1})}.\nonumber
\end{align}
Hence $A$ is skew.

Now we confirm maximality.  Let $(x^*,x^{**})\in X^*\times X^{**}$ be
monotonically related to $\gra A$. By Fact~\ref{PF:1}, we have
\begin{align}
&\langle y^*,x^{**}\rangle+\langle x^*,y^{**}\rangle=0,
 \quad \forall (y^*,y^{**})\in\gra A.\label{ELD:3}\end{align}
Fix $n\in\NN$ and set $y^*:=e_n^*$. Then
$\sum^{k}_{j=1}\big(\sum_{i>j}\langle e_i,y^*\rangle-
\sum_{i<j}\langle e_i,y^*\rangle\big)e_j=
\sum^{n-1}_{j=1}e_j-
\sum^{k}_{j=n+1}e_j$.
By the assumption that
$\sum_{i=1}^{k}e_i\weakstarly e$, we have
\begin{align*}\sum^{n-1}_{j=1}e_j-
\sum^{k}_{j=n+1}e_j\weakstarly 2\sum^{n-1}_{j=1}e_j+e_n-e.
\end{align*}
Hence
$(e_n^*,2\sum^{n-1}_{j=1}e_j+e_n-e)\in\gra A$.
Then by \eqref{ELD:3},
\begin{align*}
\langle x^{**},e_n^*\rangle +2\sum^{n-1}_{j=1}\langle x^*,e_j\rangle
+\langle x^*,e_n\rangle-\langle x^*,e\rangle=0.
\end{align*}
Since $\sum_{j\geq1}\langle x^*, e_j\rangle=\langle x^*,e\rangle$, we have
\begin{align}
\langle x^{**},e_n^*\rangle&=-2\sum^{n-1}_{j=1}\langle x^*,e_j\rangle
-\langle x^*,e_n\rangle+\langle x^*,e\rangle
=\sum_{j>n}\langle x^*,e_j\rangle-\sum_{j<n}\langle x^*,e_j\rangle.
\label{ELD:4}
\end{align}
By Fact~\ref{SCh:1}\ref{SCh:1b}\&\ref{SCh:1c},
$\sum^{k}_{n=1}\big(\sum_{j>n}\langle x^*,e_j\rangle
-\sum_{j<n}\langle x^*,e_j\rangle\big) e_n\weakstarly x^{**}$.
Hence $(x^*,x^{**})\in\gra A$. Thus, $A$ is maximally monotone.

We next show that $A$ is of type (BR). Let
$(z^*,z^{**})\in\gra(-A^*)\cap X^*\times X^{**}$. Much as in the proof above starting at
 \eqref{ELD:3}, we have $(z^*,z^{**})\in\gra A$. Thus, $\gra
(-A^*)\cap X\times X^{**} \subseteq\gra A$. Then by
Lemma~\ref{MSCBR:L1}, $A$ is of type (BR).

We turn to the particularization. By Fact \ref{fact:J},
$(e_n, e_n^*)_{n\in\NN}$ is a shrinking Schauder basis for $\textbf{J}$. By
Fact~\ref{lisum:1} since $A$ is maximal, $T=A+\langle \cdot,e
\rangle e= A+\partial \tfrac{1}{2}\langle \cdot, e\rangle^2$ is
maximally monotone. Since $A$ is skew, we have \begin{align} \langle
x^*,x^{**}\rangle=\langle x^*, e\rangle^2, \quad \forall
(x^*,x^{**})\in\gra T.\label{EAJD:05}
\end{align}
Now we claim that
\begin{align}
e\notin\overline{\ran T}.\label{EAJD:5}
\end{align}
Let $(y^*,y^{**})$ in $\gra T$. Then
\begin{align}
&\sum_{j=1}^{k}\big(2\sum_{i>j}\langle e_i,y^*\rangle+
\langle e_j,y^*\rangle\big)e_j
\nonumber\\
&=\sum_{j=1}^{k}\big(\langle y^*,e\rangle +\sum_{i>j}\langle e_i,y^*\rangle-
\sum_{i<j}\langle e_i,y^*\rangle\big)e_j
\quad\text{(by $\sum_{i\geq 1}\langle e_j, y^*\rangle=\langle e,y^*\rangle$)}\nonumber\\
&=\langle y^*,e\rangle \sum_{j=1}^{k}e_j+\sum_{j=1}^{k}\big(\sum_{i>j}\langle e_i,y^*\rangle-
\sum_{i<j}\langle e_i,y^*\rangle\big)e_j\weakstarly y^{**}.\label{EJD:1}
\end{align}
Then by \eqref{EJD:1},
\begin{align}
\lim_{k}\langle y^{**},e^*_k\rangle
&=\lim_{k}\lim_{L}\langle \sum_{j=1}^{L}\big(2\sum_{i>j}\langle e_i,y^*\rangle+
\langle e_j,y^*\rangle\big)e_j,e^*_k\rangle\nonumber\\
&=\lim_{k}\big(2\sum_{i>k}\langle e_i,y^*\rangle+
\langle e_k,y^*\rangle\big)
\nonumber\\
&=0\quad\text{(by $\sum_{k\geq 1}\langle e_k,y^*\rangle=\langle e,y^*\rangle$)}.
\end{align}
Then by Fact~\ref{fact:J}, $y^{**}\in \textbf{J}$ and hence $\ran T\subseteq \textbf{J}$.
Thus
\begin{align}\overline{\ran T}\subseteq \textbf{J}.\label{EAJD:6}\end{align}
Since $\langle e, e^*_k\rangle=1,\ \forall k\in\NN$, then by
Lemma~\ref{LemJM:1}, $e\notin \textbf{J}$. Then by \eqref{EAJD:6},
we have  \eqref{EAJD:5} holds.
Combining \eqref{EAJD:05}, \eqref{EAJD:5} and Proposition~\ref{ProJon1},
 $T=A+\langle\cdot,e\rangle e$ is neither of type (D) nor unique.

 This suffices to finish the argument.
\end{proof}
\end{example}

\begin{remark}[$\ell^1$] A simpler version  of the previous result recovers the original
result that $\ell^1$ admits Gossez type operators. \qede\end{remark}

\section{Conclusion}
We have provided various tools for the further
construction of pathological maximally monotone operators and
related Fitzpatrick functions.  In particular, we have shown ---
building on the work of Gossez, Phelps, Simons, Svaiter, Bueno and others,
and our own previous work --- that  every Banach space which
contains an isomorphic copy of either the James space $\J$ or its
dual $\J^*$,  or  $c_0$ or its dual  $\ell^1$,
 admits an operator which is not of type (D).  We observe that the type (D) property is preserved by
direct sums and subspaces.  Since every separable space is isometric to a
quotient  space of $\ell^1$ \cite[Theorem~5.1, page~237]{FabianHH2}
or \cite[Theorem~5.9, page~140]{FabianHH}, it is not preserved
by quotients.

\begin{example}[Summary] We list some of the salient spaces covered by our work:
\begin{enumerate}
\item Separable Asplund spaces: both $\J$ and $c_0$ afford examples.
\item Separable spaces whose dual is nonseparable and contain
$\ell^1$: include  $\L^1([0,1])$, $C([0,1])$ and its superspace
$L^\infty([0,1])$. \item Separable spaces whose dual is nonseparable
but does not contain a copy of  $\ell^1$: these include the James
tree space $\textbf{JT}$ \cite[page~233]{FabianHH2} or \cite[page~199]{FabianHH} as it contains
many copies of $\J$ (and of $\ell^2(\NN)$).
\end{enumerate}
One remaining potential type (D) space is  Gowers' space
\cite{Gowers} which is a non-reflexive Banach space containing
neither $c_0$, $\ell^1$ or any reflexive subspace.\qede
\end{example}

As we saw, the maximally monotone operators in our examples --- with
the exception of the Gossez operator --- that are not of type (D)
are actually  not unique. {This raises the question of how in
generality to construct maximally monotone linear relations that are
not of type (D) but that are unique.

\subsection{Graphic of classes of maximally monotone operators}

We capture much of the current state of knowledge in the following
diagram in which  the notation below is used.
\begin{align*}
``*" &\ \text{refers to  skew operators such as $T$ in
Theorem~\ref{PBABD:2},
 $T_{\alpha}$ in Example~\ref{FPEX:1},}\\
 &\quad\text{
$A$ in Example~\ref{FPEX:B1}, $A$ in Corollary~\ref{DSChE:1}, and $A$ in Corollary~\ref{DSChE:2}.} \\
``**"&\  \text{refers to the operators such as $A\& T$ in
Theorem~\ref{PBABD:2},
 $A_{\alpha}\& T_{\alpha}$ in Example~\ref{FPEX:1},}\\
 &\quad\text{
$A$ in Example~\ref{FPEX:B1}, $A$ in Corollary~\ref{DSChE:1}, $A$ in Corollary~\ref{DSChE:2},}\\
&\quad\text{ and
$A+\langle\cdot,e\rangle e$ in Example~\ref{FPEX:C1}.}\\
``***"&\  \text{denotes maximally monotone and unique operators with
non affine graphs. }
\end{align*}
We let (ANA), (FP) and  (FPV) respectively   denote the other
monotone operator classes
 ``almost negative alignment",
  ``Fitzpatrick-Phelps" and ``Fitzpatrick-Phelps-Veronas". Then by
  \cite{Si2,BorVan, Bor3,BBWY2,Si,MarSva,Si5, Yao2},
   we have the following relationships.

\begin{figure}[h]

\begin{tikzpicture}

        \node at (0,6) [rectangle,draw] {type (FPV)};
      \node at (0,3) [rectangle,draw] {***};
         \node at (3,3) [rectangle,draw]  {Gossez operator};
      \node at (-6,3) [rectangle,draw] {*};
   \node at (6,3) [rectangle,draw] {**};
       \node at (6,0) [rectangle,draw] {uniqueness};
     \node at (3,0) [rectangle,draw]  {type (ED)};
    \node at (0,0) [rectangle,draw]  {type (NI)};

    \node  at (-3,0) [rectangle,draw]  {type (FP)};
    \node  at (-2.7,1.5) [rectangle,draw]  {type (ANA)};
     \node  at (-6,1.5) [rectangle,draw]  {type (BR)};
          \node at (-6,0) [rectangle,draw]  {type (D)};
    \draw [->,thick](3,2.6)--(3,0.4);
    \draw [-,thick](2.9,1.4)--(3.1,1.6);
     \draw [->,thick](-6,3.4)--(-0.4,5.6);
        \draw [->,thick](-6,2.6)--(-6,1.9);
        \draw [-,thick](-6.07,2.15)--(-5.95, 2.35);
              \draw [->,thick](-3,0.4)--(-3,1.1);
              \draw [->,thick](-6,0.4)--(-6,1.1);
        \draw [<-,thick](-0.2,5.6)--(-5.6,0.4);
     \draw [->,thick](6,3.4)--(0.1,5.6);
      \draw [->,thick](3,3.4)--(-0.1,5.6);
   \draw [->,thick](6,2.6)--(6,0.4);
   \draw [-,thick](6.1,1.6)--(5.9,1.4);
     \draw [->,thick](0,2.6)--(0,0.4);
     \draw [->,thick](3,2.6)--(5.9,0.4);
    \draw [->,thick](4.1,0)--(4.8,0);
       \draw [<->,thick](1.1,0)--(1.9,0);
          \draw [<->,thick](-1.9,0)--(-1.1,0);
             \draw [<->,thick](-5,0)--(-4.1,0);
\end{tikzpicture}
\end{figure}
\allowdisplaybreaks

The following four questions are left open.
\begin{enumerate}
\item Is every maximally monotone operator  necessarily of type (FPV)?
\item Is every maximally monotone operator necessarily of type (ANA)?
\item Is every maximally monotone linear relation necessarily of type (ANA)?
\item Is every maximally monotone operator of type (BR) necessarily of type (ANA)?
\end{enumerate}

The first of these is especially important, being closely related to
the sum theorem in general Banach space (see \cite{Si2,BorVan,Bor3,Yao3}).

 \vfill

\paragraph{Acknowledgments.} Heinz Bauschke was partially supported by the
Natural Sciences and Engineering Research Council of Canada and by
the Canada Research Chair Program. Jonathan  Borwein was partially
supported by the Australian Research  Council.
Xianfu Wang was
partially supported by the Natural Sciences and Engineering Research
Council of Canada.

\newpage

\end{document}